\newtheorem {pro}{Proposition}[section]
\newtheorem {thm}[pro]{Theorem}
\newtheorem {cor}[pro]{Corollary}
\newtheorem{lem}[pro]{Lemma}
\theoremstyle{definition}
 \newtheorem {rem}[pro]{Remark}
\newtheorem {dfn}[pro]{Definition}
\newtheorem {defs}[pro]{Definitions}
\newtheorem {exa}[pro]{Example}
\newcommand{\Q} {\mathbb{Q}}
\newcommand{\pa}{\partial}
\newcommand{\ws}{\overline{\Omega}}
\newcommand{\ko}{ k(0_+)}
\newcommand{\oi}{ \Omega_\infty ^j (X)}
\newcommand{\R} {\mathbb{R}}
\newcommand{\N} {\mathbb{N}}
\newcommand{\C} {\mathcal{C}}
\newcommand{\hn}{\mathcal{H}}
\newcommand{\db}{\overline{d}}
\newcommand{\St}{\mathcal{S}}
\newcommand{\xb}{X_{reg}}
\newcommand{\dbb}{\mathbb{D}}
\newcommand{\D}{\mathcal{D}}
\newcommand{\I}{\mathcal{I}}
\newcommand{\F}{\mathcal{F}}
\newcommand{\E}{\mathcal{E}}
\newcommand{\la}{\mathcal{L}}
\newcommand{\ep}{\varepsilon}
\newcommand{\K}{\mathcal{K}}
\title {{\bf   $L^1$ cohomology of bounded subanalytic manifolds}}
\author{Guillaume Valette}
\address
{Instytut Matematyczny PAN, ul. \'Sw. Tomasza 30, 31-027 Krak\'ow,
Poland} \email{gvalette@impan.pl}
\keywords{$L^p$ differential forms, de Rham theorem, noncompact manifolds,  subanalytic sets, Lipschitz maps, Lefschetz-Poincar\'e duality, intersection homology}
\thanks{}
\subjclass{14F40, 58A10, 55N33, 57P10, 32B20}
\begin{document}

\maketitle

\begin{abstract}
We prove some de Rham theorems on bounded subanalytic submanifolds
of $\R^n$ (not necessarily compact).  We show that  the $L^1$
cohomology of such a submanifold is isomorphic to its singular
homology.   In the case where the closure of the underlying manifold has only  isolated
singularities this implies that the $L^1$ cohomology is  Poincar\'e
dual to $L^\infty$ cohomology (in dimension $j <m-1$). In general,
  Poincar\'e
duality  is related to the so-called $L^1$ Stokes' Property. For oriented
manifolds, we show that the $L^1$ Stokes' property holds if and only
if integration realizes a nondegenerate pairing between $L^1$ and
$L^\infty$ forms. This is the counterpart of a theorem proved by
Cheeger on $L^2$ forms.
\end{abstract}

\section{introduction}
Given a Riemannian manifold $M$,  the $L^1$ forms are the
differential forms $\omega$ on $M$ satisfying
\begin{equation}\label{eq_l1_condition}\int_M |\omega|\, d vol_M
<\infty,\end{equation} where $|\omega|$ is the norm of the
differential form $\omega$ derived from the Riemannian metric of
$M$. The smooth $L^1$ forms having an $L^1$ exterior derivative constitute  a
cochain complex which gives rise to cohomology groups, {\bf the
$L^1$ cohomology groups of $M$}.

 In this paper we first prove a de Rham theorem for the $L^1$
cohomology:

\begin{thm}\label{thm_intro}Let $M$ be
a bounded subanalytic submanifold of $\R^n$. The $L^1$ cohomology of $M$ is
isomorphic to its singular cohomology.
\end{thm}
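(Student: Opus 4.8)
\medskip
\noindent\textbf{Proof strategy.}
Set $X:=\overline M$, a compact subanalytic subset of $\R^n$ in which $M$ is open (being a submanifold, $M$ is locally closed) and dense; write $j\colon M\hookrightarrow X$ for the inclusion and $\Sigma:=X\setminus M$ for the (closed, subanalytic) frontier. Since $H^*(M;\R)=\mathbb H^*(X;Rj_*\underline{\R}_M)$, the plan is to realize the $L^1$ de~Rham complex of $M$ as the complex of global sections of a complex of fine sheaves on $X$ that is quasi-isomorphic to $Rj_*\underline{\R}_M$. For an open $U\subseteq X$ let $\mathcal L^q(U)$ be the space of smooth $q$-forms $\omega$ on $U\cap M$ such that $\omega$ and $d\omega$ are $L^1$ in a neighbourhood (in $M$) of each point of $U$, and let $\mathcal L^q$ be the associated sheaf. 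The point of passing to $X$ is that, because $M$ is \emph{bounded}, ``being $L^1$ near $x$'' is a genuine germ condition at $x\in X$; conversely, since $X$ is compact a form that is $L^1$ near every point of $X$ is $L^1$ on all of $M$, so $\Gamma(X,\mathcal L^\bullet)$ is precisely the complex of smooth $L^1$ forms on $M$ with $L^1$ differential, and $H^*(\Gamma(X,\mathcal L^\bullet))=H^*_{L^1}(M)$.

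Next I would check that $\mathcal L^\bullet$ is a complex of fine sheaves. Given a finite open cover $\{U_i\}$ of $X$, pick smooth functions $\psi_i$ on $\R^n$, each compactly supported in an open set meeting $X$ inside $U_i$, with $\sum_i\psi_i\equiv 1$ on a neighbourhood of $X$. Multiplication by $\psi_i|_M$ preserves $\mathcal L^q$: $\psi_i$ and its ambient gradient are bounded and the Riemannian metric induced on $M$ is dominated by the ambient one, so $|d\psi_i\wedge\omega|\le C\,|\omega|$ and $|\psi_i\,d\omega|\le|d\omega|$, whence $\psi_i\omega$ and $d(\psi_i\omega)$ remain $L^1$ near each point. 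Thus $\mathcal L^\bullet$ is a complex of soft, hence $\Gamma$-acyclic, sheaves on the compact space $X$, so $\mathbb H^*(X;\mathcal L^\bullet)=H^*(\Gamma(X,\mathcal L^\bullet))=H^*_{L^1}(M)$. It remains to prove the Poincaré lemma: the de~Rham map $\mathcal L^\bullet\to Rj_*\underline{\R}_M$, sending a germ of a closed $L^1$ form on $U\cap M$ to its de~Rham class, is a quasi-isomorphism; equivalently, as $U$ runs over a cofinal system of open neighbourhoods of each $x\in X$ one has $H^q(\mathcal L^\bullet(U))\cong H^q(U\cap M;\R)$, compatibly with restriction.

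For $x\in M$ one may take $U\cap M$ to be a Euclidean ball, and this is the classical Poincaré lemma, the radial homotopy operator obviously keeping forms $L^1$ on a bounded ball. The real work is at $x\in\Sigma$, and this is the main obstacle. Using the local conic structure of subanalytic sets, shrink $U$ so that $(U,U\cap M)$ is subanalytically homeomorphic to a cone $(c(L),c(L)\cap M)$ with vertex $x$ over its link $L$; then $U\cap M$ deformation retracts onto $L\cap M$, so $H^*(U\cap M;\R)=H^*(L\cap M;\R)$, and one must match this with the $L^1$ cohomology of $c(L)\cap M$, by induction on $\dim X$. The difficulty is that the metric induced on $M$ near a subanalytic singularity need \emph{not} be bi-Lipschitz to a metric cone (horn-type singularities occur), so no naive cone/product formula is available. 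The remedy is to decompose a punctured neighbourhood into finitely many ``pancakes'' / regular cells, each bi-Lipschitz to an explicit model $(0,\ep)\times(\text{cell})$ carrying a controlled polynomial radial profile (here one uses that the inner and outer metrics of a bounded subanalytic set are bi-Lipschitz equivalent); on each model an explicit radial cone operator gives a homotopy, and a direct scaling estimate of the type $\int_0^{\ep}t^{a}\,dt<\infty$ — valid in all the degrees that occur, precisely because the $L^1$ integrability condition is so permissive below the top dimension — shows it sends $L^1$ forms to $L^1$ forms with $L^1$ differential. A Mayer--Vietoris argument over these cells, available since $\mathcal L^\bullet$ is fine, then glues the local homotopies and closes the induction. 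Assembling the pieces gives $H^*_{L^1}(M)=\mathbb H^*(X;\mathcal L^\bullet)\cong\mathbb H^*(X;Rj_*\underline{\R}_M)=H^*(M;\R)$, and tracking the maps shows the isomorphism is the natural integration/de~Rham map onto singular cohomology.
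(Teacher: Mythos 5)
Your global architecture coincides with the paper's: sheafify the locally‑$L^1$ de Rham complex over $X=cl(M)$, use partitions of unity to see the sheaves are soft (hence acyclic, so that by compactness of $X$ the hypercohomology computes $H^\bullet_{(1)}(M)$), and reduce everything to a local Poincar\'e lemma at points of $\delta M$, proved by induction on dimension via the link, with integration on simplices furnishing the comparison map. The gap is concentrated in the one step you yourself identify as ``the real work''. First, the parenthetical assertion that the inner and outer metrics of a bounded subanalytic set are bi-Lipschitz equivalent is false in general (take the complement in the unit disk of the cuspidal region $\{0\leq y\leq x^2\}$: two points facing each other across the cusp at abscissa $x$ have outer distance $O(x^2)$ but inner distance of order $x$). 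It is true on each pancake, but the pancakes are then glued by maps that are not quasi-isometries near the vertex, so a cell-by-cell homotopy does not assemble for free. Second, and more seriously, the $L^1$ estimate for the radial cone operator is not a scalar computation of the form $\int_0^{\varepsilon}t^a\,dt<\infty$. Writing $h^*\omega=\omega_1+dt\wedge\omega_2$ and $\K\omega=\int\omega_2\,ds$ and decomposing in a coframe adapted to the link, the Jacobian contributes the product of \emph{all} radial profiles $\varphi_1\cdots\varphi_{m-1}$, the component of $\K\omega$ indexed by a multi-index $I$ loses only the factors $\varphi_I$, and the exchange of the $s$- and $t$-integrations closes only because each complementary profile satisfies $\varphi_{\hat I}(t;x)\leq\varphi_{\hat I}(s;x)$ for $t\leq s$, i.e.\ is \emph{monotone} in the radial variable. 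Producing one conic parametrization of the germ whose profiles are simultaneously positive, monotone decreasing to $0$, and aligned with a tame basis of $1$-forms on the link is exactly the content of Theorem \ref{thm Lipschitz conic structure}, and it occupies all of Section \ref{sect_lip}; it does not follow from the mere existence of a pancake decomposition with polynomial profiles on each piece.

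A second missing layer: the conic homeomorphism (or your bi-Lipschitz cell models) cannot be taken smooth, only subanalytic and Lipschitz, so the cone operator produces forms that are merely weakly differentiable (differentiable as currents) and does not visibly preserve your complex of \emph{smooth} $L^1$ forms. One must introduce the complex $\overline{\Omega}_{(1)}^\bullet$ of weakly differentiable $L^1$ forms, prove that subanalytic bi-Lipschitz maps pull back such forms to such forms (via horizontally $C^1$ stratifications and a stratified Stokes formula, Proposition \ref{prop_pullback_weakly smooth forms}), and then show by an exhaustion-and-convolution argument that the smooth and weakly smooth complexes have the same cohomology (Proposition \ref{pro l1 isom smooth}). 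Without this your homotopy operator does not act on the complex you defined.
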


\medskip

Here, $M$  is  equipped  with the Riemannian metric inherited from the ambient
space. In particular, the $L^1$ cohomology groups are finitely generated and are topological invariants of $M$.

 %

Forms with integrability conditions have been the focus of interest of many authors. Let us mention, among many others,  \cite{ bgm,c1, c2, c3, cgm,d, weber,hp,s1,s2,y}. First, integration is necessary to construct a pairing, crucial to define a Poincar\'e duality morphism which we study below.  Secondly, integrability conditions are of foremost importance in geometric analysis and differential equations on manifolds. 

 The $L^1$ condition if of metric nature. The metric geometry of singularities is much more challenging than  the study of their topology.    For instance it is well known that
subanalytic sets may be triangulated and hence are locally homeomorphic to
cones.   This property is very important for it reduces the study of
the topology of the singularity to the study of the topology link. The story is more complicated is one is interested in the
description of the aspect of singularities from the metric point of
view. A triangulation may not be achieved without affecting drastically the metric structure the singular set. The proof of this theorem thus requires new techniques for we do not restrict ourselves  to metrically conical singularities. 

In \cite{vlt,vpams}, the author introduced and constructed some triangulations enclosing enough information to determine al the metric properties of the singularities. The idea was to control the way the metric  is affected by the triangulation. The proof of Theorem \ref{thm_intro} requires an acurate description of
the metric type of subanalytic singularities.
 Using the techniques developped in \cite{vlt} \cite{vpams} and \cite{vlinfty} we show that
the conical structure of subanalytic singularities is not only topological but Lipschitz in a
very explicit sense that we shall define in this paper. This is achieved in
section \ref{sect_lip} of this paper and it  is the keystone of the
proof of Theorem \ref{thm_intro}. This section is of its own
interest, offering a nice new description of the Lipschitz geometry of
subanalytic sets. We improve the results of  \cite{vlinfty} where it
was  shown that every subanalytic germ may be retracted in a
Lipschitz way (see also  \cite{sv}).

\medskip

The history of $L^p$ forms on singular varieties began when J.
Cheeger started constructing a Hodge theory for  singular compact
varieties. He first computed in \cite{c1,c2} the $L^2$ cohomology
groups for varieties with metrically  conical singularities. It
turned out to be related to  intersection cohomology making of it a
good candidate to get a generalized Hodge theory on singular
varieties \cite{c3,c4, c5, cgm}.

Since Cheeger's work on $L^2$ forms, many authors have investigated
$L^p$ forms on singular varieties \cite{bgm,d, weber,hp,s1,s2,y} (among many others).
Nevertheless all of them focus on particular classes of Riemmanian
manifolds, with strong restrictions on the metric near the
singularities, like in the case of the so-called $f$-horns or
metrically conical singularities. In the present paper we only
assume that the given set is subanalytic.

Recently, the author of the present paper computed the
$L^\infty$ cohomology groups for any subanalytic pseudomanifold.
Let us recall the de Rham theorem achieved in \cite{vlinfty}.

\begin{thm}\label{thm_intro_linfty}\cite{vlinfty} Let $X$ be a compact subanalytic pseudomanifold. Then, for any $j$:
$$H_\infty ^j(X_{reg}) \simeq I^{t}H^j (X).$$
Furthermore, the isomorphism is induced by the natural map
provided by integration on allowable simplices.
\end{thm}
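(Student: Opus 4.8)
The statement is a de Rham theorem, and I would prove it by the sheaf-theoretic method: realize the complex of $L^\infty$ forms as a complex of fine sheaves on $X$ and identify it, in the derived category, with Deligne's intersection cohomology complex $IC_{\overline t}$ for the top perversity $\overline t(c)=c-2$. Concretely, for an open $U\subseteq X$ let $\Omega_\infty^\bullet(U)$ be the complex of smooth forms $\omega$ on $U\cap X_{reg}$ with $\omega$ and $d\omega$ bounded, and let $\mathcal{A}_\infty^\bullet$ be the associated complex of sheaves on $X$. Since one can produce subanalytic partitions of unity whose terms have bounded (indeed Lipschitz) differentials, each $\mathcal{A}_\infty^j$ is fine, and because $X$ is compact $\Gamma(X;\mathcal{A}_\infty^j)=\Omega_\infty^j(X)$; hence $H_\infty^j(X_{reg})=\mathbb{H}^j(X;\mathcal{A}_\infty^\bullet)$. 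Over $X_{reg}$ the complex $\mathcal{A}_\infty^\bullet$ is just the de Rham complex of bounded forms on a manifold, so its cohomology sheaf there is $\R$ in degree $0$, which is the normalization axiom.

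The heart of the matter, and the step I expect to be the main obstacle, is the stalk computation along a stratum $S$ of codimension $c$. Here I would invoke the Lipschitz conical structure of $X$ — the refinement of \cite{vlinfty} that Section \ref{sect_lip} of the paper makes explicit — to present a neighbourhood of a point $x\in S$, up to bi-Lipschitz equivalence, as a product of a Euclidean factor with an open subanalytic cone $c(L_x)$ over the link $L_x$. A homotopy and product-formula argument then reduces $\mathcal{H}^\bullet(\mathcal{A}_\infty^\bullet)_x$ to the $L^\infty$ cohomology of the cone, and the key lemma is a \emph{bounded Poincar\'e lemma on subanalytic cones}: integration of bounded forms along the cone lines (the explicit cone operator supplied by the conical structure) produces primitives, and the precise rate at which the inherited metric of $X$ degenerates along a cone line approaching $x$ — exactly the data recorded by the controlled triangulations of \cite{vlt,vpams} — forces these primitives to be bounded in degrees $\ge c-1$, while in degrees $j\le c-2$ the boundedness is obstructed by a class matching $I^{\overline t}H^j(L_x)$. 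This gives the support condition $\mathcal{H}^j(\mathcal{A}_\infty^\bullet)_x=0$ for $j>\overline t(c)$ together with the correct value below, and the cosupport/attachment condition follows from the same homotopy. The difficulty is entirely metric: unlike the $f$-horns or metrically conical singularities treated earlier, a general subanalytic cone has no normal form for its metric, so the estimates must be extracted from the explicit Lipschitz structure, and they must be sharp enough to pin down the truncation degree $\overline t(c)$ rather than merely some perversity.

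With the two conditions verified, the Goresky--MacPherson axiomatic characterization of intersection cohomology yields a canonical isomorphism $\mathcal{A}_\infty^\bullet\cong IC_{\overline t}^\bullet$ in $D^b(X)$, whence $H_\infty^j(X_{reg})=\mathbb{H}^j(X;\mathcal{A}_\infty^\bullet)=I^{\overline t}H^j(X)$. For the final assertion, I would check directly that integration is defined on $\overline t$-allowable simplices: such a $j$-simplex $\sigma$, taken subanalytic and Lipschitz, meets every codimension-$c$ stratum in dimension at most $j-2$, so $\sigma^{-1}(X\setminus X_{reg})$ is negligible in $\Delta^j$, and $\sigma^*\omega$ is a bounded form on a finite-measure domain, so $\int_\sigma\omega$ converges. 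The assignment $\omega\mapsto(\sigma\mapsto\int_\sigma\omega)$ is then a morphism from $\Omega_\infty^\bullet$ to the complex of $\overline t$-allowable cochains computing $I^{\overline t}H^\bullet(X)$; it is compatible with the sheaf-level identification above, inducing the identity on the degree-$0$ cohomology sheaf over $X_{reg}$ and, through the cone operator, the prescribed stalk isomorphisms, so it is a quasi-isomorphism. This exhibits the isomorphism as the one induced by integration on allowable simplices.
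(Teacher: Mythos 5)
First, a point of reference: the paper does not prove Theorem \ref{thm_intro_linfty} at all — it is imported verbatim from \cite{vlinfty} — so there is no in-paper proof to compare you against. What the paper does record from that reference is the local statement, the Poincar\'e lemma for $L^\infty$ cohomology (Theorem \ref{thm_poincare}), and that is where your argument goes wrong. Your overall architecture (sheafify the bounded de Rham complex, verify Deligne-type local conditions using the Lipschitz conic structure, conclude by the axiomatic characterization, realize the map by integration on allowable simplices) is sound and close in spirit to how \cite{vlinfty} and Section \ref{sect_de_rham_l1} of the present paper proceed. But the key stalk computation you propose is not the right one, and it changes the answer.

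You assert that at a point of a codimension-$c$ stratum the stalk cohomology of the bounded-forms complex is truncated at degree $\overline{t}(c)=c-2$, with nonzero stalks $I^{t}H^j(L_x)$ in degrees $j\le c-2$. The actual local statement, Theorem \ref{thm_poincare}, is that $H_\infty^j(M^\ep)=0$ for \emph{every} $j\ge 1$: the complex is locally acyclic in all positive degrees, i.e.\ it satisfies the Deligne axioms for the \emph{zero} perversity (truncation at degree $0$ along every stratum) and is quasi-isomorphic to the pushforward of the constant sheaf from the normalization $\widetilde{X}$. The conclusion $H_\infty^j(X_{reg})\simeq I^tH^j(X)$ then comes from $I^tH_j(X)\simeq H_j(\widetilde{X})$ together with the paper's convention $I^tH^j(X)=\mathrm{Hom}(I^tH_j(X);\R)$ — not from the stalk condition you wrote. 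The condition you wrote characterizes the complex whose hypercohomology is $I^tH_{m-j}(X)\simeq I^0H^j(X)$, which is a different group in general. Concretely, for the suspension of the torus treated at the end of the paper, the cone points have $c=3$ and link $L=T^2$; your axiom would give a degree-$1$ stalk $I^tH^1(T^2)=\R^2$ (since $1\le c-2$) and hence $H^1_\infty(X_{reg})=\R^2$, whereas the correct value, per Theorem \ref{thm_poincare} and the table, is $0$. The same bookkeeping error undercuts your heuristic that boundedness of primitives is ``obstructed'' in degrees $\le c-2$: because the links shrink as one approaches the stratum, all periods of a bounded closed form over cycles in the link must vanish, which is exactly why bounded primitives exist in \emph{every} positive degree. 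To repair the proof you should replace your support condition by full local acyclicity in positive degrees together with the identification of the degree-$0$ cohomology sheaf with that of $\pi_*\R_{\widetilde{X}}$, and only then invoke the equality $I^tH^j(X)=H^j(\widetilde{X})$.
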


 Here, $H_\infty^\bullet$ denotes the $L^\infty$ cohomology and $I^tH^j(X)$ the intersection cohomology of
 $X$ in the maximal perversity.
  The definitions of these cohomology theories are recalled in sections \ref{sect_ih} and \ref{sect_linfty} below.
  We write $X_{reg}$  for the nonsingular part of $X$,
  i. e. the set of points at which $X$ is a smooth manifold.

    Intersection homology was discovered by M. Goresky and R. MacPherson who
 computed these homology groups. What makes it  very attractive is that they showed in their fundamental
paper \cite{ih1} that it satisfies Poincar\'e duality for a quite
large class of sets (recalled in Theorem \ref{thm_poincare_ih}),
enclosing all the complex analytic sets (see also \cite{ih2}).

In view of the above paragraph, the two above de Rham theorems
raise the very natural question of  whether we can hope for
Poincar\'e duality between $L^1$ and $L^\infty$ cohomology.
Actually, the two above theorems, via Goresky and MacPherson's
generalized Poincar\'e duality, admit the following
corollary.

\begin{cor}\label{cor_poincare_duality_intro}
Let $X$ be an oriented subanalytic  pseudomanifold with isolated
singularities. Then, $L^1$ cohomology is Poincar\'e dual to $L^\infty$
cohomology in dimension $j<m-1$, i. e. for any  $j < m-1$:
$$H_{(1)} ^j(X_{reg}) \simeq  H_\infty^{m-j} (X_{reg}).$$
\end{cor}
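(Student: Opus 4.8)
The plan is to deduce Corollary~\ref{cor_poincare_duality_intro} by chaining together the two de Rham theorems already available (Theorem~\ref{thm_intro} and Theorem~\ref{thm_intro_linfty}) with the Goresky--MacPherson generalized Poincar\'e duality for intersection homology. First I would observe that for a pseudomanifold $X$ with only isolated singularities, the two complementary perversities that matter are the zero perversity $\bar{0}$ and the top perversity $\bar{t}$; moreover, in the range of degrees below $m-1$ the intersection cohomology for the maximal perversity agrees with ordinary (singular) cohomology of $X$, while in the complementary range it agrees with ordinary cohomology of $X_{reg}$ (equivalently, with cohomology of the complement of a neighbourhood of the singular set). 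Concretely, if $x_1,\dots,x_r$ are the isolated singular points and $L_i$ their links, the long exact sequences relating $I^{\bar{t}}H^j(X)$, $H^j(X)$, and the local contributions $H^j(\mathrm{cone}(L_i))$ show that $I^{\bar{t}}H^j(X)\simeq H^j(X)$ for $j<m-1$ and, dually, $I^{\bar 0}H^j(X)\simeq H^j(X_{reg})$ in the appropriate range.

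Next I would invoke Theorem~\ref{thm_intro}: since $M=X_{reg}$ is a bounded subanalytic submanifold of $\R^n$ (it is bounded because $X$, being a compact pseudomanifold, sits in some ball), its $L^1$ cohomology is isomorphic to its singular cohomology, hence
\[
H_{(1)}^j(X_{reg})\simeq H^j(X_{reg}).
\]
Then I would invoke Theorem~\ref{thm_intro_linfty} for the compact subanalytic pseudomanifold $X$:
\[
H_\infty^{j}(X_{reg})\simeq I^{\bar t}H^{j}(X).
\]
Finally, Goresky--MacPherson duality (Theorem~\ref{thm_poincare_ih}, which the excerpt promises to recall) for an oriented $m$-dimensional pseudomanifold gives a nondegenerate pairing, hence an isomorphism $I^{\bar 0}H^{j}(X)\simeq I^{\bar t}H^{m-j}(X)$ (over $\Q$, or with the relevant coefficients). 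Stringing these together: for $j<m-1$,
\[
H_{(1)}^{j}(X_{reg})\simeq H^{j}(X_{reg})\simeq I^{\bar 0}H^{j}(X)\simeq I^{\bar t}H^{m-j}(X)\simeq H_\infty^{m-j}(X_{reg}),
\]
which is exactly the claimed isomorphism.

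The step I expect to be the main obstacle is the bookkeeping on perversities and degrees that justifies the two identifications $H^j(X_{reg})\simeq I^{\bar 0}H^j(X)$ and $H^j(X)\simeq I^{\bar t}H^j(X)$ in the stated range, together with the precise hypothesis under which Goresky--MacPherson duality applies. One must be careful that the isolated-singularity hypothesis forces the link dimension to be $m-1$, so that the local cohomology truncation occurs exactly at degree $m-1$; this is why the duality statement is only claimed for $j<m-1$ and not for all $j$. I would also need to check that all isomorphisms are compatibly induced by integration of forms over allowable simplices (as Theorem~\ref{thm_intro_linfty} asserts for the $L^\infty$ side, and as Theorem~\ref{thm_intro} should provide on the $L^1$ side), so that the composite isomorphism is the one induced by the integration pairing between $L^1$ and $L^\infty$ forms — though for the bare statement of the corollary, abstract isomorphism of groups suffices and the naturality can be deferred to the discussion of the $L^1$ Stokes' property later in the paper.
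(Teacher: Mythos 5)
Your proof is correct and follows essentially the same route as the paper: chain $H_{(1)}^j(X_{reg})\simeq H^j(X_{reg})\simeq I^0H^j(X)\simeq I^tH^{m-j}(X)\simeq H_\infty^{m-j}(X_{reg})$ using Theorems \ref{thm_intro}, \ref{thm_intro_linfty} and Goresky--MacPherson duality. The only (cosmetic) difference is that the paper justifies $I^0H^j(X)\simeq H^j(X_{reg})$ for $j<m-1$ directly from the $0$-allowability condition (a $0$-allowable $j$-chain with $j<m-\dim X_{sing}$ cannot meet the singular strata, so $I^0C_j(X)=C_j(X_{reg})$ in the relevant degrees), rather than via the cone formula and local long exact sequences.
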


More generally, if the singular locus is of dimension $k$ then the
$L^1$ cohomology is dual to the $L^\infty$ cohomology in dimension
$j <m-k-1$. This is due to the fact that in this case intersection
homology coincides with the usual homology of $X_{reg}$ (in
dimension $j$). Intersection homology turns out to be very useful to
assess the lack of duality between $L^1$ and $L^\infty$ cohomology.
We see that the obstruction for this duality to hold is of purely
topological nature. Although the $L^1$ and $L^\infty$ conditions are
closely related to the metric structure of the singularities, the
above theorems show that the knowledge of the topology of the
singularities is enough to enure Poincar\'e duality. It is worthy of
notice that the only data of the topology of $X_{reg}$ is not enough.

In his study of $L^2$ cohomology, Cheeger also pointed out a
problem that may arise on singular varieties, even with conical
singularities: the $L^2$ Stokes' property may fail. Roughly speaking, this property says that the exterior differential operator is
self-adjoint on $L^2$ forms (up to sign, considering $(m-j)$ -forms as the dual of $j$-forms, see (\ref{eq_Stokes'_intro})).
This property is crucial
in Hodge theory, which yields Poincar\'e duality as a byproduct.
Cheeger investigated the case of conical singularities in
\cite{c2} and completely clarified the situation. He showed that
the $L^2$ Stokes' property holds on conical singularities if and only if Poincar\'e
duality holds. Thus, in this case, a nice Hodge theory may be
performed and Cheeger was able to prove that every cohomology
class has a unique harmonic representative.
Cheeger's  $L^2$ Stokes' property  is also crucial because it
allows to define a pairing on the $L^2$ cohomology groups by
integrating wedge products of forms. The Poincar\'e duality isomorphism on  $L^2$ cohomology then
results from this pairing which provides a very natural isomorphism.

The $L^p$ Stokes'
property has been then studied by Y. Youssin  on $f$-horns in
\cite{y}, who obtained an analogous result.

Therefore, in our framework,  the latter duality for $L^1$ cohomology very naturally
raises the question on whether the $L^1$ Stokes property holds and
whether integration provides an isomorphism between $L^1$ and
$L^\infty$ cohomology. In order to be more specific, let us
explicitly define the {\bf $L^1$ Stokes' property} by saying that
it holds (in dimension $j$) on a $C^\infty$ manifold $M$  of
dimension $m$ whenever for any $C^\infty$ $L^1$ $j$-form $\alpha$,
with $d\alpha$ $L^1$ we have:
\begin{equation}\label{eq_Stokes'_intro}\int_M \alpha \wedge d\beta
=(-1)^{j+1} \int_M d \alpha \wedge \beta,\end{equation} for any
$L^\infty$ $(m-j)$-form $\beta$ with $d\beta$ $L^\infty$.

For smooth forms on compact manifolds without boundary this is
always true by Stokes' formula. Somehow, the question is whether
the singularities behave like a boundary or if the closure of $M$
may behave like a manifold. This question occurs especially in the
case where the singular locus of the closure of $M$ is of low
dimension.

We shall answer this question in a very precise way, giving a
$L^1$ counterpart of Cheeger's theorem on the $L^2$ Stokes'
property.   Again,
our theorems on $L^1$ cohomology  hold for any
subanalytic bounded manifold (metrically conical or not).

Given a submanifold $M\subset \R^n$ we shall write $\delta
M$ for the set $cl(M)\setminus M$, where $cl(M)$ stands for the
topological closure of $M$. We shall prove:

\begin{thm}\label{thm_intro_l1_stokes_property} Let $j<m$ and let $M$ be a bounded subanalytic oriented
manifold.   The $L^1$ Stokes' property holds for $j$-forms iff
$\dim \delta M <m-j-1$.
\end{thm}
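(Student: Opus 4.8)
The plan is to split Theorem \ref{thm_intro_l1_stokes_property} into its two implications and attack each using the explicit Lipschitz conical structure established in section \ref{sect_lip}.

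For the ``only if'' direction, I would argue by contraposition: assuming $\dim \delta M \geq m-j-1$, I want to produce a smooth $L^1$ $j$-form $\alpha$ with $d\alpha$ $L^1$, together with an $L^\infty$ $(m-j)$-form $\beta$ with $d\beta$ $L^\infty$, violating (\ref{eq_Stokes'_intro}). The idea is local near a point $p$ of the stratum of $\delta M$ of dimension exactly $k := \dim\delta M$. Using the Lipschitz-conical description of the germ of $cl(M)$ at $p$, one sees that a neighborhood of $p$ in $M$ looks, up to a bi-Lipschitz map, like a product (stratum direction of dimension $k$) times the cone over the link; the link has dimension $m-k-1$. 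When $k \geq m-j-1$, i.e.\ $m-j-1 \leq k$, a ``tube'' $\gamma$ over a $j$-cycle sitting transverse to the singular stratum carries nontrivial boundary information: concretely, one takes $\beta$ supported near $p$ so that $\int_M d\alpha\wedge\beta$ and $\int_M \alpha\wedge d\beta$ differ by a boundary term of the form $\int_{\partial} \alpha\wedge\beta$ which does not vanish because the forms do not decay fast enough toward $\delta M$. Here the $L^1$/$L^\infty$ integrability is exactly what is barely satisfied (the critical exponent being governed by the dimension count $k = m-j-1$), so Stokes picks up a genuine contribution from the singularity acting as a boundary of positive ``measure''.

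For the ``if'' direction, assume $\dim \delta M < m-j-1$. I would cover $M$ near $\delta M$ by finitely many Lipschitz-conical charts furnished by section \ref{sect_lip}, and use a partition of unity subordinate to this cover together with a family of cutoff functions $\chi_\varepsilon$ that are $1$ away from the $\varepsilon$-neighborhood of $\delta M$, $0$ on a smaller neighborhood, and have gradient of order $1/\varepsilon$ supported on an annular region. Applying honest Stokes on $M\cap\{\chi_\varepsilon = 1\}$ (a manifold with boundary), the defect in (\ref{eq_Stokes'_intro}) is bounded by $\int |d\chi_\varepsilon|\,|\alpha|\,|\beta|$, and since $|\alpha|$ is $L^1$, $|\beta|$ is $L^\infty$, and the annular region where $|d\chi_\varepsilon|\sim 1/\varepsilon$ has, by the conical structure and the dimension bound $\dim\delta M < m-j-1$, volume (weighted against $|\alpha|$) that is $o(\varepsilon)$, this defect tends to $0$ as $\varepsilon\to 0$. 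Letting $\varepsilon\to 0$ gives (\ref{eq_Stokes'_intro}). The key quantitative input is an estimate, obtained from the Lipschitz trivialization, of the form $\int_{M\cap T_\varepsilon}|\alpha| = o(\varepsilon)$ where $T_\varepsilon$ is the annular collar, which in turn follows from Fubini along the conical fibers once one knows the distance function to $\delta M$ behaves subanalytically and the link has dimension $< m-j-1$.

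The main obstacle I anticipate is making the collar/cutoff estimate uniform across the whole of $\delta M$, not just near a single stratum: $\delta M$ itself is stratified, the conical charts of section \ref{sect_lip} are only local, and the distance-to-$\delta M$ function interacts with the nested strata in a way that requires the full strength of the explicit metric triangulation (the control of how the metric is distorted). One must check that the bi-Lipschitz constants and the conical radii can be chosen uniformly on a compact piece, and that gluing the local estimates via the partition of unity does not destroy the $o(\varepsilon)$ bound; handling the forms $\alpha,\beta$ near lower-dimensional strata inside $\delta M$ (where several cones are nested) is where the argument is most delicate, and is precisely the point at which metrically non-conical behavior must be accommodated by the results of section \ref{sect_lip} rather than by a naive cone formula.
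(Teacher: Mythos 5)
Both halves of your proposal have genuine gaps, and in each case the gap is precisely the point where the paper has to do real work.

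In the ``if'' direction, your key quantitative claim --- that $\int_{M\cap T_\ep}|\alpha|=o(\ep)$ for the annular collar $T_\ep$, so that the cutoff defect $\ep^{-1}\sup|\beta|\int_{T_\ep}|\alpha|$ vanishes --- is false for a general $L^1$ form. The $L^1$ condition only gives $\int_{T_\ep}|\alpha|\to 0$ with no rate: already for an isolated conical singularity ($m$-dimensional cone, $\rho$ the distance to the vertex) the form $\alpha=\rho^{\delta-m}g$ with $0<\delta<1$ and $g$ smooth is $L^1$, yet $\int_{T_\ep}|\alpha|\sim\ep^{\delta}$, so $\ep^{-1}\int_{T_\ep}|\alpha|\to\infty$; a dyadic summation argument shows one cannot even extract a subsequence of radii along which the defect vanishes. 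Note also that your estimate makes no use of the hypothesis $\dim\delta M<m-j-1$, which should be a warning sign, and that the paper stresses that the conclusion holds ``no matter how fast the volume is collapsing near the singularities,'' so no volume-decay mechanism can be the right one. The paper's proof avoids estimating $\alpha$ on a collar altogether: it replaces $\omega$ by $\db\K\omega-\K\db\omega$ using the homotopy operator $\K$ of Proposition \ref{pro_proprietes_de_K} (built from the Lipschitz conic coordinates of section \ref{sect_lip}), integrates by parts on the link $N^\nu$, and uses the decay estimate (\ref{eq_int_komega_link}) for the \emph{primitive} $\K\omega$ --- which follows from the monotonicity of the metric coefficients $\varphi_i$ and dominated convergence, not from a volume bound. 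The degree hypothesis then enters through an induction on $\dim\delta M$ carried by the link (this is why the paper's inductive statement $(\textrm{A}_k)$ is formulated for products $M\times\dbb$: the lateral boundary piece $W_{\mu,\nu}$ is a product over the link and is killed by the induction hypothesis).

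In the ``only if'' direction, your heuristic that the boundary term survives ``because the forms do not decay fast enough toward $\delta M$,'' with a ``critical exponent governed by $k=m-j-1$,'' points at the wrong mechanism and produces no actual pair $(\alpha,\beta)$. The obstruction is topological, not metric: after the bi-Lipschitz reduction to a product $W\times X'$ ($W\subset\R^k$ open, $X'$ with an isolated singularity), the paper compares $H^{m-k}_{(1),X'}(M')\simeq 0$ with $H^{m-k}_{(1),X'}(M';\delta M\cap X')\neq 0$ (the latter coming from Corollary \ref{cor_dirichlet_de_rahm_intro}, i.e.\ ultimately from the nontrivial top intersection cohomology of the link); the discrepancy forces the existence of an $(m-1-k)$-form on $M'$ with $l_\omega\neq 0$, which is then wedged with coordinate $1$-forms on $W$ and tested against a Fubini-type $\beta$. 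Without this cohomological input (or an equivalent explicit construction exploiting the nontrivial top class of the link of the transverse slice), your sketch does not establish failure of (\ref{eq_Stokes'_intro}).
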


 In particular, if $ cl(M)$ has only isolated singularities then the
$L^1$ Stokes' property holds in any dimension $j<m-1$. In this case,
integration of forms induces the Poincar\'e duality isomorphism of
Corollary \ref{cor_poincare_duality_intro}.

Noteworthy, the obstruction for the $L^1$ Stokes property  to
hold is also purely topological. The only knowledge of the dimension of
the singular locus is enough to ensure that this property holds,
no matter how fast the volume is collapsing near the
singularities.

\subsection*{Dirichlet $L^1$ cohomology.} Let $M\subset \R^n$ be a subanalytic bounded submanifold. We just explained that in
the case of non closed oriented manifolds, the $ L^1$-Stokes' property may fail.   This  "boundary phenomenon"
may appear near the singularities  preventing the $L^1$ classes from being
Poincar\'e dual to the $L^\infty$ classes.

On compact  manifolds with boundary, "ideal boundary conditions"
are usually put in order overcome this kind of problems. They
give rise so-called {\bf Dirichlet cohomology}. The Dirichlet
forms are those whose restriction to the boundary is identically
zero. These are also the forms satisfying the $L^2$ Stokes'
property.

In our setting, if $\omega$ is a form defined on  $M$, it does not
make sense to require that it vanishes on $\delta{M}$. {\bf Dirichlet
$L^1$ cohomology} is thus usually defined (see for instance
\cite{iw}) as the cohomology of the  $L^1$ forms $\alpha$ (with
$d\alpha$ $L^1$) satisfying (\ref{eq_Stokes'_intro}). This is the
biggest space of $L^1$ forms on which $d$ is self-adjoint (up to
sign, identifying $L^\infty$ with the dual of  $L^1$). This cohomology theory is
discussed in section \ref{sect_l1sp}.

We will denote the Dirichlet $L^1$  cohomology of a submanifold
$M\subset \R^n$ by $H_{(1)}^{m-j}(M;\delta M)$. Now, as in the case of
manifolds with boundary, Lefschetz-Poincar\'e duality holds in
general:

\begin{thm}\label{thm_intro_poincare_dirichlet}
For any bounded  subanalytic orientable submanifold
$M\subset \R^n$:$$H_{(1)}^j(M;\delta M) \simeq H^{m-j}_{\infty}(M).$$
\end{thm}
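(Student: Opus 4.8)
The plan is to build the duality isomorphism from the pairing given by integration of wedge products, exactly as in the classical Lefschetz–Poincaré duality for manifolds with boundary, and then upgrade it to an isomorphism using the two de Rham theorems already at our disposal together with the Lipschitz-conical structure established in section \ref{sect_lip}. First I would observe that, by the very definition of Dirichlet $L^1$ cohomology recalled above, if $\alpha$ is an $L^1$ $j$-form with $d\alpha$ in $L^1$ satisfying the $L^1$ Stokes' identity (\ref{eq_Stokes'_intro}) and $\beta$ is an $L^\infty$ $(m-j)$-form with $d\beta$ in $L^\infty$, then the quantity $\int_M \alpha\wedge\beta$ depends only on the cohomology classes $[\alpha]\in H_{(1)}^j(M;\delta M)$ and $[\beta]\in H_\infty^{m-j}(M)$: indeed replacing $\beta$ by $\beta+d\gamma$ changes the integral by $\int_M \alpha\wedge d\gamma=(-1)^{j+1}\int_M d\alpha\wedge\gamma=0$ since $d\alpha=0$, and replacing $\alpha$ by $\alpha+d\gamma$ (with $\gamma$ Dirichlet) changes it by $\int_M d\gamma\wedge\beta=\pm\int_M \gamma\wedge d\beta$, which vanishes because $d\beta=0$ while Dirichlet classes pair trivially with closed $L^\infty$ forms. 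Hence integration induces a well-defined bilinear pairing
\begin{equation}\label{eq_pairing_dirichlet}
H_{(1)}^j(M;\delta M)\times H_\infty^{m-j}(M)\longrightarrow \R,\qquad ([\alpha],[\beta])\longmapsto \int_M\alpha\wedge\beta,
\end{equation}
and the theorem amounts to showing this pairing is perfect. Since Theorem \ref{thm_intro} gives $H_{(1)}^j(M)\simeq H^j(M)$ (ordinary singular cohomology of $M$) and Theorem \ref{thm_intro_linfty}, applied to $X=cl(M)$ with $X_{reg}=M$, identifies $H_\infty^{m-j}(M)$ with the intersection cohomology $I^tH^{m-j}(cl(M))$ in the maximal perversity, the sought duality is, on the topological side, precisely Goresky–MacPherson's generalized Poincaré duality between ordinary cohomology of the regular part (equivalently, relative cohomology $H^j(cl(M),\delta M)$) and top-perversity intersection cohomology, which holds for the pseudomanifold obtained by coning off $\delta M$ — the Lipschitz-conical structure of section \ref{sect_lip} guarantees $cl(M)$ is such a pseudomanifold.

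The key steps, in order, are as follows. (i) Identify Dirichlet $L^1$ cohomology with relative singular cohomology: show that the de Rham map of Theorem \ref{thm_intro} carries $H_{(1)}^j(M;\delta M)$ isomorphically onto $H^j(cl(M),\delta M)$; concretely, a Dirichlet $L^1$ form integrates to zero against cycles supported near $\delta M$ in the appropriate allowable sense, so its de Rham class lifts canonically to the relative group, and conversely every relative class is represented by a Dirichlet form — here I would use the Lipschitz deformation retraction of a neighbourhood of $\delta M$ furnished by the conical structure to cut off forms and control $L^1$ norms of the correction terms. (ii) Recall from Theorem \ref{thm_intro_linfty} the identification $H_\infty^{m-j}(M)\simeq I^tH^{m-j}(cl(M))$ and recall Goresky–MacPherson's Poincaré duality (Theorem \ref{thm_poincare_ih}), which pairs $I^tH^{m-j}(cl(M))$ with $I^0H_j(cl(M))$, the latter being the ordinary relative homology $H_j(cl(M),\delta M)$ — dually, $H^j(cl(M),\delta M)$. (iii) Verify that the analytic pairing (\ref{eq_pairing_dirichlet}) is compatible with the combinatorial intersection pairing under the de Rham isomorphisms of steps (i)–(ii): this is a diagram-chase identifying $\int_M\alpha\wedge\beta$ with the evaluation of the de Rham classes on the appropriate simplices, using that both de Rham maps are realized by integration over (allowable) simplices; compatibility plus perfectness of the combinatorial pairing then forces perfectness of (\ref{eq_pairing_dirichlet}).

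The main obstacle I expect is step (i), and within it the passage between the metric/analytic world of $L^1$ forms near $\delta M$ and the purely topological relative cohomology. One must show that a Dirichlet $L^1$ form — defined by an integral orthogonality condition, not by literal vanishing on $\delta M$ — nonetheless behaves cohomologically like a form supported away from $\delta M$; the wrinkle is that the volume of $M$ may collapse arbitrarily fast near $\delta M$, so cutting off against a distance function produces correction forms $d\chi\wedge\omega$ whose $L^1$ norms are not controlled by naive estimates. This is exactly where the explicit Lipschitz-conical description of section \ref{sect_lip} is indispensable: it provides a subanalytic controlled bump function and a Lipschitz collar in which the collapsing is governed by a subanalytic profile, so that the correction terms can be bounded and, after a further homotopy, absorbed. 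Once step (i) is secured, steps (ii) and (iii) are essentially bookkeeping over the already-established de Rham isomorphisms and the Goresky–MacPherson machinery, so the theorem follows.
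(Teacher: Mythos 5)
Your proposal has a genuine gap at its core, namely step (i): you identify the Dirichlet $L^1$ cohomology $H_{(1)}^j(M;\delta M)$ with the relative singular cohomology $H^j(cl(M),\delta M)$, and correspondingly $I^0H_j(cl(M))$ with relative homology of the pair. Neither identification is correct in general. Zero-perversity intersection (co)homology is not the relative (co)homology of $(X;X_{sing})$: already for the cone on a torus, $I^0H_1 \simeq H_1(T^2)\simeq \R^2$ while $H_1(cT^2,\mathrm{vertex})=0$; and for the suspension of the torus treated at the end of the paper, the table gives $H_{(1)}^j(X_{reg};X_{sing})=\R,\R^2,0,\R$ whereas $H^j(X,\{x_0,x_1\})=0,\R,\R^2,\R$. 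The correct target for the Dirichlet de Rham theorem is $I^0H^j$ (Corollary \ref{cor_dirichlet_de_rahm_intro}), and in the paper that corollary is a \emph{consequence} of Theorem \ref{thm_intro_poincare_dirichlet} combined with Theorem \ref{thm_intro_linfty} and Goresky--MacPherson duality, not an ingredient of its proof. So the route you sketch would require proving that de Rham theorem first, which is at least as hard as the duality itself, and your step (iii) --- the compatibility of the wedge-product pairing with the combinatorial intersection pairing under the two de Rham maps --- is not ``bookkeeping'': it is a substantial verification that is nowhere carried out in the paper and that the paper's argument deliberately avoids.

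What the paper actually does (Theorem \ref{thm_Poincare duality_dirichlet}) is a direct local-to-global sheafification of the pairing itself. One forms the complex of presheaves $U\mapsto \Omega_{(1),U}^{m-j}(U\cap M;U\cap\delta M)^*$ and $U\mapsto \Omega_{\infty,loc}^j(U\cap M)$, and shows that the map $\varphi_U^j(\alpha)=\bigl(\beta\mapsto\int_{U\cap M}\alpha\wedge\beta\bigr)$ is a local isomorphism: for $j>0$ both sides are locally acyclic by the Poincar\'e Lemma for $L^\infty$ cohomology (Theorem \ref{thm_poincare}) and the Poincar\'e Lemma for Dirichlet $L^1$ cohomology (Proposition \ref{pro_poinc_lemma_dirichlet}, which rests on the operators $\K$ and $\K_0$ and Proposition \ref{pro_Komega_satisfait_l1}); for $j=0$ one computes $H^m_{(1),X^\ep}(M^\ep;\delta M^\ep)\simeq\R$ via the long exact sequence (\ref{eq_long_dirichlet_local}) and induction on the dimension of the link, and exhibits a compactly supported volume form with nonzero integral. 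Bredon's spectral-sequence theorem then globalizes. Your well-definedness argument for the pairing is fine, and your instinct that the Lipschitz conic structure controls the analysis near $\delta M$ is right, but the bridge you propose through relative singular cohomology does not exist, so the proof as outlined would fail.
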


It is worthy of notice that this duality is a general fact on
bounded subanalytic manifolds: we do not assume that the closure
of $M$ is a pseudomanifold. The version stated in Theorem
\ref{thm_Poincare duality_dirichlet} is actually even stronger.

In particular, by Goresky and MacPherson's generalized Poincar\'e
duality,  the Dirichlet $L^1$ cohomology is isomorphic to
intersection homology in the zero perversity and Theorem
\ref{thm_intro_linfty} and Theorem
\ref{thm_intro_poincare_dirichlet} admit the following immediate
interesting corollary.

\begin{cor}\label{cor_dirichlet_de_rahm_intro}(De Rham theorem for Dirichlet $L^1$ cohomology)
Let $X$ be a subanalytic bounded orientable pseudomanifold.  We have:
$$ H_{(1)}^{j}(X_{reg};X_{sing})\simeq I^0 H^j(X_{reg}).$$
\end{cor}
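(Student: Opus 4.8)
The plan is to deduce Corollary~\ref{cor_dirichlet_de_rahm_intro} directly by combining the two structural results already available in the paper. By hypothesis $X$ is a subanalytic bounded orientable pseudomanifold, so $M := X_{reg}$ is a bounded subanalytic orientable submanifold of $\R^n$ and $\delta M = cl(M)\setminus M = X_{sing}$ (using that $X = cl(X_{reg})$ for a pseudomanifold). First I would invoke Theorem~\ref{thm_intro_poincare_dirichlet} applied to $M = X_{reg}$, which gives
\[
H_{(1)}^{j}(X_{reg};X_{sing}) \simeq H^{m-j}_{\infty}(X_{reg}).
\]
Next I would invoke Theorem~\ref{thm_intro_linfty} for the compact subanalytic pseudomanifold $X$, which identifies $H^{m-j}_{\infty}(X_{reg})$ with the intersection cohomology $I^{t}H^{m-j}(X)$ in the maximal (top) perversity $t$. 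Finally I would apply Goresky--MacPherson's generalized Poincar\'e duality for intersection homology (Theorem~\ref{thm_poincare_ih}) on the oriented $m$-dimensional pseudomanifold $X$, which pairs the top perversity $t$ with the zero perversity $0$ and yields $I^{t}H^{m-j}(X) \simeq I^{0}H^{j}(X)$; since on the regular part intersection cohomology in the zero perversity agrees with $I^0H^j(X_{reg})$ (the zero-perversity intersection cohomology being computable from $X_{reg}$, as noted in the discussion following Corollary~\ref{cor_poincare_duality_intro}), we obtain the claimed isomorphism $H_{(1)}^{j}(X_{reg};X_{sing})\simeq I^0H^j(X_{reg})$. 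Chaining these three isomorphisms completes the proof.

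The few points that need care, and which I would spell out, are: (i) checking that the compactness and orientability hypotheses of Theorem~\ref{thm_intro_linfty} and of Goresky--MacPherson duality are all implied by "$X$ is a subanalytic bounded orientable pseudomanifold" — boundedness plus the pseudomanifold condition forces $cl(X) = X$ to be compact, and orientability of $X$ is equivalent to orientability of the dense open submanifold $X_{reg}$; (ii) matching the perversity conventions so that the maximal perversity $t$ appearing in Theorem~\ref{thm_intro_linfty} is precisely the complementary perversity to $0$ in the duality pairing, so that the dual of $I^tH^{m-j}$ is $I^0H^{j}$ and not some other perversity; and (iii) justifying the identification $I^0H^j(X) \cong I^0H^j(X_{reg})$ — this is the standard fact that zero-perversity intersection cohomology does not "see" the singular strata in the relevant range and coincides with ordinary cohomology of the regular part, which is exactly the phenomenon already used in the paragraph after Corollary~\ref{cor_poincare_duality_intro}.

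The main obstacle, such as it is, is not in any single deep step — all three inputs are already established (two as theorems recalled in the excerpt, one as the classical Goresky--MacPherson theorem) — but rather in making the bookkeeping of perversities and of the regular-part identification airtight, so that the composite isomorphism lands on the group written in the statement. In particular I would be careful to state the duality in the form actually proved in \cite{ih1} (Theorem~\ref{thm_poincare_ih}), verify that $X$ satisfies its hypotheses, and confirm that the indexing in Theorem~\ref{thm_intro_poincare_dirichlet}, which is phrased with $m-j$ on the $L^\infty$ side and $j$ on the Dirichlet side, is consistent with the indexing in the corollary. Since this is a corollary whose purpose is to record the synthesis of Theorems~\ref{thm_intro_linfty} and~\ref{thm_intro_poincare_dirichlet}, the proof is genuinely a short diagram-chase and I would present it as such, with the verifications above included as one or two lines each.
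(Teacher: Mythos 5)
Your proposal is correct and is exactly the paper's argument: the author derives this corollary precisely by chaining Theorem \ref{thm_intro_poincare_dirichlet}, Theorem \ref{thm_intro_linfty}, and Goresky--MacPherson's generalized Poincar\'e duality (with the complementary perversities $0$ and $t$), as stated in the paragraph preceding the corollary. The only small caveat is your point (iii): the notation $I^0H^j(X_{reg})$ in the statement is simply the intersection cohomology of the pseudomanifold $X$ (which is determined by $X_{reg}$ since $X=cl(X_{reg})$), so no identification with the ordinary cohomology of the regular part is needed there --- indeed such an identification would be false without a restriction on $j$ relative to $\dim X_{sing}$.
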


Here $X_{sing}$ stands for the singular locus and $X_{reg}$
denotes its complement in $X$.


\subsection*{Content of the paper.}
   Section \ref{sect_lip} introduces and yields  the "Lipschitz conic structure of subanalytic
   sets" (definition \ref{dfn conical}). We prove in section \ref{sect_weakly_l1} some basic
   results on $L^1$ cohomology and  establish Theorem
   \ref{thm_intro} in section \ref{sect_de_rham_l1}. Poincar\'e duality for $L^1$ cohomology is
   then discussed in section \ref{sect_cor_poinc}.
   In section \ref{sect_dirichlet}  we introduce the $L^1$ Dirichlet cohomology groups and
   establish the relative form of Lefschetz-Poincar\'e duality claimed in
   Theorem \ref{thm_intro_poincare_dirichlet}. We then study  the $L^1$ Stokes'
   property, proving Theorem
   \ref{thm_intro_l1_stokes_property}
   in  section \ref{sect_l1sp}. We end this paper with a concrete
   example, the suspension of the torus, on which we discuss all
   the results of this paper.

\subsection*{Acknowledgement}This paper was started while the author was a participant of the semester on O-minimal structures and real algebraic geometry
at the  Fields institute of Toronto and carried out in Cracow
while the author was a researcher for the Polish Academy of
Science. The author wishes to  thank these two institutions for
their hospitality. It is also his pleasure to thank Andrzej Weber,
Pierre Milman, Wies\l aw Paw\l ucki  and Jean-Paul Brasselet  for
valuable discussions.

\subsection*{Notations and conventions.} In the sequel, all the considered sets and maps will be
subanalytic (if not otherwise specified) except the differential forms. 

 By
"{\bf subanalytic}" we mean "globally subanalytic", i. e. which remains
subanalytic after  compactifying  $\R^n$ (by $
\mathbb{P}^n$).

 Given a set
$X\subset \R^n$, we denote by $C^j(X)$ the singular cohomology
cochain complex and by $H^j(X)$ the cohomology groups. Simplices are defined as
continuous (subanalytic) maps $\sigma :\Delta _j \to X$, where
$\Delta_j$ is the standard simplex.

Given two nonnegative functions $\xi:X\to \R$ and $\eta:X\to  \R$ we
will write $\xi \sim \eta$ if there is a positive constant $C$
such that $\xi \leq C\eta$ and $\eta \leq C\xi$. We write
$[\xi;\eta]$ for the set $\{x \in X\times \R: \xi(x)\leq y\leq \eta(x)\}$
and define similarly the open interval $(\xi;\eta)$.

Given a (subanalytic) set $X$, we denote by $X_{reg}$ the set of
point near which $X$ is a $C^\infty$ manifold  and by $X_{sing}$,
its complement in $X$. The subsets $\delta M$ and $cl(M)$ are also
as explained above. By manifold we will mean $C^\infty$ manifold.

We shall say that a function $\xi:X\to \R$ is {\bf Lipschitz} if there
is a constant $C$ such that for any $x$ and $x'$ in $X$:
$$|\xi(x)-\xi(x')|\leq C |x-x'|.$$
A map $f:X\to \R^k$ is Lipschitz if all its components are
Lipschitz and a homeomorphism $h$ is {\bf bi-Lipschitz} if both $h$ and
$h^{-1}$ are Lipschitz.

We shall write $S^{n-1}(x_0;\ep)$ for the sphere of radius $\ep>0$
centered at $x \in \R^{n}$  and $B^n(x_0;\ep)$    for the
corresponding ball.  We will write $L(x_0;X)$ for the  {\bf link} of $X$
at $x_0$. It is the subset $S^{n-1}(x_0;\ep)\cap X$, where $\ep>0$
is small enough. By \cite{vpams} this subset is, up to a subanalytic bi-Lipschitz map, independent of $\ep>0$.

\section{On the Lipschitz geometry of subanalytic sets.}\label{sect_lip}
 The results of this section  will be very
important to compute the $L^1$  cohomology groups
later on.

It is well known that subanalytic sets are locally homeomorphic to
cones. It is not true that subanalytic germs of singularities are
bi-Lipschitz homeomorphic to cones.
 We describe the metric types of subanalytic
  germs in a very precise way. This is very important  since the $L^1$
  condition heavily relies on the metric. Roughly speaking, we  show that, given a subanalytic germ $X$,
   we can find a subanalytic  homeomorphism from a cone  (over the link) such that the eigenvalues of the
   pullback of the metric induced by $\R^n$ on $X$ by
   this homeomorphism are increasing as we are wandering away  from the origin.
   This improves significantly the results of \cite{vlt} \cite{vlinfty} where
   a Lipschitz strong deformation retraction onto the origin was
    constructed.

Given  $n>1$ and a positive constant $R$ we set:
$$\C_n(R):=\{(x_1;x') \in \R\times \R^{n-1}:  0 \leq |x'| \leq R x_1\,\}.$$
For $n=1$, we just define $\C_1$ as the positive $x_1$-axis.

\subsection{Regular lines.} We start by recalling a result of
\cite{vlinfty}.

\begin{dfn}\label{boule  reguliere}
Let $X$ be a   subset of $\R^{n}$. An element $\lambda$ of
$S^{n-1} $ is said to be {\bf regular  for $X$} if there is a
positive number $\alpha$ such that:
$$dist(\lambda;T_x X_{reg}) \geq \alpha,$$
for any $x$ in $X_{reg}$.
\end{dfn}

Regular lines do not always exist, as it  is shown by the simple
example of a  circle. Nevertheless, given a subanalytic set of
empty interior, up to a bi-Lipschitz homeomorphism,  we can  get a
line which is regular.
 This is
what is established by  theorem $3.13$  of \cite{vlt}. This
theorem has then been improved in \cite{vlinfty} into a statement
that we shall need in its full generality. It is recalled in Lemma
\ref{prop proj reg}. To state this lemma,  we need  the
following  definition.

\begin{dfn}
Let $A, B \subset \R^n$. A  map $h:A \to B$  is {\bf $x_1$-preserving} if it
preserves the first coordinate in the canonical basis of $\R^n$.
\end{dfn}


We denote by $\pi_n:\R^{n}\to \R^{n-1}$ the canonical projection. In the Lemma below all the considered germs are germs at the origin.

\medskip

\begin{lem}\label{prop proj reg}\cite{vlinfty}
Given germs $X_1,\dots,X_s \subset \C_{n}(R)$, there exist a germ of
$x_1$-preserving bi-Lipschitz
  homeomorphism $h:\C_n(R) \to \C_{n}(R)$, with $R>0$, and a cell decomposition $\mathcal{D}$ of $\R^n$ such
  that:
  \begin{enumerate}
  \item  $\D$ is compatible with $h(X_1),\dots, h(X_s)$ \item $e_n$ is regular for any cell of $\D$ in $\C_{n}(R)$
   which is a graph over a cell of $\C_{n-1}(R)$ of $\D$
\item Given finitely many   germs of nonegative functions
$\xi_1,\dots,\xi_l$ on $\C_{n}(R)$, we may assume that on each
cell $D$ of $\mathcal{D}$, every germ  $\xi_i\circ h$ is $\sim$
to a function of the form:\begin{equation}\label{eq
prep}|y-\theta(x)|^r a(x)\end{equation} (for $(x;y) \in \R^{n-1}
\times \R$) where
$a,\theta:\pi_n(D) \to \R$ are   functions with $\theta$ Lipschitz and $r \in \Q$. 
\end{enumerate}
\end{lem}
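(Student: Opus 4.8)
The plan is to argue by induction on $n$; conclusions (1) and (2) are, up to phrasing ``existence of an $x_1$-preserving regular projection'' in terms of cell decompositions, Theorem $3.13$ of \cite{vlt}, and conclusion (3) is obtained by carrying the subanalytic preparation theorem through the same induction. The case $n=1$ is immediate: $\C_1$ is a half-line, each germ $X_i$ is either $\{0\}$ or $\C_1$, so one takes $h=\mathrm{id}$ and $\D=\{\{0\},\,\C_1\setminus\{0\}\}$; by the one-variable (Puiseux) preparation of subanalytic functions each $\xi_i$ is $\sim c\,x_1^{r}$ with $r\in\Q$, which has the form (\ref{eq prep}) with center $\theta\equiv 0$.

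For the inductive step write $(x,y)\in\R^{n-1}\times\R$ with $y=x_n$, and recall that $\pi_n(\C_n(R))=\C_{n-1}(R)$. First I would apply the preparation theorem for subanalytic functions in the variable $y$ to $\xi_1,\dots,\xi_l$ together with functions defining $X_1,\dots,X_s$: this yields a finite cell decomposition of $\R^n$ compatible with the $X_i$ on each cell of which $\xi_i\sim|y-\theta(x)|^{r}a(x)u(x,y)$ with $u$ a unit, hence $\xi_i\sim|y-\theta(x)|^{r}a(x)$ with $r\in\Q$, since $\sim$ absorbs the bounded unit $u$. At this stage, however, the centers $\theta$ and the functions whose graphs are the ``wall'' cells are merely subanalytic: a graph cell need not have $e_n$ as a regular direction, so (2) can fail, and the $\theta$'s need not be Lipschitz, so (3) is not yet achieved.

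The heart of the proof is to straighten all of this by a single $x_1$-preserving bi-Lipschitz homeomorphism. Let $G\subset\C_n(R)$ be the union of the graphs of all the centers and of all the wall functions found above; it is a subanalytic germ of dimension $<n$. Using the $x_1$-preserving regular projection theorem of \cite{vlt} --- whose ``base'' part, in the variables $(x_1,\dots,x_{n-1})$, is exactly the inductive hypothesis in dimension $n-1$, applied here to $\pi_n(G)$ and the $\pi_n(X_i)$ --- one picks, among a controlled finite family of $x_1$-preserving directions, a direction regular for $G$ and for the regular parts of the $X_i$, and realizes it by composing elementary $x_1$-preserving bi-Lipschitz shears $(x,y)\mapsto(x,\,y-\psi(x))$, $\psi$ Lipschitz subanalytic, together with their lower-dimensional analogues supplied by the induction; this composition is the sought homeomorphism $h$, defined on $\C_n(R')$ for some $0<R'\le R$ (shrinking the cone if necessary). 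After $h$, any cell of a cell decomposition $\D$ refining the transformed data that is a graph over a cell of $\C_{n-1}(R')$ of $\D$ is a Lipschitz graph with uniformly bounded gradient, i.e.\ $e_n$ is regular for it, which is (2); and since such shears conjugate functions of the form (\ref{eq prep}) to functions of the same form with Lipschitz center, re-preparing each $\xi_i\circ h$ in $y$ and refining $\D$ to be compatible with $h(X_1),\dots,h(X_s)$ delivers (3).

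I expect the main obstacle to be precisely this simultaneous control: forcing one homeomorphism $h$ to straighten every graph cell and, at the same time, every center occurring in the prepared form of the $\xi_i\circ h$, while checking that these properties survive the refinements of $\D$ needed for compatibility with the $h(X_i)$. This is where one cannot simply invoke Parusi\'nski's regular projection theorem --- which would require a non-$x_1$-preserving linear change of coordinates --- but must run the $x_1$-preserving construction of \cite{vlt} and \cite{vlinfty}, built precisely to carry both sets and functions through an induction on the number of variables.
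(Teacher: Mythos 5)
First, a point of comparison: the paper gives no proof of Lemma \ref{prop proj reg} at all. It is imported verbatim from \cite{vlinfty}, as an improvement of Theorem 3.13 of \cite{vlt}, so there is no in-paper argument to measure your proposal against; what follows assesses your sketch on its own terms.

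Your overall strategy (induction on $n$, one-variable preparation of the $\xi_i$ in the last variable, then an $x_1$-preserving regularization making $e_n$ regular for the graph cells and the centers Lipschitz) is the right general shape, and you correctly identify where the difficulty lives. But the mechanism you propose for the hard step does not work. You claim the homeomorphism $h$ can be realized by composing elementary shears $(x,y)\mapsto(x,\,y-\psi(x))$ with $\psi$ Lipschitz, and that such shears ``conjugate functions of the form (\ref{eq prep}) to functions of the same form with Lipschitz center.'' A shear by a Lipschitz $\psi$ sends the center $\theta$ to $\theta-\psi$, which is Lipschitz if and only if $\theta$ already was; and a shear by a non-Lipschitz $\psi$ is not bi-Lipschitz. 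So Lipschitz shears can never repair a non-Lipschitz center. The actual mechanism (visible in the proof of Theorem \ref{thm Lipschitz conic structure} in this paper, which reruns the same machinery) is different: one adjoins the \emph{graphs} of the centers $\theta$ to the family of sets for which conclusion (2) is arranged; once $e_n$ is regular for a transformed graph cell, that cell is automatically a Lipschitz graph, and the Lipschitzness of the new centers is read off from (2) rather than produced by an explicit formula. The homeomorphisms involved are fiberwise affine rescalings between consecutive graphs, not shears.

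A second gap is your final step ``re-preparing each $\xi_i\circ h$ in $y$ and refining $\D$ \dots delivers (3).'' Re-preparation after $h$ produces fresh centers with no control on their Lipschitz character, so it reintroduces exactly the problem the construction of $h$ was meant to eliminate; you would need another round of straightening, and the argument does not terminate. The correct order is: prepare once, include the graphs of the resulting centers among the input data, build $h$, and then \emph{verify} (not re-prepare) that $\xi_i\circ h$ retains the form (\ref{eq prep}) because $h$ maps the slab between two consecutive graphs onto the slab between their images affinely in the fiber, so that $|y-\theta(x)|$ is comparable to its transform up to the constants controlled as in Remark \ref{rmk graphes en plus}. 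Without this last verification, conclusion (3) is not established.
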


\begin{rem}\label{rmk graphes en plus}
 Given a family of    Lipschitz functions $f_1, \dots, f_k$
defined over $\R^n$ we can find some Lipschitz functions
$\xi_1\leq \dots\leq   \xi_l$ and a cell decomposition $\D$ of
$\R^{n-1}$ such that over each cell $D \in \R^{n-1}$ delimited by
the graphs of two consecutive functions $[\xi_{i|D};\xi_{i+1|D}]$,
with $D\in \D$, the functions $|q_{n+1}-f_i(x)|$ (where
$q=(x;q_{n+1})$) are comparable to  each other (for relation
$\leq$)
 and comparable to the functions $f_i \circ \pi_n$. Indeed, it suffices to choose a cell decomposition $\D$ compatible with the sets $f_i=f_j$ and to  add the graphs of the
 functions   $f_i$, $f_i+f_j$  and  $\frac{f_i+f_j}{2}$. We may
 then use $\min$ and $\max$ to transform this family into an
 ordered
 family (for $\leq$).
 \end{rem}

\subsection{Lipschitz conic structure  of subanalytic sets.} This section is crucial in the proof of our de Rham theorems. We introduce and establish  what we call "the Lipschitz conic structure" of subanalytic sets.

  Let  $X\subset \R^n$ of dimension $m$  and let $x_0\in cl(X) $.

\begin{dfn}
A {\bf tame basis} on a manifold $M$ is a basis $\lambda
_1,\dots,\lambda _m$ ($m=\dim M$) of bounded subanalytic
$1$-forms on $ M$  such that:
\begin{equation}\label{eq_tame_basis}|\wedge_{i=1} ^m \lambda_i|\geq \ep >0,
\end{equation} on $M$.
%
\end{dfn}

Let us make a point that we do not assume the tame bases to be
continuous, but, as they are assumed to be subanalytic, they are
indeed implicitly required to be smooth almost everywhere. This
will be enough for us, since, for integrability conditions, only the behavior almost everywhere is relevant. Alike, in the
definition below, the $\varphi_i$'s do  not need to be continuous, but
indeed only the generic values of these functions really matter
since (\ref{item_dfn_metric_conical}) of the definition below is required almost everywhere. We shall also pull-back the forms via subanalytic maps. The pullback will be well defined almost everywhere since, once again, subanalytic mappings are smooth generically.

\begin{dfn}\label{dfn conical}
We say that  $X$ is {\bf Lipschitz conical} at
$x_0 \in X$ if there exist a positive real number $\ep$ and a
 Lipschitz homeomorphism
$$h: (0;\ep) \times L(x_0;X)\to X \cap B^n(x_0;\ep) \setminus \{x_0\},$$ with
$d(x_0;h(t;x))=t$, such that we can find some positive functions $
\varphi_1,\dots,\varphi_{m-1}:(0;\ep) \times  L(x_0;X) \to \R$, for
which we have:
\begin{enumerate} \item\label{item_dfn_phi_decreasing} The $\varphi_i(t;x)$'s are decreasing to
zero as $t$ is going to zero for any $x$, \item The $\varphi_i(t;x)$'s
are bounded below  on any closed set disjoint from $
\{t=0\}$.\item\label{item_dfn_metric_conical} There is
 a tame basis
$ \lambda_1,\dots,\lambda_{m-1}$ of $L(x_0;X_{reg})$ such that if $\theta_i:=h^{-1*}( \varphi_{i}\cdot \lambda _i )$ then $(h^{-1*}dt; \theta_1;\dots;\theta_{m-1})$ is a tame basis on a dense subset of $X_{reg}$. \end{enumerate}
\end{dfn}

\medskip

\begin{thm}\label{thm Lipschitz conic structure}
 Every  (subanalytic) set  is Lipschitz
 conical at any point.
\end{thm}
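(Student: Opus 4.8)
The plan is to induct on $n$ (the dimension of the ambient space), using Lemma \ref{prop proj reg} to reduce the germ of $X$ at $x_0$ to a germ sitting inside a cone $\C_n(R)$ over which the last coordinate vector $e_n$ is regular for every relevant cell and over which all the distance-type functions to the singular locus are in the normal form \eqref{eq prep}. Without loss of generality $x_0=0$, and after composing with the bi-Lipschitz $x_1$-preserving homeomorphism furnished by the lemma we may assume $X$ is a union of cells of a cell decomposition $\D$ of $\C_n(R)$, each of which is a graph of a Lipschitz function over a cell of $\C_{n-1}(R)$ of the induced decomposition (the positive-dimensional ones are; the cells contained in coordinate subspaces are handled by the inductive hypothesis).

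The main construction is then the following. For each top-dimensional cell $D$ of $X$, written as the graph $y=\psi(x)$ of a Lipschitz function over a cell $D'\subset \C_{n-1}(R)$, the projection $\pi_n$ restricted to $D$ is bi-Lipschitz onto $D'$, so one can use induction on $D'\subset \R^{n-1}$: by the inductive hypothesis (applied to the set $\pi_n(X)\subset \R^{n-1}$, or rather to $\C_{n-1}(R)$ with its cell structure) there is a Lipschitz homeomorphism $h'$ from the open cone $(0;\ep)\times L(0;\pi_n(X))$ onto $\pi_n(X)$ near $0$ with $d(0;h'(t;x))=t$, together with functions $\varphi'_i$ and a tame basis $\lambda'_i$ satisfying (1)--(3). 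The homeomorphism $h$ on $X$ is obtained by lifting $h'$ through the graph maps $x\mapsto(x;\psi(x))$; one must renormalize the radial parameter so that $d(0;h(t;x))=t$ holds on $X$ and not just its projection, which is possible because the graphing functions are Lipschitz (hence the distance on $X$ is comparable to the distance on the projection, up to a bounded factor, and the resulting change of radial variable is bi-Lipschitz and $t\to 0$ as $t\to 0$). The extra form $dt$ in the tame basis on $X$ comes from pulling back $dt$ on the base; the other $m-1$ directions come from the $\theta_i$ on the base together with the differential of the graphing function, and here the key point is to choose the scaling functions $\varphi_i$ on $X$ so that the pullback $h^{-1*}(\varphi_i\lambda_i)$ stays \emph{bounded} (part of the tame-basis condition) while the wedge of all $m$ one-forms stays bounded below. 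This is exactly where the normal form \eqref{eq prep} is used: the eigenvalues of the pulled-back metric are, up to $\sim$, monomials $|y-\theta(x)|^r a(x)$, so one reads off the correct powers of $t$ (i.e. the $\varphi_i$) that compensate them, and the monotonicity in (1) follows from the signs of the exponents $r$ after possibly refining the cell decomposition so that along the radial direction each such monomial is genuinely monotone.

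A few gluing points need care. First, $X$ is a union of cells, and the cone structures produced cell by cell must be assembled into a single homeomorphism $h$ of $(0;\ep)\times L(0;X)$; this is done by noting that the link $L(0;X)=S^{n-1}(0;\ep)\cap X$ is itself stratified by the traces of the cells, that adjacent cells share the same radial parameter (all normalized to $d(0;\cdot)$), and that the $h$'s agree on the common boundary faces after a further compatible choice (one builds the $h$ on the closed cells of $L(0;X)$ inductively on dimension, extending from faces). Second, the lower-dimensional cells of $X$ (those contained in coordinate hyperplanes, where $e_n$ is not used) are covered directly by the inductive hypothesis in $\R^{n-1}$ and patched in the same way. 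Third, the base case $n=1$ is trivial: $\C_1$ is a ray and any $X\subset\R$ is $0$-dimensional near $0$, so $m\le 1$ and the condition is vacuous or reduces to the radial coordinate alone.

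The hardest part, I expect, is the simultaneous control of \emph{all} $m-1$ transverse directions: producing one tame, monotone scaling function is easy, but one must exhibit a single tame basis $(\lambda_1,\dots,\lambda_{m-1})$ of the link whose rescaled, pulled-back version remains a tame basis of $X_{reg}$ generically --- i.e. the $m$ one-forms must be simultaneously bounded and their wedge bounded away from $0$ almost everywhere. This requires using the preparation of \eqref{eq prep} not for a single function but for the full collection of functions describing the entries of the metric matrix in suitable coordinates, and then diagonalizing (up to $\sim$) the metric by a subanalytic change of frame that is itself bounded with bounded inverse. The combinatorics of keeping the cell decomposition compatible with all of this, while also ensuring the monotonicity of each monomial along rays, is the technical core; everything else (the lifting through graphs, the renormalization of $t$, the patching over the link) is bookkeeping made routine by the Lipschitz hypotheses.
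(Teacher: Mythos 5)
Your overall strategy (induction on the ambient dimension, reduction via Lemma \ref{prop proj reg} to a prepared cell decomposition, lifting a cone structure from $\C_{n-1}(R)$ and rescaling by the monomial data of (\ref{eq prep})) is the same as the paper's. But there are two genuine gaps, and the first is the technical heart of the theorem. Condition (\ref{item_dfn_phi_decreasing}) of Definition \ref{dfn conical} requires each $\varphi_i(t;x)$ to decrease to zero in $t$, for every $x$ in the link; you propose to get this ``from the signs of the exponents $r$ after refining the cell decomposition so that each monomial is genuinely monotone along the radial direction.'' This is circular: the ``radial direction'' is the family of curves $t\mapsto h(t;x)$, which exists only once the homeomorphism has been built, and the homeomorphism is built from the very cell decomposition you propose to refine. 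Moreover the sign of $r$ controls nothing by itself, since along such a curve both $|y-\theta(x)|$ and the coefficient $a(x)$ vary; and although each one-variable subanalytic function is eventually monotone, the threshold depends on $x$ and degenerates toward the boundary of the link. The paper resolves this by strengthening the induction hypothesis: statement $(\textrm{A}_n)$ carries the extra clause (\ref{item_tilda_decreasing}), the quasi-monotonicity inequality (\ref{eq decroissance fn up to contant}), for a prescribed finite family of auxiliary functions, and the induction step identifies exactly which $(n-1)$-variable functions (the $\kappa_k$ of (\ref{eqdefkappak}), the widths $\eta_{j+1}-\eta_j$, the $\min(a_k;1)$) must be fed into that clause so that the $n$-variable data becomes quasi-monotone along the lifted rays. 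Without this bookkeeping the decreasingness of the $\varphi_i$ --- and even the Lipschitz character of the lifted homeomorphism, which in the paper rests on (\ref{eq decroissance fn up to contant}) applied to $\eta_{j+1}-\eta_j$ --- is unproved.

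The second gap is the gluing. You build the cone structure cell by cell through graph maps and then assert that the pieces can be matched over the link ``after a further compatible choice''; this is precisely the step that can destroy the Lipschitz property and the tame-basis estimates, and it is not carried out. The paper avoids it by constructing a single ambient $x_1$-preserving homeomorphism of $\C_n(R)$: on each band between consecutive graphs $\eta_j\leq\eta_{j+1}$ it interpolates affinely via $\nu(q)=(y-\eta_j(x))/(\eta_{j+1}(x)-\eta_j(x))$, so continuity across cells is automatic and the distinguished last form $\lambda_{\mu-1,E}$ can be defined globally by its vanishing on the tangent spaces of the interpolated graphs $E_a$. (Note also that the normalization $d(x_0;h(t;x))=t$ is obtained in the paper at the outset, by replacing $X$ with $\hat X=\{(x_1;x):|x|=x_1\}\subset\C_{n+1}(R)$, rather than by an a posteriori rescaling.) In short, your proposal correctly locates the shape of the argument but omits the mechanism --- the strengthened inductive clause and the choice of auxiliary functions --- that makes the induction close.
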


\begin{proof}
 We shall consider  sets $A\subset
\R^n$ as families parameterized by $x_1$ and write $A^\ep$ for the "fiber" at $\ep \in \R$, that is to say:
$$A^\ep:=\{x \in \R^{n-1}: (\ep;x) \in A\}.$$


 We will
actually prove by induction on $n$ the following statements.

\bigskip

{\bf$(\textrm{A}_n)$} Let $X_1,\dots,X_s$ be finitely many subsets
of $\C_n(R)$ and let $\xi_1,\dots,\xi_l$ be some bounded functions.

There exist  positive real numbers $R$ and  $\ep$, together with a
Lipschitz $x_1$-preserving homeomorphism
$$h:(0;\ep) \times B^{n-1}(0;R) \to \C_n(R)  \setminus \{0\},$$   such that
 for
every  $j\in \{1,\dots,s\}$, we can find some positive functions
$\varphi_{1,j}, \dots,\varphi_{\mu_j-1,j}$ ($\mu_j:=\dim X_j$) on
$(0;\ep) \times X_{j,reg}^\ep $ with:
\begin{enumerate}\item
$h((0;\ep) \times X_{j}^\ep)=X_{j} \cap \{0< x_1<\ep\} $
\item\label{item_phi_decreasing} The $\varphi_{i,j}(t;x)$'s are
decreasing to zero as $t$ goes to zero, for any $x \in X_{j,reg}^\ep$
  \item\label{item_bounded_below} The $\varphi_i(t;x)$'s
are bounded below  on any closed set disjoint from $\{t= 0\}$
\item\label{item_metric_conical}
 There is
 a tame basis
$ \lambda_{1,j},\dots,\lambda_{m-1,j}$ of $X^\ep _{j,reg}$ such that if $\theta_{i,j}:= h^{-1*}( \varphi_{i,j}\cdot \lambda _{i,j}) $ then $(h^{-1*}dt; \theta_{1,j};\dots;\theta_{m-1,j})$ is a tame basis of a dense subset of $X_{j,reg}$.
\item \label{item_tilda_decreasing} There is a constant $C$ such
that for any $i \leq l$ and any $ 0< \tau  \leq u \leq t$ we
have:\begin{equation}\label{eq decroissance fn up to
contant}C_\tau \xi_i(h(\tau;x)) \leq \xi_i(h(u;x)) \leq C \xi_i(h(t;x)).\end{equation} for some positive constant $C_\tau$.
 \end{enumerate}

\bigskip

Before proving these statements, let us make it clear that this
implies the desired result. Let $X\subset \R^n$. We can assume that $0\in X$ and  work nearby the origin.  The set $$\hat{X}:=\{(x_1;x) \in \R \times X: |x|=x_1\} $$ is a subset of $\C_{n+1}(R)$ (for $R>1$) to
which we can apply {\bf$(\textrm{A}_{n+1})$}. Observe that
$\hat{X}$ is bi-Lipschitz equivalent to $X$. This
means that it is enough to check the properties $(1-3)$ of
definition \ref{dfn conical} for $\hat{X}$. But they are implied
by $(\ref{item_phi_decreasing})$, $(\ref{item_bounded_below})$ and
$(\ref{item_metric_conical})$ of {\bf$(\textrm{A}_{n+1})$}.

The assertion (\ref{item_tilda_decreasing}) is not necessary to
prove that $X$ is Lipschitz conical. It is assumed so as to
perform the proof of (\ref{item_phi_decreasing}) during the
induction step.
\bigskip

As {\bf$(\textrm{A}_n)$} obviously holds in the case where $n=1$
($h$ being the identity map), we fix some $n>1$. We fix some subsets $X_1,\dots,X_s$ of $\C_n(R)$, for $R>0$,  and
some subanalytic bounded functions $\xi_1,
\dots,\xi_l:\C_n(R)\to\R$.

 \medskip



\medskip

Apply Lemma \ref{prop proj reg} to the family constituted by the
$X_i$'s and the union of the zero loci of the $\xi_i$'s.  We get a
$x_1$-preserving bi-Lipschitz map  $h:\C_n(R)\to \C_n(R)$ and a
cell decomposition $\D$ such that $(1)$, $(2)$, and $(3)$ of the
latter lemma hold. As we may work up to a $x_1$-preserving
 bi-Lipschitz map we will identify $h$ with the identity map. Hence, thanks to $(3)$ of the latter Lemma, we
may assume that the functions $\xi_i $'s
 are $\sim$ to a function like in (\ref{eq prep}).


Let $\Theta$  be a  cell of $\D$ in $\C_n (R)$ which is the graph of a
 function $\eta:\Theta' \to \R$, with $\Theta'\in \D$.  By $(2)$ of Lemma
\ref{prop proj reg}, $\eta$ is then necessarily a  Lipschitz
function. Consequently, it may be extended to a
Lipschitz function on the whole $\C_{n-1}(R)$ whose graph still lies in $\C_n (R)$. Repeating this for all the
cells of $\D$ which are graphs over a cell of $\D$ in $\R^{n-1}$
we get a family of functions $\eta_1,\dots,\eta_v$. Using  the
operators $\min$ and $\max$ we may transform this family in an
ordered one (for $\leq$), so that, keeping the same notations for
the new family, we will assume that it satisfies $\eta_1\leq \dots
\leq \eta_v$.


Fix an integer $1 \leq j < v $ and a connected component $B$ of
$(\eta_j;\eta_{j+1})$. Let $\Theta$ be a cell of $\D$ and set for simplicity $D:=\Theta \cap  B$. 

Up to
constants, the functions $\xi_k $'s
 are like in (\ref{eq prep}) on $D$,  i. e. there exist $(n-1)$-variable  functions on $D$, say $\theta_k$ and $a_k$,
 $k=1,\dots,m$ with $\theta_k$ Lipschitz such that: $$\xi_k(x;y) \sim (y-\theta_k(x))^{\alpha_k}
a_k(x),$$ for $(x;y)\in D \subset \R^{n-1}\times \R$.

We shall apply the induction hypothesis to all the $a_k$'s
(obtained for all such sets $D$). Unfortunately this is not enough
if one wants to get  that  the $\xi_k$'s satisfy $(5)$, due to the
term $(y-\theta_k)$ in the decomposition of the $\xi_k$'s just
above. Therefore,  before applying the induction hypothesis, we
need to complete the family to which we will apply $(5)$ of the
induction hypothesis by some extra bounded $(n-1)$ variable
functions  that we are going to introduce.

As the zero loci of the $\xi_k$'s are included in the graphs of the $\eta_i$'s, we
have on  $D$ for every $k$, either
  $\theta_k \leq \eta_j$ or $\theta_k \geq \eta_{j+1}$. We will assume for simplicity that $\theta_k \leq \eta_j$.

This means that for $(x;y)\in D \subset \R^{n-1}\times \R$:
\begin{equation}\label{eq min}\xi_k(x;y) \sim \min ((y-\eta_j(x)) ^{\alpha_k} a_k(x)
;(\eta_j-\theta_k(x)) ^{\alpha_k}  a_k(x)),\end{equation}  if
$\alpha_k$ is negative and
\begin{equation}\label{eq max}\xi_k(x;y) \sim \max ((y-\eta_j(x)) ^{\alpha_k} a_k(x)
;(\eta_j-\theta_k(x)) ^{\alpha_k}  a_k(x)),\end{equation} in the
case where $\alpha_k$ is nonnegative.

 First, consider the following functions:
\begin{equation}\label{eqdefkappak}\kappa_k(x):=(\theta_{k}(x)-\eta_j(x))^{\alpha_k} a_k(x), \qquad k=1,\dots ,l.\end{equation}
For every $k$,  the function $\kappa_k$ is  bounded for it is
equivalent to the function $\xi_k(x;\eta_{j}(x))$ which is bounded since $\xi_k$ is. 
Complete the family $\kappa$ by adding the functions $(\eta_{j+1}-\eta_j)$ as well as the functions  $\min(a_k;1)$. The union of all these families (the just obtained family $\kappa$ depends on $D$), obtained for every  such set $D$ (intersection of a
connected component of  $(\eta_j;\eta_{j+1})$, for some $j$, with some cell $D$ of $\D$) provides us a
finite collection of  functions $\sigma_1,\dots,\sigma_p$.

We now turn to the construction of the desired homeomorphism.
The cell decomposition $\D$ induces a cell decomposition of $\R^{n-1}$. Refine it into a cell decomposition $\E$ compatible with the  zero loci of the functions $(\eta_j-\eta_{j+1})$. Apply induction hypothesis to the family constituted by the
cells of $\E$ which lie in $\C_{n-1}(R)$. This
provides a  homeomorphism   $$h:(0;\ep) \times B^{n-2}(0;R) \to
\C_{n-1}(R) .$$

We first are going to lift $h$ to a homeomorphism
$\tilde{h}: (0;\ep) \times B^{n-1}(0;R) \to \C_{n}(R) $.

 Thanks to
the induction hypothesis, we may assume that the functions
$\sigma_1,\dots,\sigma_p$ 
 satisfy (\ref{eq decroissance fn up to
contant}).

We lift $h$ as follows. For simplicity we define $\eta_j'$ as the
restriction of $\eta_j$ to $\C_n(R)\cap \{x_1=\ep\}$. On
$(\eta_j;\eta_{j+1})$ we set  
$$\nu(q):= \frac{y-\eta_{j}(x)}{\eta_{j+1}(x)-\eta_{j}(x)},$$
where $q=(x;y) \in \R^{n-1}\times \R$. Then, for  $(t;q) \in  (0;\ep) \times (\eta_j';\eta_{j+1}')$
$$\widetilde{h}(t;q):=(h(t;x);\nu(q)(\eta_{j+1}(h(t;x))-\eta_j(h(t;x))) +\eta_j(h(t;x))).$$

In virtue of  the induction hypothesis,  the inequality (\ref{eq
decroissance fn up to contant}) is fulfilled by the functions
$(\eta_{i+1}-\eta_i)$.
 Therefore, as $h$ is Lipschitz, we see that $\widetilde{h}$ is Lipschitz as
 well. As $(1)$ holds by construction for every cell, it holds for
 all the $X_j$'s.

\bigskip

We now turn to define the functions $\varphi_{i,j}$. Actually, as
all the $X_i$'s are unions of cells, it is enough to carry out the
proof on every cell $E\in \E$, i. e. to define some functions
$\varphi_{1,E},\dots,\varphi_{\mu-1,E}$ (where $\mu=dim \, E$),
decreasing to $0$ with respect to $t$, and a tame basis
$\lambda_{1,E},\dots,\lambda_{\mu-1,E}$ such that the family $(\widetilde{h}^{-1*}dt; \theta_{1,E};\dots;\theta_{\mu-1,E})$,
where $\theta_{i,E}:=\widetilde{h}^{-1*}(\varphi_{i,E}\cdot\lambda_{i,E})$, is a tame basis of $E$.

Indeed, the desired functions $\varphi_{i,j}$ can then be defined as the
functions induced by all the functions $\varphi_{i,E}$ (defined on
$\widetilde{h}^{-1}(E)$), for all the cells $E$ of dimension
$\mu_j$ included in $X_j$ (as pointed out before definition
\ref{dfn conical} only the generic values of $\varphi_{i,j}$
actually matter).

Fix a cell $E\subset \C_n(R)$, set $E':=\pi(E)$ and $\mu':=dim \,E'$,
where $\pi:\C_n(R)\to \C_{n-1}(R)$ is the obvious orthogonal
projection. Let now $\varphi_{1,E'},\dots,\varphi_{\mu'-1,E'}$ be
the functions given by the induction hypothesis. We distinguish two cases.

\underline{{\it First case}}: $\mu'=\mu-1$ (where
$\mu=dim \,E$).  Let us set:
$$\varphi_{i,E}(t;x):=\varphi_{i,E'}(t;\pi(x)).$$
 As $\mu'=\mu-1$, the cell  $E$ is included in
$[\eta_{j|E'};\eta_{j+1|E'}]$, for some $j\leq \lambda$, and we
also set:
\begin{equation}\label{eq_def_varphi_i}
 \varphi_{\mu-1,E}(t;x):=\frac{\eta_{j+1}(h(t;x))-\eta_j(h(t;x))}{\eta_{j+1}(h(\ep;x))-\eta_j(h(\ep;x))},
\end{equation}
 Let us show that
these functions satisfy  $(\ref{item_phi_decreasing})$ and $(\ref{item_bounded_below})$.

 Recall that we applied $(\ref{item_tilda_decreasing})$ of the induction hypothesis to the function $(\eta_{j+1}-\eta_j)(x)$.
If a function $\xi$ satisfies
$(\ref{eq decroissance fn up to contant})$ then $$\xi(h(s;x)) \sim
\inf_{s \leq t< \ep} \xi(h(t;x)),$$ and consequently $\xi\circ h$ is
$\sim$ to an increasing function. 

 Therefore,
 changing $\varphi_{i,E}$ for an
equivalent function if necessary, we may assume that it is
increasing with respect to $t$. As the graphs of the $\eta_i$'s are
included in $\C_n(R)$, the $\eta_i$'s must vanish at the origin.
Consequently $\varphi_{\mu-1,E}$ tends to zero, as $t$ goes to
zero for any $x \in E$, which yields
$(\ref{item_phi_decreasing})$.

   As $(\eta_{j+1}-\eta_j)$ satisfy (\ref{eq decroissance fn up to contant}), the $\varphi_i$'s are bounded away from zero on $ (\tau;\ep)\times E^\ep$ for every $0< \tau <\ep $, showing (\ref{item_bounded_below}).

\bigskip

We now are going to define our tame basis of $1$-forms $\lambda_{i,E}$ in order to prove (\ref{item_metric_conical}).

Denote by $\pi_E:E \to E'$ the restriction of the orthogonal
projection.  Let us now set on $(0;\ep)  \times
E^\ep$
\begin{equation}\label{eq_lambda_iE}\lambda_{i,E}:=\pi_{|E}^*
\lambda_{i,E'}.\end{equation}




 Then set for $x\in E'$
and $a \in [0;1]$:
$$\eta_{j,a}(x)=(\eta_{j+1}(x)-\eta_j(x))a +\eta_j(x).$$

Denote by $E_a$ the graph of $\eta_{j,a}$.

Put now  $$\lambda_{\mu-1,E}(q)(u)=0,$$ if $u$ is tangent to
$(E_{\nu(q)})^\ep$, and   finally set
$$\lambda_{\mu-1,E}(q)(e_n)=1.$$

 As the
 $\eta_{i,a}$ are Lipschitz with a Lipschitz constant bounded with respect to $a$, the angle between $e_n$ and the tangent to the
  graph of $\eta_{i,\nu(x)}$ is bounded below away from zero,  and therefore the
  norm of $\lambda_{\mu-1,E}$ is bounded.    The Lipschitz character of the $\eta_{j,a}$'s also implies that the family $\lambda_{1,E},\dots,\lambda_{\mu-1,E}$ is a tame basis of $E^\ep$.

   By definition of $\widetilde{h}$ and $\varphi_{\mu-1,E}$ we have $d_{(t;x)} h(e_n)\sim \varphi_{\mu-1,E}(t;x)$ so that:
    $$|\widetilde{h}^{-1*} \lambda_{i,E}|\sim \frac{1}{\varphi_{\mu-1,E}}.$$

 The forms  $\theta_{i,E}:=\widetilde{h}^{-1*}(\varphi_{i,E}\cdot \lambda_{i,E})$ are thus bounded. For the same reasons as for the $\lambda_i$'s, the family
$(\widetilde{h}^{-1*}dt; \theta_{1,E};\dots;\theta_{\mu-1,E})$
 is a tame basis of $E$.

%
%
%

\medskip

\underline{\textit{Second case}}: $\mu=\mu'$.  In this case we only have to define $(\mu'-1)$ functions and $(\mu'-1)$ $1$-forms.
This may be done like in the first case (like in (\ref{eq_def_varphi_i}) and (\ref{eq_lambda_iE})). This is indeed much easier to check that  $(2)$ $ (3)$ and $(4)$ hold,
 since,
as $\pi_E$ is bi-Lipschitz, the required properties which are true
downstairs for $h$ thanks to the induction hypothesis obviously
continue to hold upstairs for $\tilde{h}$. This completes the proof of $(2)$ $ (3)$ and $(4)$.



 Finally, we
have to check that the $\xi_k$'s fulfill (\ref{eq decroissance fn
up to contant}) for $\widetilde{h}$. As the $\xi_k$'s are  bounded
this is enough to check it for the functions $\min(\xi_k;1)$. We
check it on a given cell $E\in \E$. Fix an integer $1 \leq k \leq
l$. By the induction hypothesis we know that the $\kappa_i$'s (see
(\ref{eqdefkappak})) satisfy (\ref{eq decroissance fn up to
contant}). Remark that the function $\nu(\widetilde{h}(t;q))$ is
constant with respect to $t$.

   Observe that by (\ref{eq min}) and (\ref{eq max})
  it is enough to show that the functions $\min((y-\eta_j(x))^{\alpha_k} a_k(x);1)$ and the functions $\min(|\theta_k-\eta_j|(x) ^{\alpha_k} a_k(x);1)$ satisfy (\ref{eq decroissance fn
up to contant}).
 As for the latter functions this follows from the
induction hypothesis and choice of the $\kappa_i$'s, we only need
to focus on the former ones.

For simplicity we set $$F(x;y):=(y-\eta_j(x))^{\alpha_k} a_k(x),$$
and $$G(x):=(\eta_{j+1}-\eta_j)(x)^{\alpha_k} \cdot a_k(x).$$

We have to  show the desired inequality for $\min(F;1)$. We have:
\begin{equation}\label{eq F G}F(q)=\nu(q)^{\alpha_k} \cdot G(x),\end{equation}
where again $q=(x;y)$.

As $\nu(\widetilde{h}(t;q))$ is
constant with respect to $t$, this
implies that:
\begin{equation}\label{eq2 F G}F(\widetilde{h}(t;q))=\nu(q)^{\alpha_k} \cdot G(h(t;x))\end{equation}

We assume first that $\alpha_k$ is negative. Thanks to the
induction hypothesis we know that for $0< \tau \leq u \leq t$:  $$C_\tau  \min (G(h(\tau;x));1)\leq \min(G(h(u;x));1)\leq C \min
(G(h(t;x));1),$$
for some positive constants $C_\tau,C$. 

But this implies (multiplying by $\nu^{\alpha_k}$ and applying
(\ref{eq F G}) and (\ref{eq2 F G})) that
$$C_\tau \min(F(\widetilde{h}(\tau;q));\nu^{\alpha_k}(q);1)\ \leq  \min(F(\widetilde{h}(u;q));\nu^{\alpha_k}(q);1)\leq C \min (F(\widetilde{h}(t;q));\nu^{\alpha_k}(q);1). $$
But, as $\alpha_k$ is negative,
$\min(F;\nu^{\alpha_k};1)=\min(F;1)$ and  we are done.

We now assume that $\alpha_k$ is nonnegative. This implies that $F $ is a bounded function (by (\ref{eq max})). Moreover, by (\ref{eq
max}), it is enough to show the desired inequality for $F $, 
and thanks to (\ref{eq F G}), it actually suffices to show it for $G$.
As $G$ is one of the $\kappa_i$'s,  the result follows from the induction hypothesis. This yields
(\ref{eq decroissance fn up to contant}) for $\widetilde{h}$,
establishing (\ref{item_tilda_decreasing}).
%
%
\end{proof}

\begin{rem}\label{rem_conical}
\label{rmk_conic_structure} \begin{enumerate} \item  As in
\cite{vlt}, the $\varphi_i$'s could be expressed as quotients of
sums of products of powers of the monomial $t$ and distances to
some subsets of the link.\item Observe that in the proof of the above
the induction hypothesis is stronger than the theorem since we
have proved the Lipschitz conic structure of finitely many sets
simultaneously and that the homeomorphism is defined on the
ambient space as well.
\item \label{rem_conical_item3}
   Denote by $\rho_X$
    the Riemannian
   norm induced by the ambient space on $ X_{reg}$. Condition $(3)$ of definition \ref{dfn conical}  clearly implies the following:
\begin{equation}\label{eq_dfn_conical} h^*\rho_{X}^2\approx
dt^2+\sum_{i=1} ^{m-1}\varphi_i ^2 (t;x) \cdot
\lambda_i^2(x),\end{equation} for $(t;x)$ in a dense subset of
$ (0;\ep)\times L(x_0;X_{reg}).$
As the $\varphi_i$'s are bounded below and above far away from $\{t=0\}$
we see that the above mapping $h$ is thus a quasi-isometry  on any closed subset of $X_{reg}$ disjoint from $ \{t=0\}$.
\end{enumerate}
\end{rem}

\medskip

\begin{thm}\label{thm_hardt}
Let $x_0 \in X\subset \R^n$ and set $\rho(x):=|x-x_0|$. There exists $\ep>0$ such that $\rho$ is bi-Lipschitz trivial above
$[\nu;\ep]$ for any $0<\nu <\ep$, i. e. for every $\nu>0$ we can
find a bi-Lipschitz homeomorphism
$$h: \rho^{-1}([\nu;\ep]) \to \rho^{-1}(\ep)\times
[\nu;\ep], $$ with $\pi_2(h(x))=\rho(x)$, where
$\pi_2:\rho^{-1}(\ep)\times [\nu;\ep] \to [\nu;\ep]$ is the
projection onto the second factor.
\end{thm}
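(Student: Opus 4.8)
The plan is to read this off the Lipschitz conic structure established in Theorem~\ref{thm Lipschitz conic structure}. Apply that theorem at $x_0$, but at a radius $\ep_0$ slightly larger than the $\ep$ we are after (a harmless bookkeeping device so that the interval may be closed up at $\ep$): this produces a Lipschitz homeomorphism
$$h:(0;\ep_0)\times L(x_0;X)\to X\cap B^n(x_0;\ep_0)\setminus\{x_0\},\qquad d(x_0;h(t;x))=t,$$
together with the functions $\varphi_1,\dots,\varphi_{m-1}$ of Definition~\ref{dfn conical}. By Remark~\ref{rem_conical}\,(\ref{rem_conical_item3}), the $\varphi_i$'s being bounded below and above away from $\{t=0\}$, the map $h$ is a quasi-isometry on every closed subset of its domain disjoint from $\{t=0\}$ (a standard closure argument upgrading the statement from $X_{reg}$ to $X$). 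In particular, fixing $0<\nu<\ep$, the restrictions of $h$ to $[\nu;\ep]\times L(x_0;X)$ and of $h^{-1}$ to $\rho^{-1}([\nu;\ep])$ are bi-Lipschitz homeomorphisms, with constants allowed to depend on $\nu$.

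Now build the trivialization. Since $d(x_0;h(t;x))=t$, one has $h(\{t\}\times L(x_0;X))=\rho^{-1}(t)$ for every $t$, hence $h([\nu;\ep]\times L(x_0;X))=\rho^{-1}([\nu;\ep])$ and $h(\{\ep\}\times L(x_0;X))=\rho^{-1}(\ep)$. Define
$$H:\rho^{-1}([\nu;\ep])\to \rho^{-1}(\ep)\times[\nu;\ep],\qquad H(x):=\big(h(\ep;\xi)\,;\,\rho(x)\big),$$
where $(t;\xi):=h^{-1}(x)$, so that necessarily $t=\rho(x)$ and $\pi_2(H(x))=\rho(x)$, which is the required normalization. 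Then $H$ is the composite of $h^{-1}\colon\rho^{-1}([\nu;\ep])\to[\nu;\ep]\times L(x_0;X)$ — bi-Lipschitz by the previous paragraph — with the map $(t;\xi)\mapsto(h(\ep;\xi);t)$; the latter is bi-Lipschitz because $h|_{\{\ep\}\times L(x_0;X)}\colon L(x_0;X)\to\rho^{-1}(\ep)$ is bi-Lipschitz (again Remark~\ref{rem_conical}\,(\ref{rem_conical_item3}), $\{\ep\}\times L(x_0;X)$ being closed and disjoint from $\{t=0\}$) and the interchange of the two factors is an isometry for the product metrics. Hence $H$ is a bi-Lipschitz homeomorphism, proving the theorem with the value $\ep$ chosen above.

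The substantive work is all carried by Theorem~\ref{thm Lipschitz conic structure}; the present statement is essentially a repackaging of it as a triviality statement for the distance function $\rho$, so there is no serious obstacle. The only point worth flagging is that the bi-Lipschitz constant of $H$ necessarily degenerates as $\nu\to 0$, since the $\varphi_i$'s collapse near $\{t=0\}$; this is exactly why the trivialization cannot be pushed down to the singular fiber $\rho^{-1}(0)$, in accordance with the classical behaviour of Hardt-type triviality, and why the statement only asserts triviality over each $[\nu;\ep]$ with $\nu>0$ fixed.
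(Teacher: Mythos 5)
Your overall strategy is the one the paper itself points to: the paper gives no self-contained argument for Theorem \ref{thm_hardt}, but states that it is a special case of the bi-Lipschitz Hardt theorem of \cite{vlt} and that it is ``easy to derive from the proof of Theorem \ref{thm Lipschitz conic structure}''. Your construction of $H$ from $h$, the normalization $\pi_2\circ H=\rho$, and the bookkeeping with $\ep<\ep_0$ are all fine, and the product decomposition you write down is exactly the intended one.

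The gap is in the step where you declare $h^{-1}$ (and the inverse of $h(\ep;\cdot)$) Lipschitz on $\rho^{-1}([\nu;\ep])$ by invoking Remark \ref{rem_conical}\,(\ref{rem_conical_item3}). What that remark actually provides is the almost-everywhere estimate (\ref{eq_dfn_conical}) on $h^*\rho_X^2$, i.e.\ two-sided bounds on the differential of $h$ on a dense subset of the regular part; the word ``quasi-isometry'' there refers to the Riemannian (length) metric. To deduce $|h^{-1}(x)-h^{-1}(x')|\leq C|x-x'|$ in the sense of the paper's definition of Lipschitz --- ambient Euclidean distance --- you must integrate the differential bound along paths inside $X$, and this only bounds $|h^{-1}(x)-h^{-1}(x')|$ by a constant times the \emph{inner} distance from $x$ to $x'$ in $\rho^{-1}([\nu;\ep])$. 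Inner and outer metrics of a subanalytic set are in general only H\"older-equivalent, not Lipschitz-equivalent, so this is not a ``standard closure argument''; it is precisely the inner-versus-outer issue that this circle of results is designed to handle. That is why the paper derives the theorem from the \emph{proof} of Theorem \ref{thm Lipschitz conic structure} --- where $\widetilde{h}$ is an explicit $x_1$-preserving lift of Lipschitz cell data whose inverse is controlled directly away from the origin --- or cites \cite{vlt}. To repair your argument you would either have to reproduce that part of the construction, or prove that $\rho^{-1}([\nu;\ep])$ is quasiconvex (inner distance bounded by $C_\nu$ times outer distance); neither follows from the statement of Theorem \ref{thm Lipschitz conic structure} alone. (Your passage from $X_{reg}$ to $X$ also tacitly uses the density of $X_{reg}$ in $X$, which holds but should be said.)
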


This theorem is a particular case of  the bi-Lipschitz version of
Hardt's Theorem proved in \cite{vlt}. This is also easy to derive
from the proof of Theorem \ref{thm Lipschitz conic structure}. The subanalycity
of the isotopy will be useful in section \ref{sect_hom} (recall
that, except the differential forms, everything is implicitly assumed to be subanalytic).

\medskip


\section{$L^1$ cohomology groups}\label{sect_weakly_l1}
 In this section $M\subset \R^n$ stands for a  bounded
 (subanalytic)
submanifold. Such a manifold has a natural structure of Riemannian
manifold giving rise to a measure on $M$ that we denote $dvol_M$.
Below, the word $L^1$ will always mean $L^1$ with respect to this
measure.

\subsection*{The $L^1$ cohomology.} As we said in the
introduction, the {\bf $L^1$ forms on $M$} are the forms $\omega$
on $M$ satisfying (\ref{eq_l1_condition}). We denote by
$(\Omega_{(1)} ^\bullet(M);d)$ the differential complex constituted by the
$C^\infty$ $L^1$ forms $\omega$ such that $d\omega$ is $L^1$.

The $L^1$ cohomology groups, denoted $H_{(1)}^j(M)$ are the
cohomology groups of the differential complex $(\Omega_{(1)}
^\bullet(M);d)$.

We endow this de Rham complex with the natural norm:
$$|\omega|_1:=\int_M |\omega|\, dvol_M+ \int_M |d\omega|\,d vol_M.$$

In this section we prove some preliminary results about $L^1$
cohomology that we shall need to establish our de Rham theorem in
the next section.

%

\subsection{$L^1$ cohomology with compact support.}\label{sect_cohomo_support_compact} We now define the $L^1$ forms with compact support. We prove some basic facts, relying on the bi-Lipschitz
triviality result presented in Theorem \ref{thm_hardt}. Let us
point out that our notion of forms with compact support is
slightly different that the usual one since we  allow the forms to
be nonzero near the singularities of $cl(M)$. The support is
indeed a subset of $cl(M)$.

Let $M\subset \R^n$ be a submanifold and let $X:=cl(M)$.

\begin{defs}\label{dfn_support}
Let $U$ be an open subset of $M$ and let $V\supset U$ be an open subset of $X$. Let $\omega$ be a differential form on $U$.
   The {\bf support of $\omega$ in $V$} is the closure in $V$ of the set constituted by the  points of $U$  at which $\omega$ is nonzero.

  We denote by $\Omega^j_{(1),V}(U)$
the  $C^\infty$  $j$-forms $\omega$ on $U$ with  compact
support in $V$  such that $\omega$ and $d\omega$ are $L^1$,  and
by $H^j_{(1),V}(U)$ the resulting cohomology groups.
\end{defs}

For instance $\Omega^j_{(1),X}(M)$ stands for the $L^1$ $j$-forms (with an $L^1$ derivative) having compact support in $X$.
Such forms have to be zero in a neighborhood of infinity (in $M$). However, they need not to be zero
near the points of $\delta M$.

\subsection{Weakly differentiable forms.} The homeomorphism that we constructed in Theorem \ref{thm Lipschitz conic
structure} is not smooth.  Thus, we will need to work with weakly differentiable forms, 
just differentiable as currents. Therefore, the first step is to
prove that the bounded  weakly differentiable forms give rise to
the same cohomology theory. We will follow an argument similar to
the one used by Youssin in \cite{y}.

Given    a smooth manifold $M$ (possibly with boundary), we denote
by $\Omega_{0,\infty} ^{j}(M)$ the set of $C^\infty$ $j$-forms on $M$
with compact support (in $M$).

\begin{dfn}
Let $U$ be an open subset of $\R^n$. A differential
$j$-form $\alpha$ on $U$ is called {\bf weakly differentiable} if
there exists a $(j+1)$-form $\omega$ such that for any form
$\varphi\in \Omega_{0,\infty} ^{n-j-1} (U)$:$$\int_{U} \alpha \wedge
d\varphi =(-1)^{j+1}\int_{U} \omega \wedge \varphi.$$ The form
$\omega$ is then called {\bf the weak exterior derivative of
$\alpha$} and we write $\omega=\overline{d} \alpha$.   A
continuous differential $j$-form $\alpha$ on $M$ is called {\bf
weakly differentiable} if it  gives rise to   weakly
differentiable forms via the coordinate systems  of $M$.
\end{dfn}

We denote by $\overline{\Omega}_{(1)} ^{j}(M)$  the set of
measurable weakly differentiable $j$-forms, locally bounded in $M$, which are  $L^1$ and
which have an  $L^1$ weak exterior derivative. Together with
$\overline{d}$, they constitute  a cochain complex. We denote by
$\overline{H}_{(1)}^j (M)$  the resulting cohomology groups.

We endow this de Rham complex with the corresponding  norm:
$$|\omega|_1:=\int_M |\omega|\, dvol_M+ \int_M |\overline{d}\omega|\,d vol_M.$$

Similarly, we may introduce the theory of {\bf weakly
differentiable $L^1$ forms with compact support in $V$} that we shall
denote $\overline{\Omega}^j_{(1),V}(U)$ and
$\overline{H}^j_{(1),V}(U)$ (see definition \ref{dfn_support}).

In the case of compact smooth manifolds it is easily checked that
the two cohomology theories coincide:

\begin{lem}\label{lem_l1_manifold}If $K$ is a smooth  compact manifold (possibly with boundary) then:
 \begin{equation}\label{eq isom bar}\overline{H}_{(1)} ^j(K) \simeq
H^j(K).\end{equation}\end{lem}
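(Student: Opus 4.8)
The plan is to show that the inclusion of complexes
$\Omega_{(1)}^\bullet(K)\hookrightarrow\overline\Omega_{(1)}^\bullet(K)$ is a quasi-isomorphism, and that the left-hand side computes ordinary de Rham cohomology $H^j(K)$ by the classical de Rham theorem for compact manifolds with boundary (here the $L^1$ condition is vacuous since $K$ is compact, so $\Omega_{(1)}^\bullet(K)$ is just the usual complex of smooth forms and $\overline{d}$ agrees with $d$ on smooth forms by Stokes' formula, using that test forms in $\Omega^{n-j-1}_{0,\infty}(K)$ have compact support away from $\partial K$). Thus the whole content is the surjectivity and injectivity of $H(\Omega_{(1)}^\bullet(K))\to\overline{H}_{(1)}^\bullet(K)$, which I would obtain by a regularization (mollification) argument following Youssin \cite{y}: given a weakly differentiable, locally bounded, $L^1$ form $\alpha$ with $\overline{d}\alpha$ $L^1$, convolve with a smooth approximate identity in each coordinate chart and patch with a partition of unity subordinate to a finite atlas of $K$, producing smooth forms $\alpha_\varepsilon$ with $\alpha_\varepsilon\to\alpha$ and $d\alpha_\varepsilon\to\overline{d}\alpha$ in the $L^1$ norm $|\cdot|_1$, and such that $\alpha-\alpha_\varepsilon$ is exact in $\overline\Omega_{(1)}^\bullet$ up to a controlled error.

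More precisely, I would first establish the key analytic lemma: mollification commutes with the weak exterior derivative, i.e. $d(\alpha*\varphi_\varepsilon)=(\overline{d}\alpha)*\varphi_\varepsilon$ on the region where the convolution is defined, so $\alpha_\varepsilon\in\Omega_{(1)}^\bullet$ and $(\alpha_\varepsilon,d\alpha_\varepsilon)\to(\alpha,\overline{d}\alpha)$ in $L^1$; this is a standard consequence of Friedrichs' lemma once one knows the coefficients of $\alpha$ are $L^1_{loc}$ with $L^1_{loc}$ distributional derivatives after exterior differentiation. To turn $L^1$-approximation into a cohomology isomorphism I would invoke the fact that both complexes are complexes of Banach (or Fréchet) spaces and that $\overline{d}$ has closed range, or more elementarily: if $\overline{d}\alpha=0$ then $\alpha_\varepsilon$ is a smooth form whose derivative $d\alpha_\varepsilon=(\overline{d}\alpha)*\varphi_\varepsilon=0$, hence $[\alpha_\varepsilon]\in H^j(K)$, and for $\varepsilon$ small $\alpha-\alpha_\varepsilon=\overline{d}\beta_\varepsilon$ for some $\beta_\varepsilon\in\overline\Omega_{(1)}^{j-1}(K)$ by a partition-of-unity homotopy formula (on a contractible chart $\alpha-\alpha_\varepsilon$ has a weak primitive via the Poincaré homotopy operator, which preserves local boundedness and $L^1$-ness), giving surjectivity of $H(\Omega_{(1)})\to\overline{H}_{(1)}$; injectivity is the dual statement, if a smooth closed form $\omega$ equals $\overline{d}\alpha$ for weakly differentiable $\alpha$, then mollifying $\alpha$ exhibits $\omega$ as $d(\text{smooth})$ up to a term handled again by the homotopy operator.

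The step I expect to be the main obstacle is the patching: the homotopy/primitive operators are local, so to deduce a global statement on $K$ one needs a Mayer--Vietoris or Čech--de Rham double-complex argument for these non-smooth complexes, and one must check at each stage that the operations (restriction to charts, multiplication by a smooth partition of unity, the Poincaré homotopy integral along rays) preserve membership in $\overline\Omega_{(1)}^\bullet$ — i.e. preserve weak differentiability, local boundedness, and $L^1$ integrability — and are compatible with $\overline{d}$. Near $\partial K$ one uses half-ball charts and the homotopy operator based at an interior point of the half-ball, which causes no difficulty because $K$ is compact and the metric is smooth up to the boundary, so all the bounds are uniform. Once these bookkeeping points are in place the isomorphism $\overline{H}_{(1)}^j(K)\simeq H^j(\Omega_{(1)}^\bullet(K))=H^j(K)$ follows, and since $K$ is compact the last equality is just the ordinary de Rham theorem.
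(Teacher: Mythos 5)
Your overall strategy is viable but it inverts the paper's logic and, as written, leaves its pivotal step unjustified. The paper's proof is much more direct: it runs the classical Bott--Tu argument on the complex $\overline{\Omega}_{(1)}^\bullet(K)$ itself --- this complex is invariant under smooth homotopies, so the Poincar\'e Lemma holds on smoothly contractible charts, and a good cover plus the Mayer--Vietoris (\v{C}ech--de Rham) machine then identifies $\overline{H}_{(1)}^j(K)$ with $H^j(K)$; compactness is used only to convert ``locally $L^1$'' into ``$L^1$''. It never passes through the smooth subcomplex. Your route instead proves first that $\Omega_{(1)}^\bullet(K)\hookrightarrow\overline{\Omega}_{(1)}^\bullet(K)$ is a quasi-isomorphism and then quotes the classical de Rham theorem. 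Note that this intermediate statement is essentially Proposition \ref{pro l1 isom smooth} of the paper, which is there \emph{deduced from} Lemma \ref{lem_l1_manifold}; you are therefore attacking the harder statement first.

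The concrete gap is the assertion that for $\varepsilon$ small one has $\alpha-\alpha_\varepsilon=\overline{d}\beta_\varepsilon$. Smallness of $|\alpha-\alpha_\varepsilon|_1$ does not imply exactness of a closed form: on $S^1$ the form $c\,d\theta$ is closed and has arbitrarily small $L^1$ norm, yet is not exact for $c\neq 0$. So you must show that the \emph{cohomology class} of $\alpha-\alpha_\varepsilon$ vanishes, and your proposed justification (local Poincar\'e primitives glued by a partition of unity) is precisely the Mayer--Vietoris/\v{C}ech--de Rham argument that you defer to the end as ``the main obstacle''. But once that argument is carried out for $\overline{\Omega}_{(1)}^\bullet$ you have already established $\overline{H}_{(1)}^j(K)\simeq H^j(K)$ directly, and the entire mollification layer becomes superfluous. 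There are two clean ways to repair your version: either (i) drop the mollification and run the \v{C}ech--de Rham argument on $\overline{\Omega}_{(1)}^\bullet$ against a finite good cover, checking that multiplication by a partition of unity and the radial homotopy operator preserve weak differentiability, boundedness and $L^1$-ness (this is the paper's proof); or (ii) replace naive chartwise convolution by a global de Rham--type regularization equipped with a chain homotopy $R_\varepsilon-\mathrm{id}=\overline{d}A_\varepsilon+A_\varepsilon\overline{d}$ on currents, so that $\alpha-R_\varepsilon\alpha=\overline{d}(A_\varepsilon\alpha)$ holds automatically for closed $\alpha$, and then verify that $A_\varepsilon$ preserves the class $\overline{\Omega}_{(1)}^\bullet$. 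As written, neither is in place, so the surjectivity (and dually the injectivity) of the comparison map is not yet proved.
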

\begin{proof}
The proof follows the classical argument. As in the  case of
smooth forms (see for instance \cite{bl}) it is enough to show
Poincar\'e Lemma. Both of the above cohomology theories  are
invariant under smooth homotopies. Any point of $K$ has a smoothly
contractible neighborhood. As $K$ is compact, locally $L^1$
implies $L^1$.
\end{proof}

We now are going to see that the isomorphism also holds in the noncompact
case:

\begin{pro}\label{pro l1 isom smooth}
Let   $M\subset \R^n$ be a $C^\infty$  submanifold and let $V$ open in $cl(M)$.   The inclusions $\Omega_{(1)}
^\bullet (M) \hookrightarrow \overline{\Omega}_{(1)} ^\bullet (M)$ and $\Omega_{(1),V}
^\bullet (V\cap M) \hookrightarrow \overline{\Omega}_{(1),V} ^\bullet (V \cap M)$ induce
isomorphisms between the cohomology groups.
\end{pro}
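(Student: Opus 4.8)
The plan is to prove this by a standard smoothing (regularization) argument, reducing to the local situation and then globalizing. The content of the proposition is that every bounded weakly differentiable $L^1$ form with $L^1$ weak derivative can be approximated, in the $|\cdot|_1$-norm, by genuinely smooth $L^1$ forms with $L^1$ derivative, and that two smooth such forms which are weakly cohomologous are smoothly cohomologous. The map on cohomology induced by inclusion is well defined and functorial; we must show it is bijective.

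First I would set up a convolution/mollification operator. Cover $M$ by finitely many subanalytic charts; on each chart, given $\alpha \in \overline{\Omega}_{(1)}^\bullet(M)$, form the mollified forms $\alpha_\epsilon := \alpha * \chi_\epsilon$ componentwise, where $\chi_\epsilon$ is a standard approximate identity. The key local facts are: (i) $\alpha_\epsilon$ is $C^\infty$; (ii) $\overline{d}(\alpha_\epsilon) = (\overline{d}\alpha)_\epsilon = d(\alpha_\epsilon)$, i.e. mollification commutes with the weak exterior derivative (this is the standard fact that convolution commutes with distributional differentiation); (iii) $\alpha_\epsilon \to \alpha$ and $(\overline{d}\alpha)_\epsilon \to \overline{d}\alpha$ in $L^1_{loc}$, hence, since $M$ is bounded and the forms are $L^1$, in $L^1$ up to the standard estimate $\|f*\chi_\epsilon\|_{L^1}\le \|f\|_{L^1}$. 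The local mollification must be patched together with a partition of unity subordinate to the chart cover; one checks that the error terms involving derivatives of the partition of unity are controlled because those derivatives are bounded, $\alpha$ is locally bounded, and $M$ has finite volume. This produces, for each $\alpha$, a sequence of smooth $L^1$ forms $\alpha_\epsilon \in \Omega_{(1)}^\bullet(M)$ with $|\alpha_\epsilon - \alpha|_1 \to 0$ and $\overline{d}\alpha_\epsilon = (\overline{d}\alpha)_\epsilon$.

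From this density statement I would deduce both surjectivity and injectivity of the induced map. For surjectivity: given a weak cocycle $\alpha$ (so $\overline{d}\alpha=0$), mollify to get smooth $\alpha_\epsilon$ with $d\alpha_\epsilon = 0$ for every $\epsilon$; the problem is that $\alpha_\epsilon$ need not be weakly cohomologous to $\alpha$. One fixes this by a homotopy-operator argument: on a compact smooth manifold (or locally, invoking the smoothly contractible neighborhoods as in Lemma \ref{lem_l1_manifold}), the Poincaré homotopy operator $K$ satisfies $\alpha - \alpha_\epsilon = d K(\alpha-\alpha_\epsilon) + K\,\overline{d}(\alpha-\alpha_\epsilon)$, and since $\alpha-\alpha_\epsilon \to 0$ in $|\cdot|_1$ one concludes that for $\epsilon$ small $\alpha_\epsilon$ represents the same class — here one must be careful that $K$ is bounded on $L^1$, which holds on the compact pieces, and that the globalization via Mayer–Vietoris / a finite good cover respects the norms. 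For injectivity: if a smooth cocycle $\omega$ equals $\overline{d}\beta$ for some weak $\beta$, mollify $\beta$ to get smooth $\beta_\epsilon$ with $d\beta_\epsilon = (\overline{d}\beta)_\epsilon = \omega_\epsilon \to \omega$; then $\omega - d\beta_\epsilon \to 0$ in $|\cdot|_1$, and a bounded-right-inverse/homotopy argument again shows $\omega$ is smoothly exact. The same argument works verbatim with compact support in $V$ since mollification by a small enough $\epsilon$ does not enlarge the support past a prescribed compact neighborhood inside $V$.

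The main obstacle I anticipate is not the mollification itself but the globalization: patching the local homotopy operators and controlling all the estimates in the $L^1$-norm uniformly. Concretely, one needs a finite cover of $M$ by relatively compact subanalytic open sets on which Poincaré's lemma holds with an $L^1$-bounded homotopy operator, and then to run a Mayer–Vietoris or direct induction on the cover. The boundedness of $M$ and the local boundedness hypothesis on the forms (built into the definition of $\overline{\Omega}_{(1)}^\bullet$) are exactly what make "locally $L^1$ implies $L^1$" work, as in Lemma \ref{lem_l1_manifold}; the compact-support case needs additionally that the cover can be chosen so that the homotopy operators preserve supports up to controlled enlargement. Once the $L^1$-bounded local homotopy operators are in hand, the density argument above closes both directions, following the scheme of Youssin in \cite{y}.
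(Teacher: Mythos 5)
There is a genuine gap, and it sits exactly where you say the ``main obstacle'' is. Your mollification step is fine (and is in fact one ingredient of the paper's proof: a convolution producing $\gamma_\ep$ with $|\gamma_\ep-\gamma|_1\leq\ep$ appears there too), but density of the smooth complex in the weak complex does not by itself give an isomorphism on cohomology. To pass from ``$\alpha_\ep\to\alpha$ in $|\cdot|_1$ with $d\alpha_\ep=0$'' to ``$\alpha_\ep$ is cohomologous to $\alpha$ for small $\ep$,'' or from ``$\omega-d\beta_\ep\to 0$'' to ``$\omega$ is smoothly exact,'' you need either a globally $L^1$-bounded homotopy operator (which does not exist on a manifold with nontrivial cohomology) or closed range of $d$ in the $L^1$-norm. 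Neither is available at this point: $M$ is noncompact, $L^1$ cohomology of noncompact manifolds need not be Hausdorff a priori, and the finite-dimensionality that would rescue you is the main theorem of the paper, proved \emph{later using this proposition}. Likewise, your proposed ``finite cover by relatively compact subanalytic open sets on which Poincar\'e's lemma holds with an $L^1$-bounded homotopy operator'' is, near the points of $\delta M$, precisely the hard local computation (Proposition \ref{Poincare_lem_l1_simple}, resting on the Lipschitz conic structure of Section \ref{sect_lip}) that comes after this proposition; invoking it here would be circular.

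The paper's proof avoids all functional analysis by proving a stronger chain-level statement: for every $\alpha\in\overline{\Omega}_{(1)}^{j}(M)$ with $\overline{d}\alpha$ smooth there is a single $\theta\in\overline{\Omega}_{(1)}^{j-1}(M)$ with $\alpha+\overline{d}\theta$ everywhere $C^\infty$ (this gives surjectivity and injectivity at once). The construction takes an exhaustion of $M$ by compact smooth manifolds with boundary $K_i$ (so it never touches $\delta M$ and only needs the classical Lemma \ref{lem_l1_manifold} on each $K_i$), and builds corrections $\theta_i$ supported in the annuli $Int(K_i)\setminus K_{i-2}$ with $|\theta_i|_1\leq 2^{-i}$, so that $\alpha+\sum_{k\leq i}\overline{d}\theta_k$ is smooth near $K_{i-1}$; the sum $\theta=\sum\theta_i$ is locally finite and $L^1$ by the geometric decay, with an auxiliary induction on the degree $j$ to smooth the primitive $\gamma$ near $K_{i-2}$ before cutting off. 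If you want to repair your argument, replace the Mayer--Vietoris/bounded-inverse step by this telescoping correction; the mollification you already have is exactly what produces each $\gamma_\ep$, but the conclusion must be an exact correction built by hand, not a limit argument.
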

\begin{proof} As the proof is the same for the two inclusions, we shall focus on the former one.
It is enough to show that, for any form $\alpha \in
\overline{\Omega}_{(1)} ^{j} (M)$   with $\overline{d}\alpha \in
\Omega_{(1)} ^{j+1} (M)$ (i. e. $\alpha$ is weakly smooth and
$\overline{d}\alpha$ is smooth),
  there exists  $\theta \in \overline{\Omega}_{(1)} ^{j-1} (M)$ such that $(\alpha+\overline{d}\theta)$ is
  $C^\infty$. For this purpose, we prove by induction on $i$ the
  following statements.

{\bf$(\textrm{H}_i)$} Fix a form $\alpha \in
\overline{\Omega}_{(1)} ^{j} (M)$  with $\overline{d}\alpha \in
\Omega_{(1)} ^{j+1} (M)$. Consider an exhaustive sequence of
compact smooth manifolds with boundary $K_i \subset M$ such that
for each
 $i$, $K_i$ is included in the interior of
$K_{i+1}$ and $\cup K_i=M$.
  Then, for any integer $i$, there exists a closed  form
  $\theta_i \in \overline{\Omega}_{(1)} ^{j-1}(M)$ such that
  $supp \; \theta_i \subset Int(K_{i}) \setminus K_{i-2}$ and $|\theta_i|_1\leq
  \frac{1}{2^i}$ and such that $\alpha_i:=\alpha + \sum_{k=1} ^i \overline{d}\theta_k$ is smooth in a neighborhood of $K_{i-1}$.

  Before proving these statements observe that $\theta=\sum_{i=1}
  ^\infty  \theta_i$ is the desired exact form (this sum is locally
  finite).

Let us assume that $\theta_{i-1}$ has been constructed, $i \geq 1$
(we may set $K_0=K_{-1}=K_{-2}=\emptyset$). Observe that by
(\ref{eq isom bar}), there exists a smooth form $\beta \in
\Omega^{(j-1)}_{(1)} (K_i)$ such that $d\beta=\overline{d}\alpha$.
This means that $(\alpha_{i-1}-\beta)$
   is $\overline{d}$-closed, and by  (\ref{eq isom bar}) there is a smooth form $\beta' \in  \Omega^{(j-1)}_{(1)} (K_i)$ such that
   $$\alpha_{i-1}-\beta=\beta' +\overline{d} \gamma,$$ with $\gamma \in
   \overline{\Omega}^{(j-2)}_{(1)} (K_i)$ (if $j=1$ then $\alpha_{i-1}-\beta$ is constant and then
   smooth).  Thanks to the induction hypothesis
     there exists an open  neighborhood $V$ of $K_{i-2}$ on which
     $\alpha_{i-1}$ is smooth. This implies that $\overline{d} \gamma$ is smooth on $V$.
      Therefore, by induction, we know that we can add an exact
   form  $d\sigma$ to $\gamma$ to get a form smooth on $V$. Multiplying $\sigma$ by a function with support in $V$
    which is $1$  in a neighborhood $W$ of $K_{i-2}$, we get a form $\sigma'$ on $M$ such that $(\overline{d}\sigma'+\gamma)$ is smooth on $W$.
      This means that
   we can assume that $\gamma$ is smooth on an open neighborhood $W$ of $K_{i-2}$ possibly replacing $\gamma$ by $(\overline{d}\sigma'+\gamma)$.
    We will assume
   this fact without changing notations.

    By means of a convolution product with bump functions, for any
    $\ep>0$,
     we may construct a smooth form  $\gamma_\varepsilon$ such that $|\gamma_\varepsilon-\gamma|_{1}\leq \varepsilon$.

     Consider a smooth function $\phi$ which is $1$ on a neighborhood of
     $(M \setminus W)\cap K_{i-1}$ and with support in $int(K_i) \setminus
     K_{i-2}$. Then set:
$$\theta_i(x):=\phi(x)(\gamma_\ep-\gamma)(x).$$

 If
$\varepsilon$ is chosen small enough $|\theta_i|_1
+|d\theta_i|_1\leq \frac{1}{2i}$. On a neighborhood of
     $(M \setminus W)\cap K_{i-1}$, because $\phi\equiv 1$,
we have
$\alpha_{i-1}+\overline{d}\theta_i=\beta+\beta'+d\gamma_\varepsilon$
which is smooth. The form $(\alpha_{i-1}+\overline{d}\theta_i)$ is
smooth on $W$ as well since
     $\alpha_{i-1}$ and $\theta_i$ are both smooth.
      \end{proof}

%
%

\subsection{Weakly smooth forms and bi-Lipschitz maps.}
Given two open subsets of $\R^n$, it is well known that  any
subanalytic map   $h:U \to V$ is smooth almost everywhere.
Therefore, any form $\omega$ on $V$ may be pulled-back to a form
$h^*\omega$ on $U$, defined  almost everywhere.


We are going to see that in the case where $h$ is locally
bi-Lipschitz then the pull-back of a smooth form is weakly smooth
(Proposition \ref{prop_pullback_weakly smooth forms}).

\begin{dfn}
Let $\Sigma$ be a   stratification of $U\subset \R^k$ and let $h:U
\to \R^n$  be smooth on strata. The map $h$ is {\bf horizontally
$C^1$ (with respect to $\Sigma$)} if, for any sequence
$(x_l)_{l\in \N}$ in a stratum $S$ of $\Sigma$ tending to some
point $x$ in a stratum $S'$ and for any  sequence $u_l \in
T_{x_l} S$ tending to a vector $u$ in $T_x S'$, we have
$$\lim d_{x_l} h_{|S}(u_l)=d_x h_{|S'} (u).$$
\end{dfn}

Horizontally $C^1$ maps have been introduced by David Trotman and
Claudio Murolo in \cite{mt}. They will be useful to show that the
pull-back of a weakly differentiable $L^1$ form by a subanalytic
bi-Lipschitz map (not everywhere smooth) is weakly differentiable.

 The
following lemma will be needed. Similar results were proved in
\cite{sv} where the theory of stratified forms is investigated and
a de Rham type theorem for these forms is proved.

\begin{lem}\label{lem_h_hor_C1}
Let $h:U\to \R^m$ be a Lipschitz map. There
exists a stratification of $U$ such that $h$ is horizontally $C^1$
with respect to this stratification.
\end{lem}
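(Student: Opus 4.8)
The statement is a standard structure theorem: a Lipschitz map into $\R^m$ admits a stratification over which it is horizontally $C^1$. I would prove it by combining the existence of a $C^1$ (or Whitney) stratification adapted to $h$ with a limiting argument that extracts the horizontal-$C^1$ condition; the subanalytic hypothesis (in force throughout the paper) is what makes such a stratification available. The key tool is that a subanalytic map can be stratified so that on each stratum it is $C^1$ (indeed $C^\infty$), and that the resulting stratification can be taken to satisfy Whitney's conditions (a) and (b).

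First I would invoke subanalytic stratification theory: there is a subanalytic stratification $\Sigma_0$ of $U$ such that $h$ is $C^1$ on each stratum. Next, I would refine $\Sigma_0$ to a stratification $\Sigma$ of $U$ that is Whitney (a)-regular and is compatible with $\Sigma_0$; this is again a standard fact for subanalytic sets. The heart of the argument is then to check that Whitney (a)-regularity of $\Sigma$, together with the Lipschitz bound on $h$, forces the horizontal $C^1$ condition. Concretely, take $x_l \in S$ with $x_l \to x \in S'$, and $u_l \in T_{x_l}S$ with $u_l \to u \in T_x S'$. By Whitney (a), after passing to a subsequence the tangent spaces $T_{x_l}S$ converge to a limit $\tau \supseteq T_x S'$, so $u \in \tau$. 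The vectors $d_{x_l}h_{|S}(u_l)$ are bounded (by the Lipschitz constant of $h$ applied along curves in $S$, since $|u_l|$ is bounded), so after a further subsequence they converge; one must identify the limit as $d_x h_{|S'}(u)$. To pin down the limit I would consider the graph map $\Gamma_h : x \mapsto (x, h(x))$, which is also Lipschitz and subanalytic, and whose differential along a stratum has image the tangent space to the graph; applying Whitney (a) to the induced stratification of the graph of $h$ identifies $\lim d_{x_l}\Gamma_h(u_l)$ with a vector tangent to the graph over $S'$, and projecting to the last $m$ coordinates gives $d_x h_{|S'}(u)$. A subtlety is that the limit a priori could depend on the chosen subsequence; one rules this out because $d_x h_{|S'}(u)$ is determined once $u$ and the convergent tangent directions are fixed, and any two subsequences yield the same value by the graph argument.

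The step I expect to be the main obstacle is precisely the identification of the limit of $d_{x_l}h_{|S}(u_l)$: Whitney (a)-regularity controls the limits of tangent \emph{spaces} but not immediately the limits of \emph{differentials}. The clean way around this, which I would adopt, is to pass to the graph of $h$ and stratify \emph{it} (rather than $U$) so that the projection $\pi : \mathrm{graph}(h) \to U$ is a stratified submersion whose stratification is Whitney regular; since $h$ is Lipschitz, $\pi$ is a bi-Lipschitz homeomorphism onto its image, and the tangent space to a stratum of the graph projects isomorphically onto the tangent space to the corresponding stratum of $U$, with the inverse controlled by the Lipschitz constant. Whitney (a) for the graph stratification then translates directly into the horizontal $C^1$ condition for $h$ on the pushed-forward stratification of $U$. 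This reduces the whole lemma to the existence of a Whitney (a)-regular subanalytic stratification of the (subanalytic, Lipschitz) graph compatible with a $C^1$-stratification of the projection, which is entirely classical.
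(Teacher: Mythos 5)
Your proposal is correct and, in the argument you ultimately adopt, coincides with the paper's proof: take a Whitney (a) stratification of the graph of $h$, push it down to $U$ via the projection (a diffeomorphism on each stratum because the Lipschitz bound excludes vertical tangent vectors), and identify the limit of the differentials by lifting $u_l$ uniquely to the graph's tangent spaces and using condition (a) together with the absence of vertical limit tangent vectors. Your preliminary attempt to stratify $U$ directly is correctly diagnosed as insufficient, and the graph-based fix you describe is exactly the paper's route.
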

\begin{proof}Consider a Whitney $(a)$ stratification $\Sigma_h$ of the graph of
$h$ (see for instance \cite{bcr, tesc} for the definition of
the Whitney $(a)$ condition and the construction of such a
stratification). Let $\pi_1$ (resp. $\pi_2$) be the projection on
the source (resp. target) axis of $h$. The image of $\Sigma_h$
under $\pi_1$ gives rise to a stratification $\Sigma$ of $U$. Let
us prove that $h$ is horizontally $C^1$ with respect to this
stratification. Fix a stratum $S$ of this stratification, a
sequence $x_l \in S$ tending to $x$ belonging to a stratum $S'$, as well as
a sequence $u_l \in T_{x_l} S$ of vectors tending to some $u \in T_x S'$.
Let $Z$ be the stratum which projects onto $S$ via $\pi_1$. For every $l$, there
is a unique vector $v_l \in T_{(x_l;h(x_l))} Z$ which projects
onto $u_l$. As $h$ is Lipschitz the norm of $v_l$ is bounded above
and we may assume that $v_l$ is converging to a vector $v$. The
vector $v$ then necessarily projects onto $u$.

We claim that $v$ is tangent to the stratum $Z'$ of $\Sigma_h$
containing $(x;h(x))$. Indeed, if otherwise, there would be a
vector $w$ in $\tau=\lim T_{(x_l;h(x_l))} Z$ such that $(w-v)$
lies in the kernel of $\pi_1$, in contradiction with the fact that
$h$ is Lipschitz (the graph of Lipschitz map may no have a
vertical limit tangent vector). This shows the claim, and
consequently:
$$\lim d_{x_l} h_{|S} (u_l)=\lim \pi_2(v_l)=\pi_2(v)= d_{x} h_{|S'}(u),$$
since $v$ is tangent to $Z'$.
\end{proof}

We shall need the following fact on subanalytic homeomorphisms. It seems that it could be improved but this will be enough for our purpose. 

\begin{pro}\label{prop_pullback_weakly smooth forms}
Let $U$ be an open subset of $\R^n$ and let $\omega$ be a bounded
weakly differentiable form on $U$ with  $\overline{d}\omega$
bounded. If $h:U\to V$  is a locally
bi-Lipschitz map, then $h^*\omega$ is weakly differentiable and
$\overline{d} h^*\omega=h^* \overline{d} \omega$, almost
everywhere.
\end{pro}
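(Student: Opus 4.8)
The plan is to reduce the statement to the claim that $h$ can be taken horizontally $C^1$ and then to a change-of-variables computation, following the strategy used by Youssin. First I would invoke Lemma \ref{lem_h_hor_C1}: since $h:U\to V$ is locally bi-Lipschitz, in particular Lipschitz on relatively compact subsets, there is a stratification $\Sigma$ of $U$ with respect to which $h$ is horizontally $C^1$; applying the same lemma to $h^{-1}$ and refining, I may assume $\Sigma$ (and its image stratification on $V$) makes both $h$ and $h^{-1}$ horizontally $C^1$, so that $h$ restricts to a diffeomorphism between corresponding strata with uniformly bounded differential and inverse differential. The form $\omega$ is smooth off a set of measure zero, and since $h$ is bi-Lipschitz it carries null sets to null sets, so $h^*\omega$ is defined almost everywhere, measurable and bounded; likewise $h^*\overline d\omega$ is defined a.e., measurable and bounded. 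So the content is the integration-by-parts identity.

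Next I would fix a test form $\varphi\in\Omega_{0,\infty}^{n-j-1}(U)$ and must show
\begin{equation}\label{eq_target_ibp}
\int_U h^*\omega\wedge d\varphi=(-1)^{j+1}\int_U h^*\overline d\omega\wedge\varphi.
\end{equation}
The idea is to push forward to $V$ via $h$. On each top-dimensional stratum $S$ of $\Sigma$, $h|_S$ is a bi-Lipschitz diffeomorphism onto a stratum $T$ of the image stratification, and the ordinary change-of-variables formula together with functoriality of pullback on the smooth locus gives $\int_S h^*\omega\wedge d\varphi=\pm\int_T \omega\wedge d((h|_S)^{-1*}\varphi)$, and similarly for the right-hand side. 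Summing over the strata (the lower-dimensional ones contributing zero to the integrals) reduces \eqref{eq_target_ibp} to the defining weak-differentiability identity for $\omega$ on $V$ — \emph{provided} $(h^{-1})^*\varphi$ is an admissible test form on $V$, i.e.\ it is (weakly) $C^1$ with compact support. Compact support is immediate since $h^{-1}$ is proper on the relevant set; the regularity of $(h^{-1})^*\varphi$ is exactly where horizontal $C^1$-ness of $h^{-1}$ enters: it guarantees that the a.e.-defined pullback $(h^{-1})^*\varphi$ glues across strata into a genuinely (weakly) differentiable form, with weak derivative $(h^{-1})^*d\varphi$.

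The main obstacle, and the step deserving the most care, is precisely this last point: showing that horizontal $C^1$-ness forces the stratified pullback of a $C^\infty$ test form to be weakly differentiable across strata of positive codimension, with the expected weak derivative. Concretely, one fixes a coordinate chart straddling a lower stratum $S'$, writes $(h^{-1})^*\varphi$ as a collection of smooth forms on the open strata, and checks that no boundary terms appear when integrating against a test form supported near $S'$; the horizontal $C^1$ hypothesis provides exactly the continuity of the tangential differentials needed to see that the jets match up to order one in the tangential directions, while the Lipschitz bound controls the normal directions and the measure of tubular neighborhoods of $S'$ so that the contribution of a shrinking neighborhood of the lower strata tends to zero. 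Once this gluing lemma is in hand, the remainder is the routine change-of-variables bookkeeping described above, and the identity $\overline d h^*\omega=h^*\overline d\omega$ a.e.\ follows by uniqueness of the weak exterior derivative.
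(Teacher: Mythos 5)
Your argument is correct and follows essentially the same route as the paper: there too the proof reduces, via change of variables, to the weak-differentiability identity for $\omega$ on $V$ tested against $h^{-1*}\varphi$, the crux being that the pullback of a smooth form by a (locally) bi-Lipschitz map is weakly differentiable with the expected derivative, established by making $h$ horizontally $C^1$ via Lemma \ref{lem_h_hor_C1}. The only difference is presentational: where you sketch a direct proof that no boundary terms arise across the strata, the paper disposes of this step by invoking the Stokes formula for stratified forms from \cite{sv} (see also \cite{l}).
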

\begin{proof}Take $\varphi \in
\Omega_{0,\infty} ^{m-j}  (U)$.

 \underline{{\it First case:}} assume that
$\omega$ is smooth.  Let $\rho$ be the function defined by the
distance to the boundary of $U$ and set $U^\ep := \{\rho \geq
\ep\} $.

By Lemma \ref{lem_h_hor_C1}, $h$ is horizontally $C^1$ with
respect to some stratification of $U$. Consequently, the forms $
h^*\omega$ and $h^*d\omega$ are  continuous at almost every point of
$cl(U^\ep)$ (it is a manifold with boundary a. e.). Hence, so are
$h^*\omega \wedge \varphi$ and $h^*d\omega \wedge \varphi$. The form $h^*\omega$ is smooth almost everywhere. By Stokes'
Formula for stratified forms \cite{sv} (see also \cite{l}),
 $$ \int_{U^\ep}d( h^*\omega \wedge \varphi)=  \int_{\rho =\ep} h^*\omega \wedge \varphi=0,$$
 for $\ep>0$ small enough, since $\varphi$ has compact support in $U$.

 Now, integrating by parts we have for $\ep>0$ small enough:
 $$(-1)^{j+1}\int_{U} h^*\omega \wedge d \varphi = \int_U d h^*\omega \wedge \varphi - \int_{U}
 d( h^*\omega \wedge \varphi)= \int_U d h^*\omega \wedge \varphi. $$
This completes the proof of our first case.

In general, if $\omega$ is not smooth but just weakly smooth, as
$\varphi$ is smooth and $h^{-1}$ bi-Lipschitz, $ h^{-1*} d
\varphi$ is weakly smooth (by the {\it First case} applied to $\varphi$ and $h^{-1}$) and  we may
write:
 $$\int_{U}  h^*\omega \wedge d \varphi
  =\int_V \omega \wedge h^{-1*} d \varphi = \int_V \omega \wedge \overline{d}  h^{-1*}  \varphi ,$$
and, again integrating by parts:
$$  \int_V \omega \wedge \overline{d}  h^{-1*}  \varphi  =(-1)^{j+1}\int_V \overline{d} \omega \wedge h^{-1*}  \varphi=(-1)^{j+1}\int_{U} h^* \overline{d} \omega \wedge  \varphi.$$
\end{proof}

\subsection{Subanalytic bi-Lipschitz maps and $L^1$ cohomology.}\label{sect_hom} In
general, if $f:M\to N$ is a weakly smooth map between smooth
manifolds and if $\omega$ is a $L^1$ form on $M$ then $f^*\omega$
is not necessarily a $L^1$ form on $N$, even if $f$ has bounded
derivatives. Nevertheless, if $f$
is a diffeomorphism and if $|d_x f^{-1}|$ is bounded above then the pullback of a $L^1$ form is $L^1$. 

In particular, if $f$ is a subanalytic bi-Lipschitz map, by
Proposition \ref{prop_pullback_weakly smooth forms}, $h^*\omega$
is a weakly smooth $L^1$ form (it is well defined almost
everywhere). This means that any subanalytic bi-Lipschitz map
$h:M\to N$ induces some maps
$$h^{*\bullet}:\overline{\Omega}_{(1)}^\bullet(N)
\to\overline{\Omega}_{(1)}^\bullet(M),$$  pulling-back the forms.
These mappings induce mappings in cohomology which are  obviously
isomorphisms since $h$ is invertible.

Fix a $C^\infty$ submanifold $M\subset \R^n$. Let $x_0 \in cl(M)$, and set $M^\ep :=B^n(x_0;\ep)$ as well as
$N^\ep:=M\cap S^{n-1}(x_0;\ep)$.

\begin{pro}\label{pro_neigh}
 For any $\ep$ positive small enough,
there exists a fundamental system of neighborhoods
$(U_i)_{i\in \N}$ of $N^\ep$  such that:
$$H_{(1)} ^\bullet (U_i\cap M^\ep) \simeq H_{(1)} ^\bullet (L(x_0;M)).$$
\end{pro}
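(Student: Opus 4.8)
The plan is to realize a conveniently chosen fundamental system of neighborhoods of $N^\ep$ by means of the bi-Lipschitz triviality of the distance function, and then to kill the radial direction by an $L^1$ homotopy. Fix $\ep>0$ small enough for Theorem \ref{thm_hardt} to apply to $M$ with base point $x_0$, set $\rho(x)=|x-x_0|$, and for $i$ large put $\ep_i:=\ep-\tfrac1i$. I would choose $U_i$ so that $U_i\cap M^\ep=M\cap\{\ep_i<\rho<\ep\}$; since $\rho$ is $1$-Lipschitz, near each point of $N^\ep$ a ball of radius $<\tfrac1i$ lies in $\{\rho>\ep_i\}$, and $\bigcap_i\bigl((M\cap\{\ep_i<\rho<\ep\})\cup N^\ep\bigr)=N^\ep$, so the $U_i$ do constitute a fundamental system of neighborhoods of $N^\ep$ (in $cl(M)$). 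By Theorem \ref{thm_hardt}, $M\cap\{\ep_i\le\rho\le\ep\}$ is subanalytically bi-Lipschitz homeomorphic, through a $\rho$-preserving map, to $N^\ep\times[\ep_i;\ep]$; restricting this homeomorphism gives a subanalytic bi-Lipschitz homeomorphism $U_i\cap M^\ep\to N^\ep\times(\ep_i;\ep)$. As $N^\ep\times(\ep_i;\ep)\subset\R^{n+1}$ carries exactly the product of the Riemannian metric induced on $N^\ep\subset\R^n$ and $dt^2$, and as $L^1$ cohomology is invariant under subanalytic bi-Lipschitz maps (Section \ref{sect_hom}, combined with Proposition \ref{pro l1 isom smooth} to pass between $\Omega_{(1)}$ and $\overline{\Omega}_{(1)}$), we get $H^\bullet_{(1)}(U_i\cap M^\ep)\simeq H^\bullet_{(1)}\bigl(N^\ep\times(\ep_i;\ep)\bigr)$, the latter computed with the product metric.

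It then remains to prove an $L^1$ Poincar\'e lemma in the interval direction: $H^\bullet_{(1)}(N\times I)\simeq H^\bullet_{(1)}(N)$ for a bounded subanalytic manifold $N$ and a bounded open interval $I$. I would use the fiberwise homotopy operator attached to the linear retraction of $I$ onto a point $t_0\in I$ chosen to be a Lebesgue point of all coefficient functions: writing a form as $\alpha(x,t)+dt\wedge\beta(x,t)$, the operator acts by $K\omega(x,t)=\int_{t_0}^t\beta(x,u)\,du$. The specifically $L^1$ issue is that the crude bound on $K$ diverges, but because $I$ is bounded an elementary change of variables in the integral bounds $K\omega$ in $L^1$-norm by a constant (depending only on $N$ and $|I|$) times $\|\omega\|_1$; hence $K$ is $L^1$-bounded, and the homotopy identity $\overline d K+K\overline d=\mathrm{id}-p^*s^*$ (with $s:N\hookrightarrow N\times I$ the inclusion of the slice $\{t=t_0\}$ and $p:N\times I\to N$ the projection) shows that $s^*$ is a quasi-isomorphism; one carries this out in $\overline{\Omega}_{(1)}$ and transports it back by Proposition \ref{pro l1 isom smooth}. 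Applying this with $N=N^\ep$ and $I=(\ep_i;\ep)$ yields $H^\bullet_{(1)}(U_i\cap M^\ep)\simeq H^\bullet_{(1)}(N^\ep)$; and since for $\ep$ small $N^\ep=M\cap S^{n-1}(x_0;\ep)$ is subanalytically bi-Lipschitz homeomorphic to the link $L(x_0;M)$ (the convention recalled after \cite{vpams}), we conclude $H^\bullet_{(1)}(U_i\cap M^\ep)\simeq H^\bullet_{(1)}(L(x_0;M))$.

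I expect the main obstacle to be precisely this interval Poincar\'e lemma: unlike in the $L^2$ or $L^\infty$ setting, a naive estimate on the homotopy operator fails, and one really must exploit that $I$ is bounded and perform the change of variables before estimating (and check that $K$ lands in $\overline{\Omega}_{(1)}$, i.e. preserves the $L^1$, local boundedness and weak differentiability conditions, which follows from the homotopy identity once $K$ is known to be $L^1$-bounded). A secondary technical point is that the bi-Lipschitz trivialization supplied by Theorem \ref{thm_hardt} is not smooth, so one must work throughout with measurable weakly differentiable forms, invoking Proposition \ref{prop_pullback_weakly smooth forms} to pull them back and Proposition \ref{pro l1 isom smooth} to recover the smooth theory; verifying that the chosen $U_i$ genuinely form a fundamental system of neighborhoods of $N^\ep$ is the harmless point settled in the first paragraph.
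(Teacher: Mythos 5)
Your proof follows essentially the same route as the paper's: the paper also defines the $U_i$ as preimages of intervals under $\rho$, invokes Theorem \ref{thm_hardt} together with the bi-Lipschitz invariance of $\overline{H}_{(1)}^\bullet$ (via Propositions \ref{prop_pullback_weakly smooth forms} and \ref{pro l1 isom smooth}) to reduce to $\overline{H}_{(1)}^j(N^\nu\times(\ep-\frac{\ep}{2i};\ep))$, and then dismisses the product-with-an-interval computation as routine. Your explicit fiberwise homotopy operator, with the $L^1$ bound $\|K\omega\|_1\le |I|\,\|\omega\|_1$ coming from Fubini and the boundedness of $I$, is a correct filling-in of that routine step.
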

\begin{proof}
 By Proposition \ref{pro l1 isom
smooth}, it is enough to show  the result for the $L^1$ cohomology
of weakly smooth forms.  Apply Theorem
\ref{thm_hardt} to $cl(M)$. Then set
$$U_i:=\rho^{-1}((\ep-\frac{\ep}{2i};2\ep)),$$ for $i$ positive
integer (with the notations of the latter theorem). Now the
 bi-Lipschitz homeomorphism provided by Theorem
\ref{thm_hardt} induces an isomorphism (as explained in the
paragraph preceding the proposition) between $\overline{H}_{(1)}^j(U_i\cap M^\ep)$ and $\overline{H}_{(1)}
^j(N^{\nu} \times (\ep-\frac{\ep}{2i};\ep))$, for any $\nu \in (\ep-\frac{\ep}{2i};\ep)$. It is a routine to
check that the latter is isomorphic to $\overline{H}_{(1)} ^j(N^\nu)$.
\end{proof}

\begin{rem}\label{rmk_restriction_map}
We recall that the link is defined as the intersection of the set with a little sphere, say that it is $ N^\nu$.  In the above proposition, the isomorphism is induced by
restriction. Of course, the restriction of a $L^1$ form on $M^\ep$
has no reason to give rise to a  $L^1$ form on $N^{\nu}$ but every class has a representative which is $L^1$ in
restriction to $N^\nu$, since the isomorphism
$$\overline{H}_{(1)}^j(N^{\nu})\simeq \overline{H}^j_{(1)}(N^{\nu} \times (\ep-\frac{\ep}{2i};\ep))$$ involved in the above proof
is itself induced by the restriction.
\end{rem}

\subsection{An exact sequence nearby singularities.}\label{sect_cohomo_cpct_supp_local} The letter $M\subset \R^n$
still stands for a  $C^\infty$ submanifold. We shall point out an
exact sequence nearby a singular point of the closure of  $M$. Fix
$x_0\in X$ and set $M^\ep:=B^n(x_0;\ep)\cap M$, $N^\ep:=S^{n-1}(x_0;\ep)\cap
N$ as well as $X^\ep:=B^n(x_0;\ep)\cap X$.

 By
Proposition \ref{pro_neigh},   for any $\ep$ small enough, there
is a basis of neighborhoods $(U_i)_{i \in \N}$ of $N^\ep$ for
which the restriction map (see remark \ref{rmk_restriction_map})
induces an isomorphism for every $i$:
\begin{equation}\label{eq_U_i} H_{(1)}^j(U_i)\simeq
H_{(1)}^j(N^\ep).\end{equation}

Denote by $\hat{\Omega}_{(1)}^j (N^\ep)$ the direct limit of
$\Omega_{(1)}^j (U\cap M)$  where $U$ runs over all the neighoborhoods of $N^{\ep}$.  Denote by  $\hat{H}_{(1)}^j(N^\ep)$ the
resulting cohomology (these groups are indeed isomorphic to $H_{(1)}^j (N^\ep)$ thanks to Proposition \ref{pro_neigh}). The short exact sequences
$$0 \to \Omega_{(1),X^\ep} ^\bullet (M^\ep)\to \Omega_{(1)} ^\bullet (M^\ep)\to   \hat{\Omega}_{(1)}^\bullet (N^\ep)\to  0, $$
give rise to the following long exact sequence:
$$\dots \to \hat{H}_{(1)}^{j-1} (N^\ep) \to H_{(1),X^\ep} ^j (M^\ep) \to  H_{(1)} ^j (M^\ep)\to   \dots .$$

Similarly let $C^\bullet _{X^\ep} (M^\ep)$ be the singular cohomology
with compact support in $X^\ep$, i. e. the singular cochains of $M^\ep$ whose support does not meet any neighborhood of $S^{n-1}(x_0;\ep)$. Consider now the mappings:
$$\psi_{M^\ep,X^\ep }^\bullet: \Omega^\bullet _{(1),X^\ep}(M^\ep) \to C^\bullet_{X^\ep} (M^\ep),$$
obtained in the same way as $\psi_{X^\ep} ^\bullet$, by
integrating the $L^1$ differential forms on simplices.

 The above exact sequence, together with the analogous exact
sequence in singular cohomology, provide the following commutative
diagram: \vskip 0.5cm

\begin{center}
     \begin{picture}(-140,0)
\put(103,-95){$\mbox{diag.} \, 1. $}
 \put(-240,0){$\dots\longrightarrow H_{(1),X^\ep}^j (M^\ep)\; \longrightarrow \; H^{j}_{(1)}(M^\ep)
 \longrightarrow\;
            H^{j}_{(1)}(N^\ep) \; \longrightarrow \; H^{j+1}_{(1),X^\ep}(M^\ep) \; \longrightarrow \dots$}
 \put(-255,-70){$\;\dots \; \longrightarrow H_{X^\ep} ^j (M^\ep) \;\;\longrightarrow \;\; H^{j}
 (M^\ep)\;\:\longrightarrow \:\;
            H^{j} (N^\ep) \;\; \longrightarrow \; H^{j+1} _{X^\ep} (M^\ep) \longrightarrow \dots$}
\put(-190,-9){\vector(0,-1){45}} \put(-185,-30){$\psi^j
_{M^\ep,X^\ep}$}
 \put(-110,-9){\vector(0,-1){45}}\put(-105,-30){$\psi^j_{M^\ep}$}
 \put(-40,-9){\vector(0,-1){45}}\put(-35,-30){$\psi^{j}_{N^\ep}$}
\put(25,-9){\vector(0,-1){45}}
\put(30,-30){$\psi^{j+1}_{M^\ep,X^\ep}$}
  \end{picture}
    \end{center}
\vskip 4cm

\section{Proof of the de Rham theorem for $L^1$
cohomology.}\label{sect_de_rham_l1}
 The first step of the proof of  Theorem \ref{thm_intro} is to
compute the cohomology groups locally. This requires to construct
some homotopy operators and describe their properties.

 The letter  $M\subset \R^n$ still stands for a bounded  submanifold. Set $X:=cl(M)$ and take
$x_0\in X$. Set again for simplicity $M^{\ep}:=M \cap B^n(x_0;\ep)$ and
$N^\ep:=M \cap S^{n-1}(x_0;\ep)$  as well as $X^\ep:=B^n(x_0;\ep)\cap X$ (we do not match $x_0$ since it is arbitrary).

\subsection{Some operators on weakly smooth forms.}\label{sect_hom_hop}
For $\ep >0$ small enough and $j>0$ fixed,   we are going to construct operators for weakly smooth forms. 

For this purpose, apply Theorem \ref{thm Lipschitz conic
structure} to $X$ at $x_0$. Let $h: (0;\ep)\times N^\ep\to M^\ep$ be the homeomorphism described
in definition \ref{dfn conical} and
  fix $\omega \in  \overline{\Omega}_{(1)} ^j (M^\ep)$ with $j\geq 0$ (where $\ep$ is also provided by definition \ref{dfn conical}).
 Set now  $Z:= (0;\ep)\times N^\ep$. 

 We may define two forms  $\omega_1$ and $\omega_2 \in
\overline{\Omega}_{(1)}^j(Z)$ by:
$$h^*\omega(t;x):=\omega_1(t;x)+dt\wedge \omega_2(t;x),$$
where $\omega_1$ and $\omega_2$ do not involve the differential term $dt$. 
The forms $\omega_1$ and $\omega_2$ are indeed only defined for almost
every $(t;x) \in Z$. 

Next, we set for almost every $(t;x) \in Z$ and $0<\nu\leq \ep$:
 \begin{equation}\label{eq_def_alpha}\alpha(t;x):=\int_\nu ^t  \omega_2 (s;x) ds,\end{equation} and
 \begin{equation}\label{eq_def_K}
  \K_\nu \omega:=h^{-1*}\alpha.
 \end{equation}

 We first show that $\K_\nu$ preserves the weakly smooth forms.

\bigskip

\subsection*{The mapping $\pi_\nu$.} Given $\omega \in \overline{\Omega}_{(1)} ^j (M^\ep)$ and  $\nu\leq \ep$, let   $\pi_\nu:=h\circ P_\nu \circ h^{-1}$, where $P_\nu (t;x):=(\nu ;x)$.  Given a differential form $\omega $ on $M^\ep$ we will denote by $\pi_\nu ^*\omega$ the form given by the pull-back of $\omega$ by means of $\pi_\nu:M^\ep \to M^\ep$.  

\bigskip

\begin{lem}\label{lem_proprietes_de_K_nu}
 For $M$  as above, $\K_\nu$ preserves the  weakly smooth forms and satisfies on  $\overline{\Omega}_{(1)} ^j (M^\ep)$:
 \begin{equation}\label{eq_K_nu_homot_operator}\overline{d}\K_\nu-\K_\nu \overline{d}=Id-\pi_\nu^*,\end{equation}
\end{lem}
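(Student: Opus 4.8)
The plan is to verify the homotopy identity \eqref{eq_K_nu_homot_operator} in two stages: first on the product $Z = (0;\ep) \times N^\ep$ for the decomposed form $h^*\omega = \omega_1 + dt \wedge \omega_2$, and then transport the identity back to $M^\ep$ via the pull-back by the bi-Lipschitz homeomorphism $h$, using Proposition \ref{prop_pullback_weakly smooth forms} to handle the fact that $h$ is not smooth. Before that, I must check that $\K_\nu$ indeed preserves the space of weakly smooth $L^1$ forms: the operator $\alpha(t;x) = \int_\nu^t \omega_2(s;x)\, ds$ is a fiberwise integration in the $t$-direction, so boundedness of $\omega_2$ in $L^1$ and Fubini give that $\alpha$ is $L^1$; its weak derivative can be computed by differentiating under the integral sign (justified since $\omega$ is weakly differentiable and locally bounded). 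Then $\K_\nu \omega = h^{-1*}\alpha$ is weakly smooth because $h^{-1}$ is bi-Lipschitz, again by Proposition \ref{prop_pullback_weakly smooth forms}.

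The computational heart is the classical product-rule identity on $Z$. Writing $\eta := h^*\omega$, I decompose $\eta = \eta_1 + dt \wedge \eta_2$, with $\eta_1, \eta_2$ not involving $dt$, and set $(\hat{\K}_\nu \eta)(t;x) := \int_\nu^t \eta_2(s;x)\, ds$, so that $\K_\nu \omega = h^{-1*}(\hat{\K}_\nu \eta)$. Splitting the exterior derivative on $Z$ as $\overline{d} = dt \wedge \partial_t + d_x$ (where $d_x$ is the differential in the $N^\ep$-variables), a direct computation gives
\begin{equation*}
\overline{d}(\hat{\K}_\nu \eta)(t;x) = \eta_2(t;x)\, dt \cdot (-1)^{?} + \int_\nu^t d_x \eta_2(s;x)\, ds,
\end{equation*}
while the same splitting applied to $\hat{\K}_\nu(\overline{d}\eta)$ uses $\overline{d}\eta = d_x \eta_1 + dt \wedge(\partial_t \eta_1 - d_x \eta_2)$, so that the $dt$-component of $\overline{d}\eta$ is $\partial_t \eta_1 - d_x \eta_2$ and
\begin{equation*}
\hat{\K}_\nu(\overline{d}\eta)(t;x) = \int_\nu^t (\partial_s \eta_1 - d_x \eta_2)(s;x)\, ds = \eta_1(t;x) - \eta_1(\nu;x) - \int_\nu^t d_x\eta_2(s;x)\, ds,
\end{equation*}
the first term by the fundamental theorem of calculus. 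Subtracting, the $\int d_x \eta_2$ terms cancel and one is left with $(\overline{d}\hat{\K}_\nu - \hat{\K}_\nu \overline{d})\eta = \eta_1(t;x) - \eta_1(\nu;x)$, up to the sign bookkeeping of moving $dt$ past forms of degree $j-1$; since $\omega$ is weakly smooth, $\eta_1(\nu;x)$ is exactly the $t$-independent form $P_\nu^* \eta$ where $P_\nu(t;x) = (\nu;x)$, and $\eta = \eta_1 + dt\wedge\eta_2$ differs from $\eta_1$ only by a term killed by $P_\nu^*$. Hence $(\overline{d}\hat{\K}_\nu - \hat{\K}_\nu\overline{d})\eta = \eta - P_\nu^*\eta$ on $Z$.

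Finally I pull back along $h$. Since $h$ and $h^{-1}$ are bi-Lipschitz, Proposition \ref{prop_pullback_weakly smooth forms} gives $h^{-1*}\overline{d} = \overline{d}\, h^{-1*}$ almost everywhere, and likewise $h^*\overline{d} = \overline{d}\, h^*$. Therefore
\begin{equation*}
(\overline{d}\K_\nu - \K_\nu\overline{d})\omega = h^{-1*}(\overline{d}\hat{\K}_\nu - \hat{\K}_\nu\overline{d})(h^*\omega) = h^{-1*}(h^*\omega - P_\nu^* h^*\omega) = \omega - h^{-1*}P_\nu^* h^*\omega = \omega - \pi_\nu^*\omega,
\end{equation*}
using $\pi_\nu = h \circ P_\nu \circ h^{-1}$ and functoriality of pull-back (valid almost everywhere for these bi-Lipschitz and smooth-on-strata maps). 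This is \eqref{eq_K_nu_homot_operator}.

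The step I expect to be the main obstacle is the justification that all the manipulations commuting $\overline{d}$ with $h^{*}$ and with the fiberwise integral are legitimate in the weak (current) sense rather than just the smooth sense — in particular that $\hat{\K}_\nu$ sends weakly differentiable $L^1$ forms to weakly differentiable $L^1$ forms with the expected weak derivative, which requires differentiation under the integral sign for currents, and that the composition identity $h^{-1*}P_\nu^* h^* = \pi_\nu^*$ holds almost everywhere despite none of these maps being everywhere smooth. Once Proposition \ref{prop_pullback_weakly smooth forms} is invoked for each bi-Lipschitz piece and Fubini's theorem handles the integral, the algebra is the standard Poincaré-lemma cone construction and the signs are routine.
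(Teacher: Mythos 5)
Your overall architecture is the same as the paper's (decompose $h^*\omega=\omega_1+dt\wedge\omega_2$ on the cylinder $Z=(0;\ep)\times N^\ep$, run the cone-construction computation there, and conjugate by $h$ using Proposition \ref{prop_pullback_weakly smooth forms}), but the step you yourself flag as the ``main obstacle'' is a genuine gap, and it is exactly the content of the lemma. You write
$$\hat{\K}_\nu(\overline{d}\eta)=\int_\nu^t(\partial_s\eta_1-d_x\eta_2)\,ds=\eta_1(t;x)-\eta_1(\nu;x)-\int_\nu^t d_x\eta_2\,ds,$$
splitting the $dt$-component of $\overline{d}\eta$ into $\partial_t\eta_1$ and $d_x\eta_2$ and applying the fundamental theorem of calculus to the first summand. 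For a form that is merely weakly differentiable in the sense of Definition of $\overline{d}$ (a current whose \emph{exterior} derivative is an $L^1$ form), only the antisymmetrized combination $\partial_t\eta_1-d_x\eta_2$ is a well-defined $L^1$ function; the individual terms $\partial_t\eta_1$ and $d_x\eta_2$ need not exist as functions or even as integrable distributions, so the splitting of the integral and the componentwise FTC are not available. The same problem infects your separate verification that $\K_\nu$ preserves weakly smooth forms (``differentiating under the integral sign, justified since $\omega$ is weakly differentiable and locally bounded'' --- weak differentiability of $\overline{d}\omega$ does not control the individual partials you need), and the pointwise trace $\eta_1(\nu;x)$ at the slice $\{t=\nu\}$ is likewise not defined without further argument.

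The paper closes this gap by never leaving the weak formulation: it fixes a test form $\varphi\in\Omega^{m-j}_{0,\infty}(M)$, sets $\psi=h^*\varphi$, and proves the single identity
$$(-1)^{j}\int_Z \alpha \wedge \overline{d}\psi = \int_Z h^*\K_\nu d\omega \wedge \psi+\int_Z h^*\omega \wedge \psi- \int_Z h^*\pi_\nu ^* \omega \wedge \psi,$$
which simultaneously exhibits the weak exterior derivative of $\alpha=h^*\K_\nu\omega$ (hence shows $\K_\nu$ preserves weak smoothness) and yields (\ref{eq_K_nu_homot_operator}). The computation writes $\int_Z\alpha\wedge\overline{d}\psi$ as an iterated integral over $[\nu;t]\times N^\ep$, integrates by parts using only the defining property of $\overline{d}h^*\omega=h^*\overline{d}\omega$ (supplied by Proposition \ref{prop_pullback_weakly smooth forms}), and then applies Stokes' formula over the region $\Delta_\nu=\{(s;t):\nu\le s\le t<\ep \ \mbox{or}\ 0<t\le s\le\nu\}$; the two boundary contributions of $\Delta_\nu$ are precisely the terms $Id$ and $\pi_\nu^*$. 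At no point is the derivative separated into components or a pointwise value at $t=\nu$ invoked. To repair your argument you would either have to reproduce this distributional computation or insert a mollification step and justify passing to the limit in every term, which is essentially the same amount of work.
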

\begin{proof}
Take $\omega$ in $  \overline{\Omega}_{(1)} ^j (M^\ep)$ and  
let us fix a  form $\varphi \in \Omega^{m-j} _{0,\infty} (M)$. Let $h$ be as above.

 The mapping
$h$  is locally bi-Lipschitz in $h^{-1}(M^\ep)$ (see Remark \ref{rem_conical} (\ref{rem_conical_item3})).
   By Proposition
\ref{prop_pullback_weakly smooth forms},  the form $h^*\omega$ is
weakly differentiable and $\overline{d} h^* \omega=
h^*\overline{d}\omega$ and the same is true for $\varphi$. Let $\alpha$ be the form defined in (\ref{eq_def_alpha}) and set $\psi:=h^*\varphi$.  It is enough to show:
$$(-1)^{j}\int_Z \alpha \wedge \overline{d}\psi = \int_Z h^*\K_\nu d\omega \wedge \psi+\int_Z h^*\omega \wedge \psi- \int_Z h^*\pi_\nu ^* \omega \wedge \psi . $$

For this purpose, note that we have (for relevant orientations):
\begin{eqnarray*}
(-1)^{j}\int_{Z} \alpha \wedge \overline{d}\psi&=&(-1)^{j}\int_0 ^\ep(\int_{ [\nu;t]\times N^\ep
} h^*\omega \wedge \overline{d}\psi)\,dt
\\&=&  \int_0 ^\ep\int_{ [\nu;t]\times
N^\ep} \overline{d}h^*\omega \wedge\psi-\int_0 ^\ep\int_{[\nu;t]\times
N^\ep}\overline{d}(
h^*\omega(s;x) \wedge \psi(t;x) ) ,
\end{eqnarray*}
(integrating by parts) and therefore if $\Delta_\nu:=\{(s;t): \nu \leq s \leq t < \ep  \; \mbox{or} \; 0<  t \leq s \leq \nu\}$ we have:
$$(-1)^{j}\int_{Z} \alpha \wedge \overline{d}\psi =\int_{Z}
h^*\K_\nu d \omega \wedge \psi- \int_{N^\ep}\int_{\Delta_\nu}\overline{d}( h^*\omega(s;x) \wedge \psi(t;x) ) . $$

But, since  $\psi$ has compact
support in $M^\ep$,  by Stokes' formula  we have:
$$\int_{N^\ep}\int_{\Delta_\nu}\overline{d}(
h^*\omega(s;x) \wedge \psi(t;x) )=\int_Z h^*\pi_\nu ^* \omega \wedge \psi -\int_{Z} h^*\omega\wedge \psi.$$
Together with the preceding equality this implies that
$$(-1)^{j}\int_{Z} \alpha \wedge \overline{d}\psi =\int_{Z} h^*\omega\wedge
\psi +\int_{Z} h^*\K_\nu d \omega \wedge \psi - \int_Z h^*\pi_\nu ^* \omega \wedge \psi ,$$ as required. 
\end{proof}

\subsection*{The homotopy  operator $\K$.} We derive from $\K_\nu$ a local homotopy operator $\K$. 

Let $\ep >0$ be as above  and let $j>0$. We  just saw that $\K_\nu$ preserves the weakly smooth forms. Observe that if $\omega$ has compact support in $X^\ep$ then $h^*\omega(\nu;x)$ is zero for $\nu<\ep$ sufficiently close to $\ep$. Therefore 
$\K_\nu$ induces an  operator:$$\K: \overline{\Omega}_{(1),X^\ep} ^j (M^\ep)\to
\overline{\Omega}_{(1),X^\ep} ^{j-1} (M^\ep),$$
defined by the stationary limit $\K\omega:=\lim_{\nu \to \ep} \K_\nu$.
\bigskip
Below we describe the properties of $\K$. 

\begin{pro}\label{pro_proprietes_de_K}
 For $M$  as above,  $\K$ is a homotopy operator, in the sense that:
\begin{equation}\label{eq_K_homot_operator}\overline{d}\K-\K \overline{d}=Id,\end{equation}
bounded for the $L^1$ norm and satisfying for $j<m$:
\begin{equation}\label{eq_int_komega_link}
 \lim_{t\to 0} \int_{N^{t}} |\K\omega|=0,
\end{equation}
for any $\omega \in \overline{\Omega}_{(1),X^\ep} ^j (M^\ep)$.
\end{pro}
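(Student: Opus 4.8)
The plan is to establish the three assertions in the stated order, all of them ultimately resting on the explicit warped-product description $h^*\rho_X^2\approx dt^2+\sum_{i=1}^{m-1}\varphi_i^2(t;x)\lambda_i^2(x)$ of (\ref{eq_dfn_conical}) and on the fact that each $\varphi_i(\cdot;x)$ is \emph{nondecreasing} in $t$ (assertion (\ref{item_dfn_phi_decreasing}) of Definition \ref{dfn conical}). I write $\Phi:=\prod_{i=1}^{m-1}\varphi_i$ and, for $I\subset\{1,\dots,m-1\}$, $\Phi_{I^c}:=\prod_{i\notin I}\varphi_i$; $d\mu$ denotes the volume measure of $N^\ep$, comparable (by the tame-basis condition) to $|\lambda_1\wedge\dots\wedge\lambda_{m-1}|$. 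All estimates below are up to a fixed multiplicative constant absorbing the "$\approx$" of (\ref{eq_dfn_conical}). For the homotopy identity (\ref{eq_K_homot_operator}): Lemma \ref{lem_proprietes_de_K_nu} gives $\overline{d}\K_\nu-\K_\nu\overline{d}=\mathrm{Id}-\pi_\nu^*$; if $\omega$ has compact support in $X^\ep$ then $h^*\omega$ vanishes for $t$ near $\ep$, so $\K_\nu\omega$ is stationary as $\nu\to\ep$ while $\pi_\nu^*\omega=0$ for such $\nu$, and passing to the limit yields $\overline{d}\K-\K\overline{d}=\mathrm{Id}$ on $\overline{\Omega}_{(1),X^\ep}^j(M^\ep)$.

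The computational core is the estimate: for any $L^1$ $j$-form $\beta$ with compact support in $X^\ep$,
\[\int_{M^\ep}|\K\beta|\,dvol_M\ \leq\ C\,\ep\int_{M^\ep}|\beta|\,dvol_M.\]
Indeed, fixing $\nu$ near $\ep$ and writing $h^*\beta=\beta_1+dt\wedge\beta_2$ with $\beta_1,\beta_2$ free of $dt$ and $\beta_2=\sum_{|I|=j-1}b_I(t;x)\lambda_I(x)$ in the a.e.\ coframe $(dt,\lambda_1,\dots,\lambda_{m-1})$ of $Z$, one has $\alpha=\sum_I\big(\int_\nu^t b_I(s;x)\,ds\big)\lambda_I(x)$, while (\ref{eq_dfn_conical}) gives, at $h(t;x)$, $|\K\beta|\approx\sum_I\big|\int_\nu^t b_I\,ds\big|\prod_{i\in I}\varphi_i(t;x)^{-1}$ and the volume form $\approx\Phi\,dt\,d\mu$. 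Since $b_I(\cdot;x)$ vanishes near $\ep$ we replace $\int_\nu^t$ by $-\int_t^\ep$, and Fubini over $\{0<t<s<\ep\}$ turns $\int_{M^\ep}|\K\beta|$ into (a constant times) $\sum_I\int_0^\ep\int_{N^\ep}|b_I(s;x)|\big(\int_0^s\Phi_{I^c}(t;x)\,dt\big)d\mu\,ds$; by monotonicity $\int_0^s\Phi_{I^c}(t;x)\,dt\leq s\,\Phi_{I^c}(s;x)\leq\ep\,\Phi_{I^c}(s;x)$, and $\sum_I\int_0^\ep\int_{N^\ep}|b_I|\Phi_{I^c}\,d\mu\,ds\approx\int_{M^\ep}|\beta|$ (this sum is just the $dt$-component of $h^*\beta$ measured in the ambient metric). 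Boundedness of $\K$ for $|\cdot|_1$ follows by applying this to $\beta=\omega$ and to $\beta=\overline{d}\omega$, together with $\overline{d}\K\omega=\omega+\K\overline{d}\omega$ (from the homotopy identity): $|\K\omega|_1=\int_{M^\ep}|\K\omega|+\int_{M^\ep}|\omega+\K\overline{d}\omega|\leq(1+C\ep)|\omega|_1$.

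For (\ref{eq_int_komega_link}), set $g(t):=\int_{N^t}|\K\omega|$. Restricting the metric (\ref{eq_dfn_conical}) to $\{t\}\times N^\ep$ — where $dt$ drops out — the same computation gives
\[g(t)\ \leq\ C\sum_I\int_{N^\ep}\Big|\int_t^\ep b_I(s;x)\,ds\Big|\,\Phi_{I^c}(t;x)\,d\mu,\]
and now $|I^c|=m-j\geq1$ since $j<m$, so $\Phi_{I^c}$ is a nonempty product of factors $\varphi_i(t;x)$, each bounded and tending to $0$ as $t\to0$. Fix $\eta>0$. For $0<t<\delta<\ep$, split $\int_t^\ep=\int_t^\delta+\int_\delta^\ep$, giving $g(t)\leq g_1(t,\delta)+g_2(t,\delta)$. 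In $g_1$, monotonicity again lets one replace $\Phi_{I^c}(t;x)$ by $\Phi_{I^c}(s;x)$ under $\int_t^\delta(\cdot)\,ds$, so $g_1(t,\delta)\leq C\sum_I\int_0^\delta\int_{N^\ep}|b_I(s;x)|\Phi_{I^c}(s;x)\,d\mu\,ds\approx C\int_{M^\delta}|\omega|$, which is independent of $t$ and tends to $0$ as $\delta\to0$ since $\omega$ is $L^1$; fix $\delta$ with $g_1(t,\delta)<\eta/2$ for all $t<\delta$. With $\delta$ now fixed, $B_{I,\delta}(x):=\big|\int_\delta^\ep b_I(s;x)\,ds\big|$ lies in $L^1(N^\ep,d\mu)$ (because $\Phi_{I^c}\geq c_\delta>0$ on $[\delta;\ep]\times N^\ep$ and $\int_\delta^\ep\int_{N^\ep}|b_I|\Phi_{I^c}\,d\mu\,ds\leq C\int_{M^\ep}|\omega|<\infty$), so $g_2(t,\delta)=\sum_I\int_{N^\ep}B_{I,\delta}(x)\Phi_{I^c}(t;x)\,d\mu\to0$ as $t\to0$ by dominated convergence, the integrand tending to $0$ pointwise and being dominated by $C\,B_{I,\delta}$. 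Hence $g(t)<\eta$ for all small $t$, i.e.\ $\lim_{t\to0}\int_{N^t}|\K\omega|=0$.

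The main obstacle is (\ref{eq_int_komega_link}): the integrand $\big|\int_t^\ep b_I\,ds\big|\Phi_{I^c}(t;x)$ converges to $0$ only pointwise in $x$, with no evident $x$-integrable majorant near $t=0$, which forces the two-step $(\delta,t)$ argument — absolute continuity of the $L^1$ integral disposes of the $s\in(t,\delta)$ part, and dominated convergence, available precisely because $j<m$ (so $I^c\neq\emptyset$), disposes of the $s\in(\delta,\ep)$ part. The boundedness estimate is a one-step version of the same Fubini-and-monotonicity computation, and the homotopy identity is immediate from Lemma \ref{lem_proprietes_de_K_nu}.
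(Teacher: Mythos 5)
Your proof is correct and follows essentially the same route as the paper: decompose in the tame coframe, use $\varphi_I\cdot\varphi_{\hat I}\sim J_h$ together with the monotonicity of the $\varphi_i$ in $t$, and conclude by Fubini and dominated convergence, with $j<m$ guaranteeing a nonempty complementary index set so that $\Phi_{I^c}\to 0$. The only differences are organizational (a direct Fubini for the global $L^1$ bound instead of uniform slice estimates, and a $(\delta,t)$ splitting instead of a double application of dominated convergence for the limit), not a different idea.
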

\begin{proof}
As observed in the paragraph preceding the proposition, if $\omega$ has compact support in $X^\ep$ then $h^*\omega(\nu;x)$ vanishes near $\nu =\ep$, and thus $\pi_\nu ^*\omega$ is zero if $\nu$ is sufficiently close to $\ep$.  As a matter of fact, equality (\ref{eq_K_homot_operator})   follows from (\ref{eq_K_nu_homot_operator}).  We have to check that $\K$ is bounded for the $L^1$ norm and show (\ref{eq_int_komega_link}). 

{\bf Some notations.} We shall write $\I_k^{m}$ for the set  all the
multi-indices  $I=(i_1,\dots,i_k)$ with $0< i_1<\dots
 <i_k<m$. Given $I \in \I_k$ we shall write $\hat{I}$ for the
 multi-index of $\I_{m-1-k}$
  such that $I \cup \hat{I}=\{1,\dots,m-1\}$.
Let $\lambda_1,\dots,\lambda_{m-1}$ be the tame basis of $1$-forms
provided by definition \ref{dfn conical} (on a dense subset $N'$
of $N^\ep$) and set for any multi-index
 $\lambda_I:=\lambda_{i_1}\wedge \dots \wedge \lambda_{i_k}$.

We now are going to show that the operator $\K$ is bounded for the
$L^1$ norm.

As $(\lambda_1;\dots;\lambda_{m-1})$ is a tame basis of $N^\ep$ we may decompose
     $\alpha:=\sum_{I \in \I_{j-1}} \alpha_I \lambda_I$ (where $\alpha$ is the form defined in (\ref{eq_def_alpha}))
   and observe that by $(3)$ of definition \ref{dfn conical} \begin{equation}\label{eq_K_omega_simeq} |\K \omega|= |\sum_{I \in \I_{j-1}}h^{-1*} \alpha_I |\sim \sum_{I \in \I_{j-1}} \frac{|\alpha_I\circ h^{-1}|}{\varphi_I\circ h^{-1}},\end{equation} where
 $\varphi_I =\varphi_{i_1} \cdots \varphi_{i_k}$,
 and consequently it is enough to show that all the summands of the
right hand side are $L^1$ on $M^\ep$. Changing variables by means of $h$, this amounts to show that for any
$I \in \I_k$:
$$\int_{Z} |\alpha_I|\cdot\frac{J_h}{\varphi_I}  <\infty,$$
where $J_h$ stands for the absolute value of the Jacobian
determinant of $h$.

  Alike, decompose  $$\omega_{2}=\sum_{I \in \I_{j-1}} \omega_{2,I} \lambda_I,$$ (recalled that we decomposed $h^*\omega:=\omega_1+dt \wedge \omega_2$).
     As  $(\lambda_1;\dots;\lambda_{m-1})$ is a tame basis of $N^\ep$  we have$|\omega_2|\sim \sum_{I\in \I_{j-1}} |\omega_{2,I}|$.
 For the same reasons as in (\ref{eq_K_omega_simeq}):
\begin{equation}\label{eq maj de omega}|\omega_{2,I}(s;x)| \leq C |\omega (h(s;x))| \cdot \varphi_I(s;x). \end{equation}

By  $(3)$ of definition \ref{dfn conical}   we have
on $Z$:
\begin{equation}\label{eq_varphi_et_jh}\varphi_I \cdot
\varphi_{\hat{I}}\sim J_h\end{equation}


Put $Y^t :=\{t\} \times N^\ep$. There is a constant $C$ such that  for almost every $t$ and any $I\in
\I_k$:
\begin{eqnarray}
\int_{Y^t} \frac{|\alpha_I|}{\varphi_I} \cdot J_h &\leq &C \int_{x \in N^\ep } |\alpha_I(t;x)|\cdot\dfrac{J_h(t;x)}{\varphi_{I}(t;x)} \nonumber\\
&\leq & C\int_{N^\ep } |\alpha_I(t;x)|\cdot \varphi_{\hat{I}}(t;x) \quad \mbox{(by (\ref{eq_varphi_et_jh}))}\nonumber\\
&\leq & C\int_{N^\ep } \int_t ^\ep |\omega_{2,I} (s;x)|\cdot \varphi_{\hat{I}}(t;x)ds\quad \mbox{(by (\ref{eq_def_alpha}))}\nonumber \\
&\leq & C\int_{N^\ep} \int_t ^\ep |\omega (s;x)|\cdot \varphi_{I}(s;x) \cdot \varphi_{\hat{I}}(t;x)ds \quad \mbox{(by (\ref{eq maj de omega}))}\label{eq_calcul_L1norm_K}\\
&\leq & C\int_{N^\ep} \int_t ^\ep |\omega(s;x)|\cdot \varphi_{I}(s;x)
\cdot \varphi_{\hat{I}}(s;x)ds\nonumber 
\end{eqnarray}
since, by $(1)$ of definition \ref{dfn conical},
$\varphi_{\hat{I}}(t;x)$ is  nondecreasing with respect to $t$. We finally
get:
\begin{equation}\label{eq_calcul_L1norm_KIII}\int_{Y^t} \frac{|\alpha_I|}{\varphi_I} \cdot J_h   \leq C\int_{ (t;\ep)\times N^\ep}  |\omega (s;x)|\cdot J_h(s;x) =  C \int_{h((t;\ep) \times N^\ep ) }  |\omega |.\end{equation}
which is bounded above uniformly in $t$ since $\omega$ is a $L^1$
form,  proving that $\frac{\alpha_I}{\varphi_I}$ is integrable. It remains to establish
(\ref{eq_int_komega_link}). For simplicity set $$f_t(x)=\int_0 ^\ep
|\omega (s;x)|\cdot \varphi_{I}(s;x) \cdot
\varphi_{\hat{I}}(t;x)ds.$$

As $\varphi_{\hat{I}}(t;x)$ is  nondecreasing with respect to $t$, this family of  functions is obviously bounded by the $L^1$ function $ \int_0 ^\ep |\omega(x;s)|\cdot \varphi_{I}(s;x)
\cdot \varphi_{\hat{I}}(s;x)ds$.

Moreover,
as $\varphi_{\hat{I}}$  goes to zero as $t$ tends to zero (since $j<m$), we see
that the function $f_t$ tends to zero (pointwise) as $t$ goes to zero (by the Lebesgue dominated convergence theorem).
Hence, (applying a second time  the Lebesgue dominated convergence theorem) we conclude:
$$ \lim_{t \to 0} \int_{N^\ep} f_t=0. $$

By (\ref{eq_calcul_L1norm_K}):
$$\lim_{t \to 0} \,\int_{Y^t} \frac{|\alpha_I|}{\varphi_I} \cdot J_h  \leq  C \lim_{t \to 0} \int_{N^\ep } f_t=0. $$
But, by (\ref{eq_K_omega_simeq}) this establishes (\ref{eq_int_komega_link}).
\end{proof}

\begin{rem}\label{rem_K}
   Notice  that equation (\ref{eq_calcul_L1norm_KIII}) yields that thee is a constant $C$ such that:
$$\int_{N^t} |\K \omega| \leq C  |\omega|_1,  $$
for any  $t\leq \ep $ and any form $\omega$ in $\Omega_{(1)}^j(M^\ep)$. 
\end{rem}

\bigskip

%

\subsection{Local computations of the $L^1$ cohomology.} The
following proposition may be considered as a  "Poincar\'e Lemma
for $L^1$ cohomology". This is an important step in the proof of
Theorem \ref{thm_intro}.

\begin{pro}\label{Poincare_lem_l1_simple}
 For $\ep>0$ small enough we have for every $j$:
$$H_{(1),X^\ep}^j ( M^\ep ) \simeq 0.
$$
\end{pro}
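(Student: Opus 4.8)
The plan is to prove the vanishing $H_{(1),X^\ep}^j(M^\ep) \simeq 0$ for $\ep$ small by exhibiting, for every $\overline{d}$-closed form, an explicit primitive with compact support in $X^\ep$; the natural candidate is the homotopy operator $\K$ built in Proposition \ref{pro_proprietes_de_K}. Since, by Proposition \ref{pro l1 isom smooth}, the inclusion of smooth $L^1$ forms into weakly smooth $L^1$ forms induces an isomorphism on cohomology (also in the compactly supported version), it suffices to prove the statement for the complex $\overline{\Omega}_{(1),X^\ep}^\bullet(M^\ep)$ of weakly smooth forms. So I would fix $j$, take $\omega \in \overline{\Omega}_{(1),X^\ep}^j(M^\ep)$ with $\overline{d}\omega = 0$, and consider $\K\omega \in \overline{\Omega}_{(1),X^\ep}^{j-1}(M^\ep)$.

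The main computation is to check that $\overline{d}\K\omega = \omega$. From Proposition \ref{pro_proprietes_de_K} we have $\overline{d}\K - \K\overline{d} = \mathrm{Id}$ on $\overline{\Omega}_{(1),X^\ep}^j(M^\ep)$, so $\overline{d}\K\omega = \omega + \K\overline{d}\omega = \omega$ as soon as $\overline{d}\omega = 0$; and $\K\omega$ is $L^1$ with $L^1$ weak derivative because $\K$ is bounded for the $L^1$ norm. It remains to see that $\K\omega$ genuinely has compact support in $X^\ep$: this is built into the construction, since $\K$ was defined (via the stationary limit $\K_\nu$, $\nu\to\ep$) precisely on forms with compact support in $X^\ep$, so $\K\omega$ is supported away from $N^\ep = S^{n-1}(x_0;\ep)\cap M$, i.e. it has compact support in $X^\ep$. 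Hence $\K\omega$ provides the desired primitive and every closed form is exact, which gives $H_{(1),X^\ep}^j(M^\ep) \simeq 0$.

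One subtlety deserves explicit attention, and I expect this to be the only real obstacle: the case $j = m$ (and the low-degree sanity checks). Proposition \ref{pro_proprietes_de_K} is stated for $j < m$, because the estimate (\ref{eq_int_komega_link}) and the homotopy identity there use that $\varphi_{\hat I}$ tends to $0$, which requires $\hat I \neq \emptyset$. For $j = m$ the space $\overline{\Omega}_{(1),X^\ep}^m(M^\ep)$ must be handled separately: a top-degree $L^1$ form with compact support in $X^\ep$ can be integrated against the function which is the $t$-primitive in the cone coordinates, and one checks directly (this is essentially the $\hat I = \emptyset$ instance of the same computation, using only that $\varphi_I \cdot 1 \sim J_h$ and that $\omega$ is $L^1$) that $\K\omega$ is again an $L^1$ form with compact support in $X^\ep$ and $\overline{d}\K\omega = \omega$ — there is no obstruction in top degree on a manifold, and the cone coordinates make this transparent. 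I would also note $j=0$ is trivial since there are no $(-1)$-forms but also no nonzero closed compactly supported $0$-forms when $x_0$ is a genuine limit point. Assembling these cases yields the proposition for all $j$ and all $\ep$ small enough.
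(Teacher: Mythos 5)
Your argument is correct and is essentially the paper's own proof: apply the homotopy operator $\K$ of Proposition \ref{pro_proprietes_de_K} to a closed form with compact support in $X^\ep$ to get $\overline{d}\K\omega=\omega$, then conclude exactness in the smooth complex via Proposition \ref{pro l1 isom smooth}. Your separate treatment of $j=m$ is harmless but unnecessary: only the boundary estimate (\ref{eq_int_komega_link}) of Proposition \ref{pro_proprietes_de_K} is restricted to $j<m$, while the homotopy identity (\ref{eq_K_homot_operator}) and the $L^1$-boundedness hold in every degree, so the paper applies $\K$ uniformly for all $j>0$.
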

\begin{proof}
 For $j=0$, a closed  form with compact support is the zero form and the result is clear.  Fix a closed form $\omega  \in  \Omega_{(1),X^\ep} ^j ( M^\ep)$ with $j>0$.
 Let $\K$ be the homotopy operator constructed in the previous section (see Proposition \ref{pro_proprietes_de_K}).
 As $\omega$ is closed with compact support $\overline{d}\K\omega=\omega$, showing that $\omega$ is $\overline{d}$-exact and thus exact by Proposition \ref{pro l1 isom smooth}.
\end{proof}

\subsection{The mappings $\psi_M ^j$}
As in the case of the classical de Rham theorem (for compact smooth
manifolds), the isomorphism is given by integration on simplices.
Let us define this natural map.  Recall that singular simplices $\sigma:\Delta_j \to M $ are assumed to be  subanalytic mappings. Therefore, see \cite{vlinfty} for details, we may define the following maps:
\begin{eqnarray*}\psi_M^j:\Omega_{(1)}^j(M )&\to&
C^{j}(M )\\
 \omega &\mapsto& [\psi_M ^j(\omega):\sigma \mapsto \int_\sigma
\omega].\end{eqnarray*}

By Stokes' formula for singular simplices  \cite{p,sv,vlinfty}, this is a
cochain map.

\subsection{De Rham  theorem for $L^1$ cohomology.}
We are now ready to prove the following theorem, which clearly
implies Theorem \ref{thm_intro}.

\begin{thm}\label{thm_l1_psi_M_isom}
The above mappings $\psi_M ^j$ induce   isomorphisms between the
respective cohomology groups  for any bounded (subanalytic) manifold
$M$.
\end{thm}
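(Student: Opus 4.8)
The plan is to run the standard sheaf-theoretic / Mayer--Vietoris comparison argument adapted to the $L^1$ setting, using the local computations already established. First I would observe that, since $M$ is a bounded subanalytic manifold, it admits a finite good cover: by subanalytic triviality (Theorem \ref{thm_hardt} applied at the various points of $\delta M$ and standard subanalytic cell decompositions), we may cover $cl(M)$ by finitely many subanalytic open sets $V_1,\dots,V_N$ such that each $V_i\cap M$ and each nonempty finite intersection $V_{i_1}\cap\cdots\cap V_{i_k}\cap M$ is, up to a subanalytic bi-Lipschitz homeomorphism, of the form $(0;\ep)\times L$ for a link $L$ (near $\delta M$) or an honest smooth contractible set (in the interior of $M$). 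The point of working with the $L^1$ complex of \emph{weakly} smooth forms and with Proposition \ref{pro l1 isom smooth} is that bi-Lipschitz maps act on $\overline{\Omega}_{(1)}^\bullet$ (Section \ref{sect_hom}), so these local models have computable $L^1$ cohomology.

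The key local input is the "Poincar\'e Lemma for $L^1$ cohomology". For an interior point the local $L^1$ cohomology is that of a contractible smooth set, hence trivial in positive degrees (Lemma \ref{lem_l1_manifold}, Proposition \ref{pro l1 isom smooth}). Near a point $x_0\in\delta M$, Proposition \ref{pro_neigh} identifies the $L^1$ cohomology of a small neighborhood $M^\ep$ of $x_0$ in $M$ with that of the link $L(x_0;M)$; more precisely, combining Proposition \ref{pro_neigh} with the homotopy operator $\K$ of Proposition \ref{pro_proprietes_de_K} and the long exact sequence of diag.~1 together with Proposition \ref{Poincare_lem_l1_simple} ($H^j_{(1),X^\ep}(M^\ep)\simeq 0$), one gets that restriction $H^j_{(1)}(M^\ep)\to H^j_{(1)}(N^\ep)\cong H^j(L(x_0;M))$ is an isomorphism, matching the corresponding statement $H^j(M^\ep)\cong H^j(N^\ep)$ in singular cohomology via the conical structure; and the comparison map $\psi^j$ intertwines these (right-hand square of diag.~1, commutativity of $\psi$ with the connecting maps). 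So on every basic open set of the cover, and by induction on dimension of the link on every finite intersection, the map $\psi^j$ on local $L^1$ cohomology is an isomorphism onto local singular cohomology.

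Finally I would assemble these local statements into the global one by a Mayer--Vietoris induction on the number $N$ of sets in the cover. For this one needs: (i) a Mayer--Vietoris short exact sequence $0\to\Omega_{(1)}^\bullet(U\cup U')\to\Omega_{(1)}^\bullet(U)\oplus\Omega_{(1)}^\bullet(U')\to\Omega_{(1)}^\bullet(U\cap U')\to 0$ for $U,U'$ unions of basic opens intersected with $M$, which requires a subanalytic partition of unity subordinate to the cover whose differentials are bounded (so that multiplying an $L^1$ form by a cutoff keeps it $L^1$ with $L^1$ derivative) --- such partitions exist since subanalytic cell decompositions give Lipschitz bump functions; (ii) the analogous Mayer--Vietoris sequence in singular cohomology; (iii) compatibility of $\psi^\bullet$ with both, which follows from Stokes' formula for simplices. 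Then the five lemma, with the base case being the basic opens and their intersections handled above, yields that $\psi_M^j$ is an isomorphism. The main obstacle I anticipate is step (i): ensuring that the Mayer--Vietoris sequence is genuinely short exact at the level of $L^1$ (not merely weakly-$L^1$) forms, i.e. that one can split a globally $L^1$ form with $L^1$ derivative across the cover without destroying integrability --- this is exactly where the boundedness of the derivatives of the subanalytic partition of unity, together with the local bi-Lipschitz triviality controlling the volume form near $\delta M$, must be used carefully; the surjectivity onto $\Omega_{(1)}^\bullet(U\cap U')$ and the exactness need the cutoffs to be well-behaved near the singular locus of $cl(M)$.
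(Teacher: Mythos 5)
Your local ingredients are exactly the right ones and coincide with the paper's: the vanishing of $H^j_{(1),X^\ep}(M^\ep)$ (Proposition \ref{Poincare_lem_l1_simple}), the identification of $H^j_{(1)}(M^\ep)$ with $H^j_{(1)}(N^\ep)$ via the long exact sequence of diag.~1, induction on the dimension of the link, and the compatibility of $\psi^\bullet$ with restriction. The gap is in your local-to-global mechanism, and it is not the obstacle you flag. You assume the existence of a finite cover $V_1,\dots,V_N$ of $cl(M)$ such that \emph{every} nonempty finite intersection $V_{i_1}\cap\cdots\cap V_{i_k}\cap M$ is, up to subanalytic bi-Lipschitz homeomorphism, a standard model $(0;\ep)\times L$ or a contractible smooth set. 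Nothing in Theorem \ref{thm_hardt} or Theorem \ref{thm Lipschitz conic structure} produces such a cover: these results describe $M\cap B^n(x_0;\ep)$ for a single small ball centred at a single point, and the intersection of two such balls centred at different points of $\delta M$ carries no controlled conic or product structure. In the smooth compact case good covers come from geodesic convexity; there is no analogue here, and without acyclicity (or at least computability) of the multiple intersections the Mayer--Vietoris induction and the five lemma do not close up. By contrast, the obstacle you do worry about --- short exactness of the $L^1$ Mayer--Vietoris sequence --- is unproblematic: restricting smooth bump functions from $\R^n$ to the bounded submanifold $M$ gives partitions of unity with bounded differentials, so cutting off preserves $\Omega_{(1)}^\bullet$; this is the same fact that makes the relevant sheaves soft.

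The paper sidesteps the good-cover problem entirely. It defines the presheaves $U\mapsto\Omega_{(1)}^j(U\cap M)$ on $X=cl(M)$, observes that the associated sheaves $\la^j$ are soft, hence acyclic, and invokes the hypercohomology spectral sequence (Bredon, IV Theorem 2.2): it then suffices to verify that the comparison map is an isomorphism on a cofinal system of neighbourhoods of each \emph{single} point of $X$, which is exactly the local computation you describe (interior points being trivial, boundary points handled by diag.~1 and induction on $m$ applied to $N^\ep$). If you wish to keep a \v{C}ech-flavoured argument you would have to replace your finite good cover by the \v{C}ech--de Rham double complex over arbitrary refinements, which amounts to the same sheaf-theoretic statement; as written, the step asserting the existence of the good cover is unjustified and is where the proof breaks.
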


\begin{proof}
We prove the theorem by induction on $m$ ($=\dim M$). For $m=0$
the statement is vacuous.

Define a  complex of  presheaves on $X$ by  $\Omega_{(1)} ^j
(U):=\Omega_{(1)}^j(U\cap M)$, if $U$ is an open subset of $X$ and
denote by $\la^j$ the resulting differential sheaf. This is the
sheaf  on $X$ of {\it locally} $L^1$ forms of $M$ (locally in $X$).   Denote by
$\hn^\bullet (\la^\bullet)$ the derived complex of sheaves, i. e. the complex of sheaves obtained from the presheaves  $H^j (\Omega^\bullet _{(1)}(U))$.

 On the other hand, consider the  complex of presheaves on $X$ defined  by  $S^j(U):=C^j(M\cap U),$
for  $U$ open set of $X$, and  denote by $\St^\bullet$  the associated complex of sheaves.

 As the $\la^j$'s are soft sheaves, they are acyclic and it
 follows from the theory of spectral sequences (see for instance \cite{bredon} IV Theorem  2.2) that, if   the sheaf-mappings  $\psi^j : \hn^j(\la^\bullet) \to \hn^j
(\St^\bullet)$,  induced by the morphisms of complexes of presheaves
$\psi_U ^j:\Omega_{(1)}^j(U)\to C^j(U)$, are all isomorphisms, then the mappings $\psi_M^j$ must induce an isomorphism between the cohomology groups of the respective global sections
 of  $\St^{\bullet}$  and $\la^\bullet$. Global sections of $\la^\bullet$ are $L^1$, since, as  $M$ is bounded, $X$ is compact and then locally $L^1$ amounts to $L^1$.

To see that the mappings   $\psi^j : \hn^j(\la^\bullet) \to \hn^j
(C^\bullet)$  are all local isomorphisms,
 we have to show that for every point
$x_0$ in $X$, the mapping $\psi_{X^\ep}$ is an
isomorphism for any $\ep$ small enough. Indeed, by section
\ref{sect_cohomo_cpct_supp_local}, for any $\ep$ small enough,
we have the following commutative diagram for any $j$:
\begin{center}
     \begin{picture}(-140,0)
      \put(-180,-3){$H_{(1)} ^j (M^{\ep})$}
        \put(-90,-3){$H_{(1)} ^j (N^{\ep})$}
      \put(-180,-52){$H^j (M^\ep)$}
          \put(-90,-52){$H^j  (N^\ep)$}
      \put(-133,-50){\vector(3,0){40}}
      \put(-130,0){\vector(3,0){38}}
  \put(-165,-10){\vector(0,-1){30}}
      \put(-70,-10){\vector(0,-1){30}}
      \put(-190,-25){$\psi_{M^\ep}^j$}
      \put(-60,-25){$\psi_{N^\ep}^j$}
     \end{picture}
    \end{center}
\vskip 1.8cm

By Proposition \ref{Poincare_lem_l1_simple} (see diag $1.$), the horizontal arrows
are isomorphisms  for any $\ep$ small enough.

Observe also that $N^\ep$ is of dimension less than $m$.  By
induction on $m$, $\psi_{N^\ep}^j$ induces an isomorphism on
the cohomology groups and thus, the above commutative diagram
clearly shows that the mapping $\psi_{M^\ep} ^j$ induces
an isomorphism  as well for any $j$.
\end{proof}

\medskip

\section{Poincar\'e duality for $L^1$ cohomology}\label{sect_cor_poinc}
We draw some consequences of Theorem \ref{thm_intro}, stating some
duality results between $L^1$ and $L^\infty$ cohomology. We start
by recalling some results and providing basic definitions. We recall that, except the differential forms, all the sets and mappings are assumed to be (globally)
 subanalytic.

\subsection{Intersection homology.}\label{sect_ih}
 We recall the definition of intersection homology as it was introduced by Goresky and Macpherson \cite{ih1,ih2}.
\begin{defs}\label{del pseudomanifolds}
A subset $X\subset \R^n$ is an  {\bf $m$-dimensional pseudomanifold} if
 $X_{reg}$ is an $m$-dimensional manifold which is  dense in $X$ and  $\dim X_{sing}<m-1$.

A {\bf stratified pseudomanifold} is the data of a pseudomanifold
together with a filtration:
$$X_0\subset \dots \subset X_{m-2}= X_{m-1}\subset X_m=X,$$
such that $X_i\setminus X_{i-1}$ is either empty or a smooth
manifold of dimension $i$.

Throughout this section, the letter $X$ will denote a stratified
pseudomanifold.

A {\bf perversity} is a sequence of integers $p = (p_2, p_3,\dots, p_m)$
such that $p_2 = 0$ and $p_{k+1} = p_k$ or $p_k + 1$. A subspace
$Y \subset X$ is called  {\bf $(p;i)$-allowable} if $\dim Y \cap
X_{m-k} \leq p_k+i-k$.  Define $I^{p}C_i (x)$ as the subgroup of
$C_i(X)$ consisting of those chains $\sigma$ such that $|\sigma|$
is $(p, i)$-allowable and $|\partial \sigma|$ is $(p, i -
1)$-allowable.

 The {\bf $i^{th}$ intersection homology group of perversity $p$}, denoted
$I^{p}H_j (X)$, is the $i^{th}$ homology group of the chain
complex $I^{p}C_\bullet(X).$  The {\bf $i^{th}$ intersection cohomology group of perversity $p$}, denoted
$I^{p}H^j (X)$, is defined as $Hom (I^{p}H_j (X);\R)$.
\end{defs}

\medskip

In \cite{ih1,ih2} Goresky and MacPherson have proved that, if the
stratification is sufficiently nice (i. e. if topological
triviality holds along strata) then  these homology groups are
finitely generated and independent of the stratification. Since such
stratifications exist for subanalytic sets \cite{tesc} we
will admit this fact and shall work without specifying the
stratification.

Furthermore, Goresky and MacPherson also proved that their theory satisfy a
generalized version of Poincar\'e duality. We denote by $t$ {\bf
the maximal perversity}, i. e. $t=(0;1;\dots;m-2)$.
\medskip

\begin{thm}(Generalized Poincar\'e duality
\cite{ih1,ih2})\label{thm_poincare_ih}
Let $X$ be a compact oriented pseudomanifold and let $p$ and $q$ be
perversities with $p+q=t$. Then:
$$I^p H^j(X)=I^q H^{m-j}(X).$$
\end{thm}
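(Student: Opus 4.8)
\textbf{Proof strategy for Theorem \ref{thm_poincare_ih}.} The statement to be proved is the generalized Poincaré duality theorem of Goresky–MacPherson: for a compact oriented pseudomanifold $X$ and complementary perversities $p+q=t$, one has $I^pH^j(X)\simeq I^qH^{m-j}(X)$. Since this is a foundational result cited from \cite{ih1,ih2}, the natural plan is to reproduce the sheaf-theoretic proof rather than the original simplicial one. The plan is to construct, for each perversity $p$, a complex of sheaves $\mathbf{IC}_p^\bullet$ on $X$ whose hypercohomology computes $I^pH^\bullet(X)$, and then to exhibit a Verdier duality isomorphism between $\mathbf{IC}_p^\bullet$ and the dual complex $\mathbf{IC}_q^\bullet[m]$ (up to a shift and twist by the orientation sheaf, which is trivial since $X$ is oriented).

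First I would set up the sheaf $\mathbf{IC}_p^\bullet$ as the sheafification of the presheaf $U\mapsto I^pC^{-\bullet}(U)$ of intersection cochains (Borel–Moore intersection chains), and verify by a standard argument — using the local conelike structure of the stratified pseudomanifold, which holds here in the subanalytic category by \cite{tesc} — that this complex satisfies the Goresky–MacPherson axioms $(\mathrm{AX1})$: it is constructible with respect to the stratification, normalized on the open stratum (where it is $\R_{X_{reg}}[m]$), satisfies the support condition $\mathcal{H}^j(\mathbf{IC}_p^\bullet)_x=0$ for $j>p_k-m$ near a codimension-$k$ stratum point, and the dual cosupport condition. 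The key computation is the local one: the stalk cohomology at a point of an $(m-k)$-dimensional stratum is read off from the intersection homology of the link, which is an $(k-1)$-pseudomanifold of lower dimension, so an induction on $\dim X$ (or on the depth of the stratification) controls everything. Then one invokes the uniqueness part of the axiomatic characterization: any two complexes satisfying $(\mathrm{AX1})$ with perversity $p$ are canonically quasi-isomorphic in the derived category $D^b_c(X)$.

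The heart of the argument is then duality. I would apply the Verdier duality functor $\mathbb{D}_X$ to $\mathbf{IC}_p^\bullet$. Using the normalization and the self-duality of the dualizing complex on the oriented manifold $X_{reg}$, a direct check shows that $\mathbb{D}_X(\mathbf{IC}_p^\bullet)$, appropriately shifted by $[m]$, satisfies the support and cosupport conditions for the complementary perversity $q=t-p$ — the support condition for $p$ becomes the cosupport condition for $q$ under dualization, which is exactly why the relation $p+q=t$ appears (the maximal perversity $t$ being the "middle" for this pairing since $t_k=k-2$). By the uniqueness theorem, $\mathbb{D}_X(\mathbf{IC}_p^\bullet)[m]\simeq \mathbf{IC}_q^\bullet$ in $D^b_c(X)$. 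Taking hypercohomology and using $X$ compact so that $\mathbb{H}^j(X;\mathbb{D}_X\mathcal{F}^\bullet)\simeq \mathbb{H}^{-j}(X;\mathcal{F}^\bullet)^\vee$ yields $I^qH^{m-j}(X)\simeq (I^pH_{j}(X))^\vee \cdot$, wait — more precisely one gets $I^qH^{m-j}(X)\simeq \mathrm{Hom}(I^pH_j(X);\R)=I^pH^j(X)$, which is the claimed isomorphism (all groups being finitely generated $\R$-vector spaces, the double dual causes no trouble).

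The main obstacle I anticipate is the careful verification that the sheafified intersection cochain complex genuinely satisfies the cosupport (attaching) axiom and is constructible — this requires the local product structure along strata, i.e.\ that a neighborhood of a stratum point looks like $\R^{m-k}\times \mathrm{cone}(L)$ with $L$ the link, and that this holds compatibly enough to run the induction. In the subanalytic setting this local conical structure is guaranteed (it is exactly the topological triviality along strata that \cite{ih1,ih2} require and that \cite{tesc} provides), so the obstacle is bookkeeping rather than a genuine difficulty. An alternative, more elementary route — bypassing sheaf theory entirely — would be to use a basic-set (PL) stratification and a dual block decomposition, pairing a $p$-allowable chain with a $q$-allowable chain by intersection numbers and checking the pairing is nondegenerate cell by cell; but the sheaf-theoretic argument is cleaner and is the one I would present, simply citing \cite{ih2} for the technical lemmas on the axiomatic characterization.
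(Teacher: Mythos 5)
This theorem is not proved in the paper: it is recalled verbatim from Goresky--MacPherson \cite{ih1,ih2} (the paper only uses it as a black box to deduce Corollaries \ref{cor_poincare_duality_intro} and \ref{cor_dirichlet_de_rahm_intro}), so there is no internal argument to compare yours against. Your sketch is a correct outline of the standard sheaf-theoretic proof from \cite{ih2}: build $\mathbf{IC}_p^\bullet$, verify the support/cosupport (attaching) axioms by induction on depth using the local structure $\R^{m-k}\times \mathrm{cone}(L)$ along strata (available here by \cite{tesc}), identify the Verdier dual $\mathbb{D}_X(\mathbf{IC}_p^\bullet)[m]$ with $\mathbf{IC}_q^\bullet$ via the uniqueness part of the axiomatic characterization, and take hypercohomology on the compact $X$. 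Two small points worth making explicit if you write this up: the paper defines $I^pH^j(X)$ as $\mathrm{Hom}(I^pH_j(X);\R)$, so the cohomological statement is literally the $\R$-dual of the chain-level duality $I^pH_j(X)\simeq I^qH_{m-j}(X)$, and working over the field $\R$ is what lets you ignore torsion and double-dualize freely; and the genuinely nontrivial content, as you note, is the verification of the attaching axiom and constructibility, which is exactly the material you are deferring to \cite{ih2}, so the sketch is honest about where the work lies.
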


\medskip

\begin{exa} We will be interested in  the cases of the zero perversity $0=(0;\dots;0)$ and the maximal
perversity, which  are complement perversities.  By the above
theorem, we have for any pseudomanifold $X$ of
dimension $m$:
$$I^0 H^j(X)=I^t H^{m-j}(X).$$
\end{exa}

\medskip

\subsection{$L^\infty$-cohomology.}\label{sect_linfty}
We recall  the definition of the $L^\infty$ cohomology groups that
have been introduced by the author of the present paper in
\cite{vlinfty}. Let $M\subset \R^n$ be a smooth oriented submanifold.

\medskip

\begin{dfn}  We say that a form  $\omega$ on $M$
is $L^\infty$ if there exists a constant $C$ such that  for any $x
\in M$: $$|\omega(x)| \leq C. $$
 We denote by $\Omega^j_\infty
(M)$ the cochain complex constituted by all the $C^\infty$ $j$-forms
$\omega$ such that $\omega $ and $d\omega$ are both $L^\infty$.

The cohomology groups of this cochain complex are called the {\bf
$L^\infty$-cohomology groups of $M$} and will be denoted by
$H^\bullet _\infty(M)$. We may endow this cochain complex with the
norm:$$|\omega|_\infty:=\sup_{M} |\omega|+\sup_M |d\omega|. $$ We
also introduce the {\bf locally $L^\infty$ forms} as follows.
Given an open subset $U$  of $cl(M)$, let $\Omega_{\infty,loc}^j
(U\cap M)$ be the de Rham complex constituted by the smooth forms on $U\cap M$ locally bounded in $U$ which have a locally
bounded (in $U$) exterior derivative. This gives rise to a cohomology
theory that we shall denote $H^j _{\infty,loc}(U\cap M)$.

Similarly we define the de Rham complex $\overline{\Omega}_\infty ^\bullet (M)$ as the $L^\infty$ forms  weakly smooth and almost everywhere continuous. 
\end{dfn}

\medskip

By Theorem \ref{thm_intro_linfty}, we know that  the $L^\infty$
cohomology of a pseudomanifold coincides with its intersection
cohomology in the maximal perversity. We shall need the following theorem of \cite{vlinfty}.
 Set again $M^\ep = M\cap B^n(x_0;\ep)$ for some $x_0\in cl(M)$ fixed.

\bigskip

\begin{thm}
\label{thm_poincare}\cite{vlinfty}(Poincar\'{e} Lemma for
$L^\infty$ cohomology)
For $\ep$ positive small enough and any positive integer $j$:
$$H_\infty ^j(M^\ep)\simeq 0. $$
\end{thm}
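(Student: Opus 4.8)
The statement to prove is the Poincaré Lemma for $L^\infty$ cohomology: for $\ep>0$ small enough and any $j>0$, $H_\infty^j(M^\ep)\simeq 0$. Although this is quoted as a result of \cite{vlinfty}, the natural way to re-derive it here is to run exactly the same argument that gave Proposition \ref{pro_proprietes_de_K} and Proposition \ref{Poincare_lem_l1_simple}, but now tracking the $L^\infty$ norm instead of the $L^1$ norm. The plan is to use the Lipschitz conic structure of $X$ at $x_0$ provided by Theorem \ref{thm Lipschitz conic structure}: let $h:(0;\ep)\times N^\ep\to M^\ep$ be the homeomorphism of Definition \ref{dfn conical}, with the functions $\varphi_1,\dots,\varphi_{m-1}$ and the tame basis $\lambda_1,\dots,\lambda_{m-1}$ of $L(x_0;X_{reg})$, so that $h^*\rho_X^2\approx dt^2+\sum_i \varphi_i^2(t;x)\lambda_i^2(x)$ as in Remark \ref{rem_conical}.

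First I would form the operator $\K_\nu$ of (\ref{eq_def_alpha})--(\ref{eq_def_K}): write $h^*\omega=\omega_1+dt\wedge\omega_2$, set $\alpha(t;x):=\int_\nu^t\omega_2(s;x)\,ds$ and $\K_\nu\omega:=h^{-1*}\alpha$. The homotopy identity $\overline{d}\K_\nu-\K_\nu\overline{d}=Id-\pi_\nu^*$ from Lemma \ref{lem_proprietes_de_K_nu} holds verbatim (it uses only Stokes for weakly smooth forms and the pullback result Proposition \ref{prop_pullback_weakly smooth forms}, both norm-independent). The key new point is the $L^\infty$ estimate. Decomposing $\alpha=\sum_{I\in\I_{j-1}}\alpha_I\lambda_I$, one has $|\K\omega|\sim\sum_I |\alpha_I\circ h^{-1}|/(\varphi_I\circ h^{-1})$ exactly as in (\ref{eq_K_omega_simeq}); and decomposing $\omega_2=\sum_I\omega_{2,I}\lambda_I$ gives, as in (\ref{eq maj de omega}), the pointwise bound $|\omega_{2,I}(s;x)|\leq C|\omega(h(s;x))|\varphi_I(s;x)$. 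Since $\omega$ is $L^\infty$, say $|\omega|\leq C$, we get for $(t;x)\in Z$:
$$
\frac{|\alpha_I(t;x)|}{\varphi_I(t;x)}\leq\frac{1}{\varphi_I(t;x)}\int_\nu^t|\omega_{2,I}(s;x)|\,ds
\leq C\,\frac{1}{\varphi_I(t;x)}\int_\nu^t\varphi_I(s;x)\,ds\leq C|t-\nu|,
$$
where the last inequality uses monotonicity of $\varphi_I$ in $t$ (item (\ref{item_dfn_phi_decreasing}) of Definition \ref{dfn conical}): $\varphi_I(s;x)\leq\varphi_I(t;x)$ for $\nu\leq s\leq t$. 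Hence $|\K_\nu\omega|\leq C\ep$ on $M^\ep$, so $\K_\nu$ is bounded for the $L^\infty$ norm, uniformly in $\nu$.

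Then, as in Proposition \ref{Poincare_lem_l1_simple}, I would conclude: given a closed $\omega\in\overline{\Omega}_\infty^j(M^\ep)$ with $j>0$, I need $\pi_\nu^*\omega\to 0$ (or at least becomes cohomologically controllable) as $\nu\to 0$. Unlike the compact-support case, here $\pi_\nu^*\omega$ need not vanish; instead one argues that $\pi_\nu^*\omega$ is, for $\nu$ small, cohomologous to a form pulled back from the link $N^\ep$, and then either iterates the construction on $N^\ep$ (induction on dimension, exactly as in the proof of Theorem \ref{thm_l1_psi_M_isom}) or — cleanest — takes the stationary/convergent limit: since $|\pi_\nu^*\omega|$ is controlled (the metric estimate (\ref{eq_dfn_conical}) shows $|\pi_\nu^*\omega|\leq C|\omega|$ is not automatic, but $d_x\pi_\nu$ contracts in the $\varphi_i$-directions so the pulled-back components along $\lambda_I$ are damped by $\varphi_I(\nu;x)/\varphi_I(t;x)\to 0$ as $\nu\to 0$ when $j>0$), one shows $\pi_\nu^*\omega\to 0$ in $L^\infty$. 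Combined with $\overline{d}\K_\nu\omega=\omega-\pi_\nu^*\omega$ for closed $\omega$, passing to the limit gives $\omega=\overline{d}(\lim_\nu\K_\nu\omega)$, exact; finally invoke the weakly-smooth vs smooth comparison (the $L^\infty$ analogue of Proposition \ref{pro l1 isom smooth}, proved the same way) to make $\omega$ exact in $\Omega_\infty^\bullet(M^\ep)$.

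\textbf{Main obstacle.} The delicate step is not the homotopy formula but showing that $\pi_\nu^*\omega\to 0$ in the $L^\infty$ norm as $\nu\to 0$ for closed $j$-forms with $j>0$ — equivalently, that the only surviving piece of $\omega$ under the retraction toward the vertex lives in degree $0$. This is where one genuinely uses that \emph{all} the $\varphi_i$ decrease to zero: a nonzero $\lambda_I$-component with $I\neq\emptyset$ (forced when $j\geq 1$) is multiplied under $h^{-1*}$ by $1/\varphi_I$, while $\pi_\nu^*$ replaces the "radial height" by $\nu$, producing the contraction factor $\varphi_I(\nu;x)/\varphi_I(t;x)$. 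Making this rigorous in a single estimate (rather than via the dimension induction) requires care, since $\varphi_I(t;x)$ may degenerate as $t\to\ep$ is fixed but $x$ varies; item (\ref{item_dfn_phi_decreasing})'s lower bound away from $\{t=0\}$ (namely (\ref{item_dfn_metric_conical}) resp. item 2 of Definition \ref{dfn conical}) rescues the situation on $M^\ep$ minus a neighborhood of the vertex, and near the vertex the $L^1$-type Lebesgue dominated convergence used in Proposition \ref{pro_proprietes_de_K} is replaced by the uniform bound above. I expect the shortest correct writeup to simply mirror Proposition \ref{pro_proprietes_de_K} and then cite the inductive sheaf-theoretic machinery of Theorem \ref{thm_l1_psi_M_isom}, or else to refer directly to \cite{vlinfty} for this precise statement.
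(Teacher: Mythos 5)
A preliminary remark: the paper does not prove this statement — it is quoted from \cite{vlinfty} — so there is no in‑paper argument to compare yours against line by line; what follows assesses your reconstruction on its own terms. Your route is essentially the right one, and it is the natural $L^\infty$ analogue of Proposition \ref{pro_proprietes_de_K}. The decisive point, which you identify correctly, is that the radial primitive must be taken \emph{from the vertex} ($\nu\to 0$, rather than $\nu\to\ep$ as for the compactly supported $L^1$ operator $\K$): since $\varphi_I(s;x)\leq\varphi_I(t;x)$ for $s\leq t$ by item (\ref{item_dfn_phi_decreasing}) of Definition \ref{dfn conical}, one gets $|\alpha_I(t;x)|/\varphi_I(t;x)\leq C|\omega|_\infty\,|t-\nu|$, so the operator is $L^\infty$‑bounded uniformly in $\nu$. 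Had you integrated downward from $\ep$, the same ratio would involve $\int_t^\ep\varphi_I(s;x)\,ds\,/\,\varphi_I(t;x)$, which blows up as $t\to 0$; so the choice of base point is not cosmetic, and your estimate is the correct one.

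The one genuinely imprecise step is the assertion that $\pi_\nu^*\omega\to 0$ in the $L^\infty$ norm. It does not: at a point $h(t;x)$ with $t$ close to $\nu$ the damping factor $\varphi_I(\nu;x)/\varphi_I(t;x)$ is close to $1$ (and exceeds $1$ for $t<\nu$), so $\sup_{M^\ep}|\pi_\nu^*\omega|$ remains comparable to $|\omega|_\infty$; the convergence is only pointwise. The repair stays entirely within your framework: define $\K_0\omega:=h^{-1*}\bigl(\int_0^t\omega_2(s;x)\,ds\bigr)$ outright. Your estimate shows $\K_0$ is $L^\infty$‑bounded, and the analogue of (\ref{eq_K_nu_homot_operator}) becomes $\overline{d}\K_0-\K_0\overline{d}=Id-T_0$, where $T_0\omega=h^{-1*}\bigl(\lim_{s\to 0}\omega_1(s;\cdot)\bigr)$ is the trace at the vertex. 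For every $j\geq 1$ this trace vanishes, because each component satisfies $|\omega_{1,I}(s;x)|\leq C|\omega|_\infty\,\varphi_I(s;x)$ with $I\neq\emptyset$ (same reasoning as (\ref{eq maj de omega}), applied to $\omega_1$), and $\varphi_I(s;x)\to 0$ as $s\to 0$. Equivalently, one may pass to the limit $\nu\to 0$ in $\overline{d}\K_\nu\omega=\omega-\pi_\nu^*\omega$ in the weak (distributional) sense, where pointwise convergence together with the uniform bound $|\pi_\nu^*\omega|\leq C|\omega|_\infty$ suffices. With this correction, and the $L^\infty$ analogue of Proposition \ref{pro l1 isom smooth} that you invoke at the end, the argument closes without any dimension induction or sheaf‑theoretic machinery.
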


\medskip

\subsection{Poincar\'e duality for $L^1$ cohomology}We give some corollaries  of Theorem \ref{thm_intro}. Thanks to  Goresky
and MacPherson generalized Poincar\'e duality, we get an explicit
topological criterion on the singularity to determine whether
$L^1$ cohomology is Poincar\'e dual to $L^\infty$ cohomology.

\medskip

\begin{cor}
Let $X$ be a compact oriented pseudomanifold. If $H^j(X_{reg})\simeq
I^0H^j(X)$ then $L^\infty$ cohomology is Poincar\'e dual to $L^1$
cohomology in dimension $j$, i. e. $$H^j_\infty(X_{reg})\simeq
H_{(1)}^{m-j}(X_{reg}).$$
\end{cor}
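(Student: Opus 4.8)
The plan is to chain together the de Rham theorem for $L^1$ cohomology (Theorem \ref{thm_intro}, i.e. $H_{(1)}^{m-j}(X_{reg})\simeq H^{m-j}(X_{reg})$), the de Rham theorem for $L^\infty$ cohomology (Theorem \ref{thm_intro_linfty}, i.e. $H_\infty^j(X_{reg})\simeq I^tH^j(X)$), the hypothesis $H^j(X_{reg})\simeq I^0H^j(X)$, and the Goresky--MacPherson generalized Poincar\'e duality (Theorem \ref{thm_poincare_ih}) between complementary perversities. Concretely, I would start from $H^j_\infty(X_{reg})$, rewrite it via Theorem \ref{thm_intro_linfty} as $I^tH^j(X)$, apply the Example following Theorem \ref{thm_poincare_ih} (with $0+t=t$) to get $I^tH^j(X)\simeq I^0H^{m-j}(X)$, then use the standing hypothesis (in dimension $m-j$, which is what is meant by ``in dimension $j$'' for the dual side) to identify $I^0H^{m-j}(X)$ with $H^{m-j}(X_{reg})$, and finally invoke Theorem \ref{thm_intro} to reach $H_{(1)}^{m-j}(X_{reg})$.

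The only genuine point requiring care is the bookkeeping of which cohomological degree the hypothesis $H^j(X_{reg})\simeq I^0H^j(X)$ must be invoked in: the chain produces $I^0H^{m-j}(X)$, so we need the hypothesis at degree $m-j$ rather than at $j$. This is a cosmetic issue of indexing (and in any case, when the hypothesis is stated as holding ``in dimension $j$'' one typically means in the relevant degree appearing on both sides); I would simply phrase the corollary's conclusion so that the identity $H^j_\infty(X_{reg})\simeq H_{(1)}^{m-j}(X_{reg})$ follows from applying $H^k(X_{reg})\simeq I^0H^k(X)$ at $k=m-j$. One should also note that all groups in sight are finite-dimensional real vector spaces (so $\mathrm{Hom}(-,\R)$ is an involution and the intersection (co)homology is a topological invariant independent of stratification, as recorded after Theorem \ref{thm_poincare_ih}), which is what makes the duality isomorphism go in both directions and legitimizes reading Theorem \ref{thm_poincare_ih} as an isomorphism of cohomology groups.

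I do not anticipate a real obstacle here: the corollary is a formal concatenation of previously established isomorphisms, so the ``proof'' is essentially a one-line diagram chase. If anything, the mild subtlety is making sure the maps compose correctly --- e.g. that the de Rham isomorphism of Theorem \ref{thm_intro} is the one induced by integration on simplices and hence compatible with the natural map in Theorem \ref{thm_intro_linfty} --- but since the statement only asserts abstract isomorphism of groups, not compatibility of a particular pairing, even that compatibility is not needed for this corollary (it would be needed for the stronger statement that integration realizes the duality, which is Theorem \ref{thm_intro_l1_stokes_property} and the remarks around Corollary \ref{cor_poincare_duality_intro}). So the write-up is just: by Theorem \ref{thm_intro_linfty}, $H^j_\infty(X_{reg})\simeq I^tH^j(X)$; by generalized Poincar\'e duality $I^tH^j(X)\simeq I^0H^{m-j}(X)$; by hypothesis $I^0H^{m-j}(X)\simeq H^{m-j}(X_{reg})$; and by Theorem \ref{thm_intro}, $H^{m-j}(X_{reg})\simeq H_{(1)}^{m-j}(X_{reg})$. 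Composing gives the claimed isomorphism.
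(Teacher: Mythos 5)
Your proposal is correct and is exactly the paper's argument: the author's proof is the one-line remark that the corollary follows from Theorems \ref{thm_intro}, \ref{thm_intro_linfty} and Goresky--MacPherson duality, which is precisely the chain of isomorphisms you spell out (and your observation that the hypothesis is really invoked in degree $m-j$ is a fair reading of the statement's indexing). No discrepancy to report.
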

\begin{proof}
This  is a consequence of Theorems \ref{thm_intro},
\ref{thm_intro_linfty} and Goresky and MacPherson's generalized
Poincar\'e duality.
\end{proof}

\medskip

\begin{cor}Let $M \subset \R^n$ be an oriented  bounded $C^\infty$ submanifold.
 If  $\dim \delta M=k$ then $L^1$ cohomology is Poincar\'e dual to  $L^\infty$
cohomology in dimension $j<m-k-1$, i. e. for any positive integer  $j<m-k-1$:
$$H_{(1)} ^j(M) \simeq  H_\infty^{m-j} (M).$$
\end{cor}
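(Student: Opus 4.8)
The plan is to reduce this corollary to a statement about intersection homology via the two de Rham theorems already at our disposal. First I would invoke Theorem \ref{thm_l1_psi_M_isom} (equivalently Theorem \ref{thm_intro}): the $L^1$ cohomology $H_{(1)}^j(M)$ is isomorphic to the singular cohomology $H^j(M)$ for every $j$. Dually, by Theorem \ref{thm_intro_linfty}, the $L^\infty$ cohomology $H_\infty^{m-j}(M)$ is isomorphic to $I^tH^{m-j}(\overline{M})$, provided $\overline{M}=cl(M)$ is a pseudomanifold. (Here one must first remark that $cl(M)$ is a pseudomanifold: $M=X_{reg}$ is a dense $m$-manifold and $\delta M$ has dimension $k<m-1$ since $k<m-k-1$ forces $k<m-1$.) So the corollary will follow once we know $H^j(M)\simeq I^tH^{m-j}(\overline{M})$, and by Goresky--MacPherson's generalized Poincar\'e duality (Theorem \ref{thm_poincare_ih}), $I^tH^{m-j}(\overline{M})\simeq I^0H^j(\overline{M})$. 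Thus the whole matter comes down to showing $H^j(M)\simeq I^0H^j(\overline{M})$ in the range $j<m-k-1$.

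The key step is therefore the claim that, when $\dim X_{sing}=k$, the zero-perversity intersection homology of $X$ coincides with the ordinary homology of $X_{reg}$ in degrees $i<m-k-1$ (and dually for cohomology). This is essentially the observation made in the paragraph after Corollary \ref{cor_poincare_duality_intro}. To prove it I would stratify $X$ so that $X_{sing}$ is the union of strata of dimension $\leq k$, and compare the complex $I^0C_\bullet(X)$ with $C_\bullet(X_{reg})$. For the zero perversity, a chain $\sigma$ is allowable iff $\dim(|\sigma|\cap X_{m-c})\leq i-c$ and $\dim(|\partial\sigma|\cap X_{m-c})\leq i-1-c$ for all $c\geq 2$. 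Using a general-position / transversality argument for subanalytic chains (moving cycles off low-dimensional strata), one shows that for $i\leq m-k-1$ every singular $i$-cycle in $X$ is homologous to one supported in $X_{reg}$, and two such that bound in $X$ already bound by an allowable $(i+1)$-chain; this gives $I^0H_i(X)\simeq H_i(X_{reg})$ for $i<m-k-1$. Dualizing with $\mathrm{Hom}(-,\R)$ yields the cohomological statement.

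Assembling the pieces: for $j<m-k-1$,
$$H_{(1)}^j(M)\simeq H^j(M)\simeq H^j(X_{reg})\simeq I^0H^j(X)\simeq I^tH^{m-j}(X)\simeq H_\infty^{m-j}(X_{reg})\simeq H_\infty^{m-j}(M),$$
where the first isomorphism is Theorem \ref{thm_intro}, the third is the intersection-homology range computation just described, the fourth is Theorem \ref{thm_poincare_ih}, and the fifth is Theorem \ref{thm_intro_linfty}; orientability of $M=X_{reg}$ (hence of $X$) is exactly what is needed to apply generalized Poincar\'e duality.

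The main obstacle I anticipate is the range computation $I^0H_i(X)\simeq H_i(X_{reg})$ for $i<m-k-1$: one must carry out the subanalytic transversality argument carefully, checking that pushing a cycle (and a bounding chain) off the $k$-dimensional singular locus keeps all the perversity-$0$ allowability inequalities satisfied for \emph{every} codimension $c$ simultaneously, not just for the top stratum. The boundary condition on $(i-1)$-chains is the delicate point, since it couples the degree of the chain with the codimension of each stratum; the strict inequality $j<m-k-1$ (as opposed to $j\leq m-k-1$) is precisely the slack that makes the argument go through. Everything else is a formal chase through the isomorphisms already established in the paper.
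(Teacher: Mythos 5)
Your top-level reduction is exactly the paper's: both arguments chain together Theorem \ref{thm_intro}, Theorem \ref{thm_intro_linfty} and Goresky--MacPherson duality (Theorem \ref{thm_poincare_ih}), so that everything rests on the identification $I^0H_j(X)\simeq H_j(X_{reg})$ for $j<m-k-1$, where $X=cl(M)$. The gap is in that key step, which you explicitly defer as ``the main obstacle'': you propose a general-position/transversality argument for subanalytic chains, pushing cycles and bounding chains off the singular strata while tracking all the allowability inequalities. That argument is not carried out, and it is also unnecessary. For the zero perversity the allowability condition reads $\dim\left(|\sigma|\cap X_{m-c}\right)\leq i-c$ for every $c\geq 2$; since $\dim X_{sing}=k$, every singular stratum has codimension $c\geq m-k$, so for $i<m-k$ the right-hand side is negative and the condition forces $|\sigma|\cap X_{sing}=\emptyset$. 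Conversely any chain supported in $X_{reg}$ is trivially allowable. Hence $I^0C_i(X)=C_i(X_{reg})$ \emph{as chain complexes} for all $i<m-k$; applying this in degrees $j$ and $j+1$ (which is where the hypothesis $j<m-k-1$, rather than $j<m-k$, is used) gives $I^0H_j(X)=H_j(X_{reg})$ with no homotopy or transversality at all. This is the paper's one-line argument, and it is the idea missing from your proposal.

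Two smaller points. First, there is no need to ``move two cycles that bound in $X$ to ones that bound by an allowable chain'': once the chain groups themselves coincide in the relevant degrees, nothing remains to be compared. Second, your remark that $k<m-k-1$ forces $k<m-1$ (so that $X$ is a pseudomanifold) is fine, but note the paper simply dismisses the case $k\geq m-1$ as trivial since then no positive $j$ satisfies $j<m-k-1$. With the chain-level identity substituted for your transversality sketch, the rest of your argument is correct and coincides with the paper's.
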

\begin{proof} We may assume $k<m-1$ since otherwise the result is trivial. Set $X=cl(M)$ and observe that $X$ is a pseudomanifold.
Fix a Whitney $(b)$ stratification of $X$ (see \cite{bcr, tesc} for the construction of such stratifications) such that $X$ is a stratified pseudomanifold.  By definition of
$0$-allowable chains (see section \ref{sect_ih}), the
support of a singular chain $\sigma \in I^0 C_{j} (X)$  may not
intersect the strata of the singular locus of dimension less than
$m-j$.  If $j<m-k$ (and hence $k<m-j$) then there is no stratum of dimension bigger
or equal to $(m-j)$ and thus $|\sigma|$ must lie entirely in
$X_{reg}$ and therefore $$I^0C_{j} (X)=C_{j}(X_{reg}).$$

 Hence  if $j<m-k-1$, the same applies to $(j+1)$ and therefore $$I^0H_{j} (X)=H_{j}(X_{reg}).$$
 The result follows
from the preceding corollary.
\end{proof}

This corollary clearly implies Corollary
\ref{cor_poincare_duality_intro}.

\section{Lefschetz duality for $L^1$ cohomology.}\label{sect_dirichlet}
We are going to investigate Lefschetz duality. It means that we are going to consider $L^1$ forms satisfying boundary conditions. Our duality result will relate the cohomology of these forms to the cohomology of $L^\infty$ forms (Theorem \ref{thm_Poincare duality_dirichlet}). 

We first  define and study the de Rham complex of Dirichlet $L^1$ forms. In section \ref{sect_pd_dirichlet}, we establish Lefschetz duality for $L^1$ cohomology.

\subsection{Dirichlet $L^1$-cohomology groups.}\label{sect_Dirichlet_L^1-cohomology_groups.}
In this section,  $M$ is  an orientable  submanifold of $\R^n$ (not necessarily bounded) and $X$ will stand for its topological closure. We are going to consider $L^1$ forms with compact support. We recall that the support in $X$ of a $L^1$ form on $M$ is defined as the closure {\it in}  $X$ of the set of points at which this form is nonzero. Let $V\subset X$ be open.

\begin{dfn}
 We shall say that $\omega \in \ws _{(1)} ^j(M)$
has the {\bf $L^1$ Stokes' property in $V$} if for any $\alpha \in
\ws _{\infty,V}^{m-j-1}(M)$ we have:
\begin{equation}\label{eq_l1_stokes_property}\int_{M}
\omega\wedge \db \alpha=(-1)^{j+1} \int_{M} \db \omega\wedge
\alpha.\end{equation}

The de Rham complex of weakly smooth  $L^1$ forms of $M$  satisfying this property (and whose weak exterior derivative  satisfy this property as well) is called the complex of (weakly smooth) {\bf  Dirichlet $L^1$ forms on $M$} and is denoted $\ws_{(1)}^j(M ; V\cap  \delta M)$.  The subcomplex of the $C^\infty$ such forms is denoted  $\Omega_{(1)}^j(M ; V \cap  \delta M)$.

 As before, we denote by  $\ws_{(1),X}^j(M ; V \cap  \delta M)$ and  $\Omega_{(1),X}^j(M ; V \cap  \delta M)$ the subcomplexes of the forms having compact support in $X$.
\end{dfn}

\begin{rem}\label{rem_cpct_support}
If $\omega$ has compact support in $V$ and satisfies the $L^1$ Stokes' property in $V$ then clearly  (\ref{eq_l1_stokes_property})  holds for any  $\alpha \in
\ws _{\infty}^{m-j-1}(M)$. 
\end{rem}

 If $ K$ denotes a
compact manifold with boundary $\partial K$,  the relative de Rham complex
of differential forms $\Omega^j (K;\partial K)$ is  usually defined
as the set of $j$-forms $\omega$ on $K$ such that
$\omega_{|\partial K}\equiv 0$.   However, the smooth forms of the pair $(K;\partial K)$
may also be characterized as the smooth forms satisfying (\ref{eq_l1_stokes_property}) for
any smooth $L^\infty$ form $\alpha$ on $M$.
The Dirichlet $L^1$ cohomology defined above is therefore  completely analogous to the one of compact smooth manifolds.

In the case of non-compact manifolds,  it is not  possible to require that the forms vanish at the singularities  since the forms are not defined on
$\delta M$. If one wants  a similar characterization as in the case of compact manifolds with boundaries, we have to
require a condition near $\delta M$ and pass to the limit.

For this purpose, choose an exhaustion function $\rho:X\to \R^+$,
that is to say, a positive $C^2$ function  on $M$ tending
to zero as we approach $\delta M$. Then $\{\rho\geq
\varepsilon\}$ is a manifold with boundary $\{\rho=\ep\}$. Given
$\omega \in \ws_{(1)}^{j}(M)$, we may define an operator
on $\ws_{\infty,X} ^{m-j-1}(M)$
by:\begin{equation}\label{eq_def de lomega}l_\omega
(\alpha):=\lim_{\ep \to 0} \int_{\rho=\ep} \omega \wedge \alpha,
\end{equation}
for $\alpha \in \ws_{\infty,X} ^{m-j-1} (M)$. It is easy to see (by Stokes' formula)
that if $\alpha\in \ws_{\infty,X} ^{m-j-1} (M)$ and $\omega \in
\ws_{(1)} ^j (M)$ then the latter limit exists and that:
$$\int_{M} \omega\wedge \db \alpha=(-1)^{j+1}\int_{M}
\db \omega\wedge \alpha+l_\omega(\alpha).$$

 In particular the  limit in (\ref{eq_def de lomega}) is
independent of the exhaustion function $\rho$. Observe also that
$l_\omega$ is a bounded operator on $(\ws_{\infty,X} ^j
(M);|.|_\infty)$.

\medskip
\begin{dfn}\label{dfn_norme_partial}Set:$$|\omega|_{1,\delta}:=|l_\omega|,$$ where $|l_\omega|$
denotes the operator norm of $l_\omega$.\end{dfn}

Now, it follows from the definitions that if
$|\omega|_{1,\delta}=0$  if and only if the $L^1$ Stokes' property holds
for $\omega$. Hence, we get the following characterization of Dirichlet $L^1$ forms: \begin{equation}\label{eq_l1_rho}\ws_{(1)}
^\bullet (M;\delta M) = \{\omega \in
\ws_{(1)}^{\bullet}(M):
|\omega|_{1,\delta}=|\db \omega|_{1,\delta}=0 \}.\end{equation}

This characterization will be very useful to check that the $L^1$ Stokes property holds later on.

\begin{pro}\label{pro l1 isom smooth_dir}
The inclusions $\Omega_{(1),X} ^j(M;\delta M) \hookrightarrow
\overline{\Omega}_{(1),X} ^j(M;\delta M)$  and $\Omega_{(1)} ^j(M;\delta M) \hookrightarrow
\overline{\Omega}_{(1)} ^j(M;\delta M)$  induce isomorphisms in
cohomology.
\end{pro}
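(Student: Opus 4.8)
The plan is to follow the same strategy as in the proof of Proposition~\ref{pro l1 isom smooth}, i.e.\ to smooth a given weakly smooth form by a locally finite sequence of corrections $\theta_i$ supported in shells of an exhaustion of $M$, but to carry out the argument in such a way that all the intermediate forms produced retain the Dirichlet property (i.e.\ have vanishing $|\cdot|_{1,\delta}$). Concretely, given $\alpha \in \overline{\Omega}_{(1)}^j(M;\delta M)$ with $\overline{d}\alpha$ smooth, one wants $\theta \in \overline{\Omega}_{(1)}^{j-1}(M;\delta M)$ such that $\alpha + \overline{d}\theta$ is $C^\infty$. Since the correction forms $\theta_i$ in the proof of Proposition~\ref{pro l1 isom smooth} are obtained by multiplying a locally constructed primitive by a cut-off function $\phi$ whose support lies in $\mathrm{int}(K_i)\setminus K_{i-2}$, each $\theta_i$ has compact support \emph{in $M$}, hence is automatically a Dirichlet $L^1$ form (its support stays away from $\delta M$, so $l_{\theta_i}\equiv 0$, and likewise for $\overline{d}\theta_i$). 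Therefore the same construction works verbatim once we know that the starting form $\alpha$ being Dirichlet is preserved; the key point to verify is that adding an $\overline{d}$-exact form $\overline{d}\theta_i$ with $\theta_i$ compactly supported in $M$ does not destroy the Dirichlet property, which is immediate from the characterization (\ref{eq_l1_rho}): $|\alpha + \overline{d}\theta_i|_{1,\delta} = |\alpha|_{1,\delta}$ since $l_{\overline{d}\theta_i}=0$, and the same for $\overline{d}(\alpha+\overline{d}\theta_i)=\overline{d}\alpha$.

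First I would fix the exhaustion $\rho$ as in Section~\ref{sect_Dirichlet_L^1-cohomology_groups.} and the compact manifolds with boundary $K_i = \{\rho \geq \ep_i\}$ for a decreasing sequence $\ep_i \to 0$, with $K_i$ in the interior of $K_{i+1}$ and $\bigcup K_i = M$. Then I would run the induction $(\mathrm{H}_i)$ exactly as in the proof of Proposition~\ref{pro l1 isom smooth}: on each $K_i$, which is a smooth compact manifold with boundary, Lemma~\ref{lem_l1_manifold} provides smooth primitives, so one can successively modify $\alpha$ by $\overline{d}\theta_k$ (with $\theta_k$ supported in $\mathrm{int}(K_k)\setminus K_{k-2}$, of $L^1$-norm $\le 2^{-k}$, closed) to make the partial sums smooth on a neighbourhood of $K_{i-1}$; the convolution step producing $\gamma_\ep$ from $\gamma$ is unchanged. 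The only new bookkeeping is to observe at each stage that the running form $\alpha_i = \alpha + \sum_{k\le i}\overline{d}\theta_k$ still satisfies $|\alpha_i|_{1,\delta} = |\overline{d}\alpha_i|_{1,\delta} = 0$, which as noted above is automatic because every $\theta_k$ is compactly supported in $M$ and hence $l_{\overline{d}\theta_k}=0$. The locally finite sum $\theta = \sum_i \theta_i$ then also has $l_\theta \equiv 0$ and $l_{\overline{d}\theta}\equiv 0$ (the sum is locally finite and $l$ is computed as a limit over $\{\rho = \ep\}$, on which only finitely many $\theta_i$ are nonzero), so $\theta \in \overline{\Omega}_{(1)}^{j-1}(M;\delta M)$ and $\alpha + \overline{d}\theta$ is $C^\infty$ and still Dirichlet, i.e.\ lies in $\Omega_{(1)}^{j-1}(M;\delta M)$. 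This proves surjectivity on cohomology of the inclusion $\Omega_{(1)}^\bullet(M;\delta M)\hookrightarrow \overline{\Omega}_{(1)}^\bullet(M;\delta M)$; injectivity follows by applying the same statement one degree down to a weakly smooth Dirichlet primitive of a smooth Dirichlet closed form, exactly as in Proposition~\ref{pro l1 isom smooth}.

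For the compactly supported version $\Omega_{(1),X}^\bullet(M;\delta M)\hookrightarrow \overline{\Omega}_{(1),X}^\bullet(M;\delta M)$ the argument is the same but simpler: a form with compact support in $X$ vanishes near infinity in $M$, so only finitely many shells $K_i$ are relevant and the induction terminates; one must additionally check that the smoothing corrections can be chosen with support in a fixed compact neighbourhood of the original support, which is clear since the $\theta_i$ are built from cut-offs of the given form and its primitives, all supported where $\alpha$ is. The main obstacle, such as it is, is purely notational: keeping track simultaneously of the support condition, the $L^1$-smallness of the corrections, and the vanishing of $l_{\theta_i}$ and $l_{\overline{d}\theta_i}$; there is no genuinely new analytic difficulty beyond Proposition~\ref{pro l1 isom smooth} and Lemma~\ref{lem_l1_manifold}, because the Dirichlet condition is preserved for free under the specific (compactly-in-$M$-supported) corrections that the proof uses.
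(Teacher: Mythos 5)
Your proposal is correct and follows exactly the route the paper takes: the paper's proof of Proposition \ref{pro l1 isom smooth_dir} simply states that the argument of Proposition \ref{pro l1 isom smooth} applies, and you have supplied the one detail that makes this work, namely that each correction $\theta_i$ is compactly supported in $M$ itself, so $l_{\theta_i}$ and $l_{\overline{d}\theta_i}$ vanish and the Dirichlet condition (\ref{eq_l1_rho}) is preserved at every stage and in the locally finite sum. (The cleanest way to see $l_\theta=0$ for the infinite sum is via the absolutely convergent integral identity defining $l_\theta$ rather than the level sets, and for the compactly supported case the induction need not terminate since the shells accumulate on $\delta M$, but these are cosmetic points.)
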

\begin{proof}
The argument used in the proof of Proposition \ref{pro l1 isom smooth}
also applies for Dirichlet cohomology.
\end{proof}

Let $\delta M^\ep:=\delta M \cap X^\ep$.  We can also make use of Proposition \ref{pro_neigh} in the same way as in section \ref{sect_cohomo_cpct_supp_local} to get the following exact sequence:
\begin{equation}\label{eq_long_dirichlet_local}
\dots \to  H^{j-1}_{(1)} (N^\ep;\delta N^\ep ) \to H^j_{(1),X^\ep} (M^\ep;\delta M^\ep )  \to H^j_{(1)} (M^\ep;\delta M^\ep ) \to \dots  \end{equation}

\subsection{Lefschetz duality for Dirichlet $L^1$ cohomology and the de Rham theorem.}\label{sect_pd_dirichlet}
 Let $M\subset \R^n$ be an orientable  submanifold of dimension $m$, set $X=cl(M)$ and take $x_0\in X$.
 Set again  $M^{\ep}:=M \cap B^n(x_0;\ep)$  and $N^\ep:=M\cap S^{n-1}(x_0;\ep)$.
  \subsection*{The operator $\K_0$.} We are going to construct a homotopy operator:
   $$\K_0:\overline{\Omega}_{(1)} ^m (M^\ep;\delta M^\ep) \to \overline{\Omega}_{(1)} ^{m-1} (M^\ep;\delta M^\ep),$$
    ($m=\dim M$)  based on the operator $\K_\nu$ introduced  in section \ref{sect_hom_hop}.

\begin{pro} On $\overline{\Omega}_{(1)} ^m (M^\ep)$: $$\lim_{\nu,t \to 0} |\K_\nu \omega -\K_t \omega|_{1} =0,
$$ 
and consequently $\lim_{\nu \to 0} \K_\nu$ 
defines   a homotopy operator $\K_0 :\overline{\Omega}^m _{(1)} (M^\ep)\to \overline{\Omega}^{m-1} _{(1)} (M^\ep)$.
\end{pro}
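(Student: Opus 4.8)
The plan is to show that the family $(\K_\nu\omega)_{\nu>0}$ is Cauchy in the $L^1$ norm when $\omega$ is a top-degree ($j=m$) form, so that it converges to a well-defined limit $\K_0\omega$, and then to verify that the homotopy identity passes to the limit. The key simplification in degree $m$ is that $\omega_2$ (the component of $h^*\omega$ along $dt$) is an $(m-1)$-form on the $(m-1)$-dimensional link, hence of top degree there; so in the decomposition $\omega_2=\sum_{I}\omega_{2,I}\lambda_I$ there is exactly one multi-index $I=\hat I=\emptyset$-complement, namely $I=(1,\dots,m-1)$, and $\varphi_{\hat I}\equiv 1$. Thus the estimate (\ref{eq_calcul_L1norm_KIII}), with $\varphi_{\hat I}(t;x)$ replaced by the constant $1$, already gives that $\K_\nu\omega$ is $L^1$ with norm controlled by $|\omega|_1$, uniformly in $\nu$; moreover the same computation run between two parameters $\nu$ and $t$ controls $|\K_\nu\omega-\K_t\omega|_1$.

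Concretely, for $\nu\le t$ one has $\K_\nu\omega-\K_t\omega=h^{-1*}\beta$ where $\beta(t';x)=\int_\nu^t \omega_2(s;x)\,ds$ involves only integration over the small slab $s\in[\nu,t]$ (more precisely, using $\alpha(t';x)=\int_\nu^{t'}\omega_2$ one gets that the difference $\K_\nu\omega-\K_t\omega$ is, up to the pullback, the constant-in-$t'$ form $\int_\nu^t\omega_2(s;x)\,ds$). Changing variables by $h$ and running the chain of inequalities exactly as in (\ref{eq_calcul_L1norm_K})–(\ref{eq_calcul_L1norm_KIII}) — now with $\varphi_{\hat I}=1$ — yields
$$|\K_\nu\omega-\K_t\omega|_1 \le C\int_{h((\nu;t)\times N^\ep)} |\omega|,$$
which tends to $0$ as $\nu,t\to 0$ because $\omega$ is $L^1$ and $\mathrm{vol}\big(h((\nu;t)\times N^\ep)\big)\to 0$ (indeed the sets $h((0;s)\times N^\ep)$ shrink to $\{x_0\}$ as $s\to0$, so their $\omega$-mass goes to zero by absolute continuity of the integral). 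This shows the family is Cauchy, hence the stationary-in-spirit limit $\K_0\omega:=\lim_{\nu\to0}\K_\nu\omega$ exists in $\overline{\Omega}^{m-1}_{(1)}(M^\ep)$; one also needs the companion estimate applied to $d\omega$ (which is an $L^1$ $(m+1)$-form, hence zero on the $m$-manifold $M^\ep$, so actually $\K_\nu d\omega=0$ and there is nothing to check there) and to the derivatives, but since $\overline{d}\K_\nu-\K_\nu\overline{d}=\mathrm{Id}-\pi_\nu^*$ by Lemma \ref{lem_proprietes_de_K_nu}, once $\K_\nu\omega$ converges one gets that $\overline{d}\K_\nu\omega$ converges too, with limit $\omega-\lim\pi_\nu^*\omega$. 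Passing to the limit in the homotopy identity then gives $\overline{d}\K_0-\K_0\overline{d}=\mathrm{Id}-\lim_{\nu\to0}\pi_\nu^*$, and one checks $\pi_\nu^*\omega\to 0$ in $L^1$ for top-degree forms (again because $\pi_\nu$ collapses onto the link $N^\nu$ of shrinking measure), so $\K_0$ is a genuine homotopy operator $\overline{d}\K_0-\K_0\overline{d}=\mathrm{Id}$. Finally, the estimate on $l_\omega$-type boundary terms shows $\K_0\omega$ preserves the Dirichlet condition, so $\K_0$ restricts to the subcomplex $\overline{\Omega}_{(1)}^\bullet(M^\ep;\delta M^\ep)$ as claimed.

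The main obstacle I anticipate is the bookkeeping at the boundary: verifying that $\K_0\omega$ and $\overline{d}\K_0\omega$ actually satisfy the $L^1$ Stokes' property in $M^\ep$ (so that $\K_0$ lands in the Dirichlet complex, not merely in $\overline{\Omega}^{m-1}_{(1)}(M^\ep)$), and correctly controlling the term $\pi_\nu^*\omega$ and the limiting behaviour of $\int_{N^t}|\K_\nu\omega|$ uniformly. Remark \ref{rem_K} already provides $\int_{N^t}|\K_\nu\omega|\le C|\omega|_1$ uniformly, and (\ref{eq_int_komega_link}) — valid here since $j=m$ is excluded, so one must instead argue directly that the relevant $\varphi_{\hat I}=1$ does *not* go to zero, which is exactly why the top-degree case is genuinely different and forces us to use the Dirichlet condition rather than plain $L^1$. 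I would handle this by showing $l_{\K_0\omega}=0$ as a consequence of the convergence $\K_\nu\omega\to\K_0\omega$ in $L^1$ together with the uniform link bound, using (\ref{eq_l1_rho}) as the working characterization of Dirichlet forms.
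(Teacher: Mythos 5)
Your proof is correct and follows essentially the same route as the paper: the Cauchy estimate reduces to the $L^1$ mass of $\omega$ on the shrinking collar $h((\nu;t)\times N^\ep)$ (exactly the paper's computation, which simply notes that $h^*\omega=dt\wedge\omega_2$ is $L^1$ because $\omega$ is top-degree, so $\int_\nu^t\int_{N^\ep}|\omega_2|\to 0$), and the homotopy identity of Lemma \ref{lem_proprietes_de_K_nu} then passes to the limit since $\overline{d}\omega=0$ automatically in degree $m$. The only refinements worth noting are that $\pi_\nu^*\omega$ is identically zero for every fixed $\nu$ --- not merely tending to zero because of ``shrinking measure'' --- since $\pi_\nu$ factors through the $(m-1)$-dimensional $N^\nu$ and $\omega$ has degree $m$, and that the Dirichlet/$L^1$ Stokes considerations in your final paragraph are not part of this proposition's statement but of the one that follows it.
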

\begin{proof}
Let  $\omega \in \overline{\Omega}^m _{(1)} (M^\ep)$. Let $h$ be the homeomorphism used to define $\K_\nu$ (see  section \ref{sect_hom_hop}).  As $\omega$ is an $m$-form, $h^*\omega$ is $L^1$. Clearly we have:
$$ \lim_{t, \nu \to 0} \int_{M^\ep} |\K_t \omega -\K_\nu \omega| =\lim_{t, \nu \to 0, t\leq \nu} \int_t ^\nu  \int_{N^\ep} | \omega _2|=0,$$ 
since, as observed, $h^* \omega$ is $L^1$ on  $h^{-1} (M^\ep)$. 

As $\omega$  is an $m$-form, it is identically zero in restriction to $N^\nu$ since this is an $(m-1)$-dimensional manifold. Consequently
 $\pi_\nu^* \omega$ is zero and, as $\overline{d}\omega=0$,
by (\ref{eq_K_nu_homot_operator}) we have:
$$\overline{d}\K_\nu = Id_{ \Omega^m _{(1)} (M^\ep)}. $$
Passing to the limit we get that $\K_0\omega$ is weakly differentiable and that:
$$\overline{d}\K_0\omega =\omega, $$ as required.  
\end{proof}

\begin{pro}\label{pro_Komega_satisfait_l1}
Let $\omega \in \overline{\Omega}_{(1)}^j(M^\ep)$ satisfying the $L^1$
Stokes' property in $X^\ep$.\begin{enumerate} \item[(i)] If $0< j<m$ and $\omega$ has compact support in $X^\ep$
then $\K\omega$  satisfies the $L^1$ Stokes' property in $X^\ep$. \item[(ii)] If $j=m$, then $\K_0 \omega$ satisfies the
$L^1$ Stokes' property in $X^\ep$.\end{enumerate}
\end{pro}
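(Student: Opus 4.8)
The plan is to show that the two operators $\K$ and $\K_0$ preserve the $L^1$ Stokes' property in $X^\ep$ by reducing everything to the defining identity $\overline{d}\K - \K\overline{d} = Id$ (resp.\ $\overline{d}\K_0 = Id$) together with the vanishing estimate (\ref{eq_int_komega_link}) on links, which controls the boundary term $l_{\K\omega}$. Recall from (\ref{eq_l1_rho}) that $\omega$ satisfying the $L^1$ Stokes' property in $X^\ep$ means precisely $|\omega|_{1,\delta}=0$, where the $\delta$-seminorm is the operator norm of the boundary functional $l_\omega(\alpha):=\lim_{t\to 0}\int_{N^t}\omega\wedge\alpha$. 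So the whole problem is: given that $l_\omega$ and $l_{\overline{d}\omega}$ vanish, prove that $l_{\K\omega}$ and $l_{\overline{d}\K\omega}$ vanish (and likewise for $\K_0$).

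\textbf{Case (i), $0<j<m$, $\omega$ with compact support in $X^\ep$.} First I would handle $l_{\overline{d}\K\omega}$: since $\omega$ is closed-with-compact-support situations aside, in general $\overline{d}\K\omega = \K\overline{d}\omega + \omega$ by (\ref{eq_K_homot_operator})-type reasoning; more precisely $\overline{d}\K\omega = \omega + \K\overline{d}\omega$, so $l_{\overline{d}\K\omega}(\alpha) = l_\omega(\alpha) + l_{\K(\overline{d}\omega)}(\alpha)$. The first term vanishes by hypothesis on $\omega$. For the second term, note $\overline{d}\omega$ is an $L^1$ $(j+1)$-form with compact support in $X^\ep$ satisfying $L^1$ Stokes' in $X^\ep$ (its $\delta$-seminorm is the $\delta$-seminorm of $\overline{d}(\overline{d}\omega)=0$, which is $0$, and $|\overline{d}\omega|_{1,\delta}=0$ by hypothesis) — wait, more directly: $l_{\overline{d}\omega}=0$ is exactly part of the assumption that $\omega$ has the Stokes' property in the Dirichlet sense, but here we only assumed $\omega$ itself has the property. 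I would instead argue as follows: apply (\ref{eq_int_komega_link}) of Proposition \ref{pro_proprietes_de_K} directly to $\K\omega$ and to $\K(\overline{d}\omega)$: since $j<m$ (and $j+1\le m$), we get $\lim_{t\to 0}\int_{N^t}|\K\omega| = 0$ and similarly $\lim_{t\to 0}\int_{N^t}|\K\overline{d}\omega|=0$ provided $j+1<m$; the borderline case $j+1=m$ needs a separate look, but when $j=m-1$ the relevant $(m-j-1)$-forms $\alpha$ are $0$-forms, i.e.\ functions, which are bounded, so $\int_{N^t}|\K\omega\wedge\alpha|\le \|\alpha\|_\infty\int_{N^t}|\K\omega|\to 0$ still works because we only need the estimate on $\K\omega$ itself (which holds for all $j<m$). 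Hence for \emph{any} $\alpha\in\ws_{\infty,X^\ep}^{m-j-1}(M^\ep)$, $|l_{\K\omega}(\alpha)| \le \|\alpha\|_\infty \lim_{t\to 0}\int_{N^t}|\K\omega| = 0$, so $|\K\omega|_{1,\delta}=0$. The same estimate applied to $\overline{d}\K\omega = \omega + \K\overline{d}\omega$ gives $l_{\overline{d}\K\omega} = l_\omega + l_{\K\overline{d}\omega}$, and $l_\omega = 0$ by hypothesis while $l_{\K\overline{d}\omega}=0$ by the same link-integral estimate. Therefore $\K\omega \in \ws_{(1),X^\ep}^{j-1}(M^\ep;\delta M^\ep)$.

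\textbf{Case (ii), $j=m$.} Here $\omega$ is an $m$-form, so $\alpha$ ranges over $(-1)$-forms — there is nothing to check for $\K_0\omega$ being Dirichlet in the naive sense, but one must be careful: the relevant pairing for an $(m-1)$-form $\K_0\omega$ is against $(m-(m-1)-1)=0$-forms, i.e.\ functions. So I need $l_{\K_0\omega}(f)=\lim_{t\to 0}\int_{N^t}\K_0\omega\cdot f = 0$ for every locally bounded $f$ with locally bounded $df$. The key point is that $\K_0\omega = \lim_{\nu\to 0}\K_\nu\omega$ in $L^1$ norm, and from Remark \ref{rem_K} (equation (\ref{eq_calcul_L1norm_KIII})) we have $\int_{N^t}|\K_\nu\omega| \le C\int_{h((t;\ep)\times N^\ep)}|\omega|$ uniformly; since $\omega$ is an $m$-form it is $L^1$, so $\int_{h((t;\ep)\times N^\ep)}|\omega|\to 0$ as $t\to 0$ is \emph{not} automatic — but actually here I should use that $h^*\omega$ is $L^1$ on $(0;\ep)\times N^\ep$, so $\int_{h((t;\ep)\times N^\ep)}|\omega| = \int_{(t;\ep)\times N^\ep}|h^*\omega|\,J_h^{-1}\cdot J_h$ — more carefully, $\int_{N^t}|\K_0\omega|\le C\int_{(t;\ep)\times N^\ep}|\omega_2|$ which tends to $0$ as $t\to 0$ by absolute continuity of the integral of the $L^1$ function $|\omega_2|$. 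Hence $|l_{\K_0\omega}(f)| \le \|f\|_\infty\lim_{t\to 0}\int_{N^t}|\K_0\omega| = 0$. Finally $\overline{d}\K_0\omega = \omega$ (proved in the preceding proposition), and $l_\omega = 0$ by hypothesis, so $\K_0\omega$ and its derivative both satisfy $L^1$ Stokes' in $X^\ep$.

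\textbf{Main obstacle.} I expect the main subtlety to be the borderline degree issue and the interaction between "compact support in $X^\ep$" and the boundary functional near $N^\ep = \partial X^\ep$ vs.\ near $\delta M^\ep$: the $L^1$ Stokes' property "in $X^\ep$" involves both the genuine singular part $\delta M^\ep$ and the artificial sphere boundary $S^{n-1}(x_0;\ep)$, and one must make sure the homotopy $\K$ (which is built to respect the conical direction $t = d(x_0;\cdot)$ and kills forms near $t=\ep$ when they have compact support in $X^\ep$) does not spoil the boundary behaviour on $\{t=\ep\}$. The compact-support hypothesis in (i) is exactly what makes $\pi_\nu^*\omega = 0$ for $\nu$ near $\ep$ and hence $\K$ well-defined with the clean identity $\overline{d}\K - \K\overline{d} = Id$; I would make this explicit. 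The verification that $l_{\K\omega}$ vanishes then rests entirely on the link-integral vanishing (\ref{eq_int_komega_link}) and Remark \ref{rem_K}, which is where all the real work was already done in Proposition \ref{pro_proprietes_de_K} — so the present proof is mostly bookkeeping, with care needed only in tracking degrees and support conditions.
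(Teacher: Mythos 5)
There is a genuine gap in your argument for both (i) and (ii): you compute the boundary functional $l_{\K\omega}$ solely from the shrinking links $N^t$, writing $|l_{\K\omega}(\alpha)|\leq \|\alpha\|_\infty\lim_{t\to 0}\int_{N^t}|\K\omega|$. But the $L^1$ Stokes' property in $X^\ep$ is a statement about the \emph{entire} singular boundary $\delta M^\ep$, and the level sets $\{\rho=s\}$ of an exhaustion function are not the links: when the link $N^\ep$ is itself singular ($\delta N^\ep\neq\emptyset$, i.e.\ $x_0$ is not an isolated singularity of $cl(M)$), $\delta M^\ep$ contains the cone over $\delta N^\ep$, and $\{\rho=s\}$ has a second, ``lateral'' piece hugging that cone. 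The paper makes this explicit with the two-parameter exhaustion $M_{\mu,\nu}$, whose boundary splits into the link piece $N^\nu\cap\{\rho_2\geq\mu\}$ --- which is the only piece your estimate via (\ref{eq_int_komega_link}) controls --- and the lateral piece $W_{\mu,\nu}$ over $\{\rho_2=\mu\}$. The heart of the paper's proof of (i) is precisely the lateral term: one shows by Fubini that $\int_{W_{\mu,\nu}}\K\omega\wedge\alpha=\int_{\partial M_{\mu,\nu}}\omega\wedge h^{-1*}\K'_\nu\alpha$ for an adjoint-type operator $\K'_\nu$, checks that $h^{-1*}\K'_\nu\alpha$ is $L^\infty$ (this uses that $h$ is a quasi-isometry away from $\{t=0\}$), and only then invokes the hypothesis that $\omega$ itself satisfies the $L^1$ Stokes' property to conclude that this integral tends to $0$ as $\mu\to 0$. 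Your proof never uses that hypothesis for the main term, which is the tell-tale sign that something is missing: as written, your argument would prove the conclusion for an arbitrary $L^1$ form with compact support, making the hypothesis redundant. What you wrote is essentially correct only in the special case $\delta N^\ep=\emptyset$ (isolated singularity), where the lateral boundary is empty.

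The same omission affects your case (ii): there too one must dispose of the lateral contribution, which the paper does by the separate (and much easier) observation that $\K_0\omega$ is an $(m-1)$-form with no $dt$-component, hence restricts to zero on $W_{\mu,\nu}$ (whose tangent space contains the radial direction). Two smaller points: the proposition only asks that $\K\omega$ satisfy the property, so your discussion of $l_{\db\K\omega}$ is not needed here (that issue is dealt with in Proposition \ref{pro_poinc_lemma_dirichlet}, where $\omega$ is closed and $\db\K\omega=\omega$); and in (ii) the correct bound on the link term is $\int_{N^t}|\K_0\omega|\leq C\int_{(0;t)\times N^\ep}|h^*\omega|$ --- the integral over $(0;t)$, which does tend to $0$ by absolute continuity --- not over $(t;\ep)$ as you wrote, which tends to the full (nonzero) $L^1$ norm.
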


\begin{proof} 
Let $\omega \in  \overline{\Omega}_{(1),X^\ep}^j(M^\ep)$ be a form satisfying the
$L^1$ Stokes' property. We have to check that $|\K
\omega|_{1,\delta}=0$ (see (\ref{eq_l1_rho})).

Consider a $C^2$ nonnegative
function $\rho_1 (x):N^\ep \to \R$ zero on $\delta N^\ep$ and positive on
$N^\ep$. Set $\rho_2=\rho_1 \circ h$ and  denote by $\rho$ the
Euclidian distance to $x_0$. For $\mu$ and $\nu$ positive real
numbers, let
$$M_{\mu,\nu}:= \{ x \in M^\ep: \rho_2(x)\geq \mu,\; \rho(x) \geq
\nu \}.$$ Then $M_{\mu,\nu}$ is a manifold with corners whose
boundary is the union of $\{x\in N^\nu: \rho_2(x)\geq \mu\} $ with
$$W_{\mu,\nu}=\{ x \in M^\ep : \rho_2(x)=\mu, \; \rho(x)\geq \nu \}.$$
Define $Z_{\mu,\nu}:=\partial W_{\mu\nu}$. Denote by $M_{\mu,\nu}'$, $W'_{\mu,\nu}$ and $Z'_{\mu,\nu}$ the respective  images by $h^{-1}$ of $M_{\mu,\nu}$, $W_{\mu,\nu}$ and $Z_{\mu,\nu}$. For the convenience of the reader, we gather all these notations on a picture:

\begin{figure}[ht]
\begin{pspicture}(0,-2)(5,5)

\psline[linewidth=1pt](-4.5,0)(-4.5,4)\psline[linewidth=1pt](.5,4)(.5,0)
\psline[linewidth=.5pt](-4,.8)(-4,4.3)\psline[linewidth=.5pt](0,.8)(0,4.3)

\psarc[linewidth=.5pt](-2,0){4.75}{58}{122}
\rput(0,-4){\psarc(-2,0){4.75}{58}{122}}
\rput(0,-3.5){\psarc[linewidth=.5pt](-2,0){4.75}{65}{115}}
\psdots[dotscale=1](-4,.84)(0,.84)(5.77,1.26)(6.24,1.26)

\psline[linewidth=.4pt,
arrowsize=1pt 2,
arrowlength=4,
arrowinset=0.2]
{->}(-2.45,1.85)(-3.8,1.05)
\psline[linewidth=.4pt,
arrowsize=1pt 2,
arrowlength=4,
arrowinset=0.2]
{->}(-1.5,1.85)(-0.15, 1.05)
\psline[linewidth=.4pt,
arrowsize=1pt 2,
arrowlength=4,
arrowinset=0.2]
{->}(-2.4,3.55)(-3.9, 3.3)
\psline[linewidth=.4pt,
arrowsize=1pt 2,
arrowlength=4,
arrowinset=0.2]
{->}(-1.4,3.55)(-0.15, 3.3)
\psline[linewidth=.4pt,
arrowsize=1pt 2,
arrowlength=2,
arrowinset=0.2]
{->}(-3.9,4.5)(-3.7, 4)


\rput(-2,-.5){$N^\ep \times [0;\ep)$}
\rput(-1.9,3.5){$W'_{\mu,\nu}$}
\rput(-4,4.8){$M'_{\mu,\nu}$}
\rput(-2,2){$Z'_{\mu,\nu}$}
\psline[linewidth=1pt]{->}(1,2)(3.5,2)\rput(2,2.5){$h$}

\psset{xunit=2cm}
\psset{yunit=4cm}
\rput(3,0){
\parametricplot[linewidth=1.2pt,plotstyle=curve]
 {-1}{1}{t t mul t mul 1.8 mul t t mul }}
\rput(3,0){\parametricplot[linewidth=.5pt,plotstyle=curve]
 {-.915}{-.4}{t t mul t mul 1.8 mul t t mul .15 add}}
\rput(3,0){\parametricplot[linewidth=.5pt,plotstyle=curve]
 {.915}{.4}{t t mul t mul 1.8 mul t t mul .15 add}}

\rput(3,0){
\parametricplot[linewidth=0.5pt,plotstyle=curve]
 {45}{135}{t cos 2.4 mul   t sin 1.2 mul }}
\rput(3,0){
\parametricplot[linewidth=0.5pt,plotstyle=curve]
 {80}{100}{t cos .65 mul   t sin .32 mul }}

\psline[linewidth=.7pt,
arrowsize=1pt 1,
arrowlength=3,
arrowinset=0.3]
{->}(2.92,.45)(2.89,.35)
\psline[linewidth=.7pt,
arrowsize=1pt 1,
arrowlength=3,
arrowinset=0.3]
{->}(3.07,.45)(3.1,.35)
\psline[linewidth=.2pt,
arrowsize=1pt 2,
arrowlength=4,
arrowinset=0.2]
{->}(2.8,.8)(2.4, .7)
\psline[linewidth=.2pt,
arrowsize=1pt 2,
arrowlength=4,
arrowinset=0.2]
{->}(3.2,.8)(3.6, .7)
\psline[linewidth=.4pt,
arrowsize=1pt 2,
arrowlength=2,
arrowinset=0.2]
{->}(2.05,1.15)(2.15, 1)
\psline[linewidth=.7pt,
arrowsize=1pt 2,
arrowlength=2,
arrowinset=0.2]
{->}(3.5,.25)(3.43, .37)
\psline[linewidth=.2pt,
arrowsize=1pt 2,
arrowlength=2,
arrowinset=0.2]
{->}(3.85,1.19)(3.7, 1.16)

\rput(4,1.2){$N^\ep$}
\rput(3.5,.2){$\delta M^\ep$}
\rput(3,.85){$W_{\mu,\nu}$}
\rput(2,1.2){$M_{\mu,\nu}$}
\rput(3,.5){$Z_{\mu,\nu}$}

\end{pspicture}
\caption{ The Lipschitz conic structure of $M^\ep$. Here $Z_{\mu,\nu}$ and  $Z_{\mu,\nu}'$ are reduced to two points. }
\end{figure}
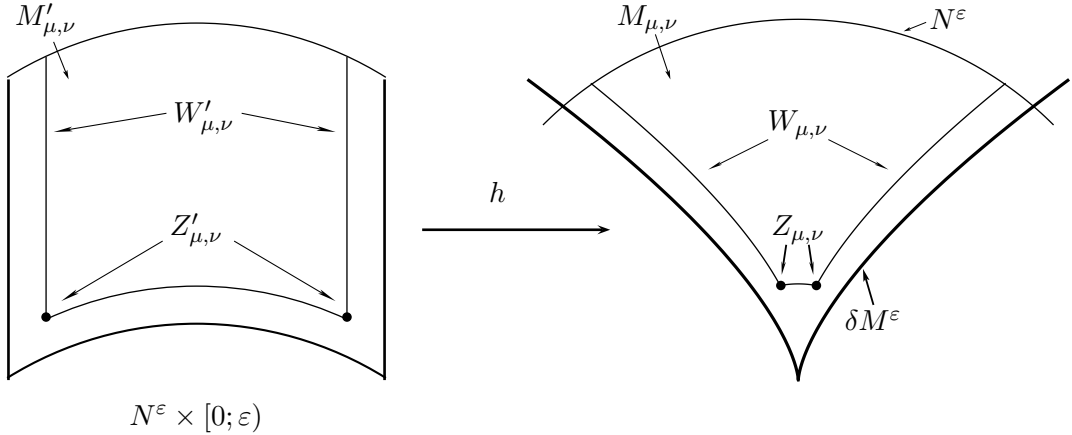

 Observe that by construction (recall that $\rho(h(t;x))=t$) we have  $W'_{\mu,\nu}=Z'_{\mu,\nu} \times [\mu;\ep)$.

By Proposition \ref{pro_proprietes_de_K}, we already know that:$$
\lim_{t \to 0} \int_{N^t} |\K \omega| =0.$$ Therefore it is enough
to check that for every positive real number $\nu$:
\begin{equation}\label{eq_lim_X_K}\lim_{\mu \to
0_+} \int_{ W_{\mu,\nu}} \K\omega \wedge \alpha =0,
\end{equation}
for any $\alpha \in \ws^{m-j-1} _{\infty,X^\ep}(M^\ep)$.

Fix such a form $\alpha$. Write $\beta =h^*\alpha$ for simplicity, and decompose 
$\beta=\beta_1+dt \wedge  \beta_2$ as well as $h^*\omega=\omega_1+dt \wedge  \omega_2$.  Observe that:\begin{equation}\label{eq_beta_1_et_omega_2} \beta_1\wedge \omega_2=0 \qquad \mbox{on } W'_{\mu,\nu},\end{equation}
since this differential $(m-1)$-form does not involve $dt$.
\begin{eqnarray}\label{eq_K_adj_de_K'}
\int_{W_{\mu,\nu}}  \K \omega \wedge \alpha&=& \int_{(t;x) \in W'_{\mu,\ep}} (\int_{s=t} ^\ep \omega_2(s;x) ds ) \wedge \beta(t;x)\nonumber\\
&=&  \int_{x\in Z_{\mu,\ep}'} \int_{t=\nu} ^\ep \int_{s=t} ^\ep \omega_2(x;s)\wedge \beta_2(t;x) \,ds\, dt\nonumber \quad  \mbox{(by (\ref{eq_beta_1_et_omega_2}))}  \\
&=&  \int_{Z_{\mu,\ep}'} \int_{s=\nu} ^\ep \int_{t=\nu} ^s \omega_2(s;x)\wedge \beta_2(t;x) \,dt \,ds \quad \mbox{(by Fubini)}\nonumber\\
&=&  \int_{s=\nu} ^\ep\int_{Z_{\mu,\nu}'}   h^*\omega(x;s) \wedge
\int_{t=\nu} ^s\beta_2(t;x)\,dt.
\end{eqnarray}

 Define a  form $\K'_\nu\alpha$ on $ (0;\ep)\times N^\ep$ by
$$\K'_\nu \alpha(s;x):=\int_{t=\nu} ^s\beta_2(t;x)\,dt$$ if $s\geq \nu$, and set $\K'_\nu\alpha (s;x)$ to be zero if $s\leq \nu$. By $(2)$ of definition \ref{dfn conical}, $h$ induces a quasi-isometry on $[\nu;\ep)\times N^\ep$ (see Remark \ref{rem_conical} $(3)$) and therefore $h^{-1*}\K'_\nu\alpha$ is an $L^\infty$ form. 
 Moreover, in view of (\ref{eq_K_adj_de_K'}), we clearly have:
\begin{equation}\label{eq_K_K'_adj_enonce}\int_{W_{\mu,\nu}} \K \omega \wedge \alpha= \int_{W_{\mu,\nu}'}  h^*\omega \wedge \K'_\nu \alpha.\end{equation}

Now, as by definition $\K'_\nu \alpha$ is zero on $\partial M_{\mu,\nu}'\setminus W_{\mu,\nu}'$, this amounts to:
$$ \int_{ W_{\mu,\nu}} \K  \omega \wedge
 \alpha = \int_{\partial M_{\mu,\nu}} \omega
\wedge h^{-1*}\K' _\nu \alpha'$$ which tends to zero as $\mu$ goes to zero for
$\omega$ satisfies the $L^1$ Stokes property and $\K'_\nu\alpha$ is an
$L^\infty$ form (see Remark \ref{rem_cpct_support}), yielding (\ref{eq_lim_X_K}) and establishing $(i)$.

\medskip

For a proof of $(ii)$,
observe that for any $L^\infty$ $(m-j-1)$-form $\alpha$ with compact support in $X^\ep$:
$$ \lim_{t \to 0}\int_{N^t} |\K_0 \omega \wedge \alpha|\leq  C \lim_{t \to 0}
\int_{ (0;t)\times N^\ep} |h^*\omega |=0  $$ (with $C=\sup |\alpha|$).
%

Therefore, like in the proof of $(i)$, it is enough to show (\ref{eq_lim_X_K}) for $\K_0$. By
definition,  $\K \omega$ is an $(m-1)$-form with no
differential term involving $dt$. Thus $\K_0 \omega$ must be
identically zero on $W_{\mu,\nu}$ and consequently
(\ref{eq_lim_X_K}) is trivial in this case.
\end{proof}

\begin{pro}\label{pro_poinc_lemma_dirichlet} (Poincar\'e Lemma for Dirichlet $L^1$ cohomology)
For $j<m$ and $\ep >0$ small enough   $$H_{(1),X^\ep} ^j (M^\ep;\delta M ^\ep)
\simeq 0\simeq H_{(1)} ^m(M^\ep;\delta M^\ep) .$$
\end{pro}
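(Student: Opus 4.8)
The plan is to assemble the two homotopy operators already constructed, $\K$ and $\K_0$, upgraded to the Dirichlet setting by Proposition \ref{pro_Komega_satisfait_l1}, and to invoke Proposition \ref{pro l1 isom smooth_dir} in order to replace smooth Dirichlet forms by weakly smooth ones (which is the category on which $\K$ and $\K_0$ act). Thus it suffices to show that the weakly smooth Dirichlet complexes $\overline{\Omega}_{(1),X^\ep}^\bullet(M^\ep;\delta M^\ep)$ have vanishing cohomology in degrees $j<m$, and that $\overline{\Omega}_{(1)}^\bullet(M^\ep;\delta M^\ep)$ has vanishing cohomology in degree $m$.

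For the first isomorphism I would fix $0<j<m$ and a $\overline{d}$-closed form $\omega\in\overline{\Omega}_{(1),X^\ep}^j(M^\ep;\delta M^\ep)$. Since $\omega$ has compact support in $X^\ep$ and is closed, Proposition \ref{pro_proprietes_de_K} gives $\overline{d}\K\omega=\omega$, and $\K\omega$ is an $L^1$ form with compact support in $X^\ep$ whose weak exterior derivative, being $\omega$, is again $L^1$. It remains only to verify that $\K\omega$ lies in the Dirichlet subcomplex, i.e. that $\K\omega$ and $\overline{d}\K\omega$ both have the $L^1$ Stokes property in $X^\ep$: for $\overline{d}\K\omega=\omega$ this is the hypothesis, and for $\K\omega$ it is exactly Proposition \ref{pro_Komega_satisfait_l1}(i). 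Hence $[\omega]=0$. The case $j=0$ is immediate, as in Proposition \ref{Poincare_lem_l1_simple}: a $\overline{d}$-closed $0$-form is locally constant, and one with compact support in $X^\ep$ must vanish, since for $\ep$ small the Lipschitz conic structure of Theorem \ref{thm Lipschitz conic structure} identifies $M^\ep\setminus\{x_0\}$ with $(0;\ep)\times N^\ep$, so vanishing near $N^\ep$ forces vanishing on all of $M^\ep$.

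For the second isomorphism I would take a form $\omega\in\overline{\Omega}_{(1)}^m(M^\ep;\delta M^\ep)$; being a top-degree form on the $m$-manifold $M^\ep$ it is automatically $\overline{d}$-closed. By the proposition preceding the present one, $\K_0\omega$ is a well-defined weakly differentiable $L^1$ $(m-1)$-form with $\overline{d}\K_0\omega=\omega$. Once more $\overline{d}\K_0\omega=\omega$ has the $L^1$ Stokes property in $X^\ep$ by hypothesis, while $\K_0\omega$ has it by Proposition \ref{pro_Komega_satisfait_l1}(ii); therefore $\K_0\omega\in\overline{\Omega}_{(1)}^{m-1}(M^\ep;\delta M^\ep)$ is the desired primitive and $[\omega]=0$. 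Finally, transporting both vanishing statements back to the smooth Dirichlet complexes via Proposition \ref{pro l1 isom smooth_dir} concludes the argument.

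All the genuine analytic content — the $L^1$-boundedness of $\K\omega$ and of $\K_0\omega$, the limit $\lim_{t\to 0}\int_{N^t}|\K\omega|=0$, and the preservation of the $L^1$ Stokes property — is already contained in Propositions \ref{pro_proprietes_de_K} and \ref{pro_Komega_satisfait_l1} and in the construction of $\K_0$; so the only real task here is bookkeeping, namely checking at each stage that the primitive produced belongs to the relevant Dirichlet (and, for the first claim, compact-support) subcomplex rather than merely to $\overline{\Omega}_{(1)}^\bullet(M^\ep)$. That membership check is precisely where Proposition \ref{pro_Komega_satisfait_l1} is used, and I do not expect any new estimate to be needed.
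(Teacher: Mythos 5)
Your proof is correct and follows essentially the same route as the paper: apply $\K$ (resp. $\K_0$) to a closed Dirichlet form, use Proposition \ref{pro_Komega_satisfait_l1} to see the primitive stays in the Dirichlet subcomplex, and invoke Proposition \ref{pro l1 isom smooth_dir} to pass between smooth and weakly smooth forms. The extra bookkeeping you supply (compact support of $\K\omega$, the $j=0$ case, automatic closedness in top degree) is consistent with, and slightly more explicit than, the paper's argument.
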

\begin{proof}
The case $j=0$ is clear. Let   $0<j<m$ and let  $\omega \in \Omega_{(1),X^\ep}
^j(M^\ep;\delta M ^\ep)$ be a closed form. Then, by the preceding
proposition $\K \omega$ satisfies the $L^1$ Stokes' property. Furthermore, $\overline{d} \K \omega=\omega$ and, by  Proposition \ref{pro l1 isom smooth_dir}, $\K \omega$ satisfies the $L^1$ Stokes' property. The first isomorphism ensues.

To compute $H_{(1)} ^m(M^\ep;\delta M ^\ep)$, just use $\K_0$ and $(ii)$ of
the preceding proposition exactly in the same way.
\end{proof}

\subsection*{Lefschetz duality for $L^1$ cohomology.} The setting is still the same as in section \ref{sect_pd_dirichlet}.

\begin{thm}\label{thm_Poincare duality_dirichlet}  The pairing
$$H_{(1),X}^{m-j}(M;\delta M)  \otimes H_{\infty,loc} ^{j}(M)\to \R$$
$$(\alpha;\beta) \mapsto \int_{M}\alpha \wedge \beta $$
is  nondegenerate.  \end{thm}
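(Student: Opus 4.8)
The plan is to follow the standard sheaf-theoretic scheme that was already used for Theorem \ref{thm_l1_psi_M_isom}, but now for the \emph{pair} of complexes of presheaves $U\mapsto \overline{\Omega}_{(1),X}^{m-\bullet}(U\cap M;\delta M)$ and $U\mapsto \overline{\Omega}_{\infty,loc}^{\bullet}(U\cap M)$ on $X=cl(M)$, with the pairing given by wedge product and integration. First I would reduce, via Proposition \ref{pro l1 isom smooth_dir} and the definition of $H_{\infty,loc}^\bullet$, to the cohomology of weakly smooth forms, and replace the global $L^1$/$L^\infty$ condition by the sheaf-local one, noting as in the proof of Theorem \ref{thm_l1_psi_M_isom} that $X$ is compact only \emph{if} $M$ is bounded — so here one must keep the "$loc$" decorations and the compact-support decoration $X$ in the Dirichlet complex, and observe that the pairing is well-defined because an $L^1$ form with compact support in $X$ wedged with a locally $L^\infty$ form is integrable, and the $L^1$ Stokes property (Remark \ref{rem_cpct_support}) makes the induced map on cohomology well-defined. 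Then I would set up the bilinear map of spectral sequences / use a Mayer--Vietoris (Čech) argument: softness of the relevant sheaves reduces the global nondegeneracy statement to nondegeneracy of the stalk pairings, i.e.\ to a local Lefschetz--Poincaré duality.

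The local statement to prove is, for each $x_0\in X$ and $\ep>0$ small, that the pairing
\begin{equation*}
H_{(1),X^\ep}^{m-j}(M^\ep;\delta M^\ep)\otimes H_{\infty,loc}^{j}(M^\ep)\to\R,\qquad (\alpha,\beta)\mapsto\int_{M^\ep}\alpha\wedge\beta
\end{equation*}
is nondegenerate. For this I would combine three inputs already available in the excerpt: the Poincaré Lemma for Dirichlet $L^1$ cohomology, Proposition \ref{pro_poinc_lemma_dirichlet}, which kills $H_{(1),X^\ep}^{m-j}(M^\ep;\delta M^\ep)$ for $m-j>0$ and also $H^m_{(1)}(M^\ep;\delta M^\ep)$; the Poincaré Lemma for $L^\infty$ cohomology, Theorem \ref{thm_poincare}, which kills $H_\infty^j(M^\ep)$ for $j>0$; and the local exact sequences (\ref{eq_long_dirichlet_local}) relating $H^\bullet_{(1),X^\ep}(M^\ep;\delta M^\ep)$ to $H^\bullet_{(1)}(M^\ep;\delta M^\ep)$ and to the cohomology of the link $N^\ep$, together with the analogous $L^\infty$ sequence. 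The idea is an induction on $m=\dim M$: the only nontrivial degree locally is the top one $m-j=m$ versus $j=0$ (or, via the exact sequences, it is shifted onto the link $N^\ep$, which has dimension $<m$, where the inductive hypothesis applies, with the base case $m=0$ trivial). In degree $(m,0)$ one checks directly that $H^m_{(1),X^\ep}(M^\ep;\delta M^\ep)$ is one-dimensional, generated by a compactly supported $L^1$ volume form $\omega$ with $\int_{M^\ep}\omega=1$ (this uses $\overline{d}\K_0$, i.e.\ the operator from Proposition \ref{pro_Komega_satisfait_l1}(ii), to see exactness in degrees $<m$ and nontriviality in degree $m$ via the functional $\omega\mapsto\int\omega$), while $H^0_{\infty,loc}(M^\ep)=\R$ is generated by the constant function $1$, and the pairing sends $(\omega,1)\mapsto\int_{M^\ep}\omega=1\neq0$.

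The technical glue is a five-lemma / ladder argument. I would write the long exact sequence (\ref{eq_long_dirichlet_local}) for $H^\bullet_{(1),X^\ep}$, the Mayer--Vietoris-type sequence $\cdots\to H^{j}_{\infty,loc}(M^\ep)\to H^j_\infty(N^\ep)\to H^{j+1}_{\infty,loc,X^\ep}(M^\ep)\to\cdots$ (the $L^\infty$ analogue, with the appropriate compact-support version, which follows from Theorem \ref{thm_poincare} and the conic structure exactly as in section \ref{sect_cohomo_cpct_supp_local}), and the pairing between them; then the inductive hypothesis gives nondegeneracy of the link pairings $H_{(1),X}^{(m-1)-j}(N^\ep;\delta N^\ep)\otimes H_{\infty,loc}^j(N^\ep)\to\R$, and the two Poincaré lemmas make the remaining terms vanish in the right ranges, so nondegeneracy propagates up. I expect the \textbf{main obstacle} to be bookkeeping rather than a single hard idea: matching up the degree shifts and the compact-support versus locally-bounded decorations on both sides so that the ladder actually commutes (up to the sign $(-1)^{j+1}$ coming from Stokes), and verifying that the boundary maps in the two exact sequences are adjoint with respect to the pairing — this last point is where one must invoke the $L^1$ Stokes property of Dirichlet forms (via $l_\omega$ and Definition \ref{dfn_norme_partial}) to integrate by parts on the truncated manifolds $M_{\mu,\nu}$ and pass to the limit, exactly the computation carried out in the proof of Proposition \ref{pro_Komega_satisfait_l1}. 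Once the local duality is established, softness of the sheaves $\la^\bullet$ and of the Dirichlet-$L^1$ sheaves, together with the compactness-free version of the spectral sequence argument from \cite{bredon}, upgrades it to the global nondegenerate pairing asserted in the theorem.
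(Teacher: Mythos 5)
Your proposal follows essentially the same route as the paper: sheafify the pairing, reduce by the spectral sequence argument to local duality, kill all degrees $0<j$ (equivalently $m-j<m$) by the two Poincar\'e lemmas (Theorem \ref{thm_poincare} and Proposition \ref{pro_poinc_lemma_dirichlet}), compute $H^m_{(1),X^\ep}(M^\ep;\delta M^\ep)\simeq\R$ by induction on the dimension via the link exact sequence (\ref{eq_long_dirichlet_local}), and conclude surjectivity in top degree with a compactly supported volume form. This matches the paper's proof in all essential respects.
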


By ``nondegenerate'' we mean that for any $L^1$ differential form
with compact support $\beta$ there is a locally $L^\infty$
differential form $\alpha$ such that $\int_M\alpha \wedge
\beta =1$ and for any closed locally $L^\infty$ form $\alpha$
there is a  form $\beta \in \Omega_{(1),X} ^{m-j}(M;\delta M)$ for which the latter integral is
nonzero as well.

\begin{proof}
We shall apply an argument which is similar to the one  used in
the proof of Theorem \ref{thm_l1_psi_M_isom}. As we may argue by
induction on $m$, we  shall assume that the theorem holds for
 manifolds of dimension $(m-1)$, $m\geq 1$.

Consider the complex of
presheaves on $X$ defined by  $\Omega_{(1),U}^j(U\cap
M;U\cap \delta M)^*$ (where $*$ denotes the algebraic dual vector space), if $U$ is an open subset of $X$, and denote by
$\la_{(1)}^j$ the resulting differential sheaf. Let $\hn^\bullet
(\la_{(1)}^\bullet)$ be the derived sheaves. Similarly,  denote by
$\la^j_{\infty}$ the differential sheaf resulting from the presheaf
$\Omega_{\infty,loc}^j(U\cap M)$.

For every subset $U\subset X$ and $j\leq m$, consider the  mappings
$$\varphi_U ^j :  \Omega_{\infty,loc}^{j}(U\cap M)\to \Omega_{(1),U}^{m-j}(M\cap U;\delta M\cap U)^*,$$
defined by $\varphi_U^j (\alpha):\beta \mapsto \int_{U\cap M} \alpha \wedge
\beta$. It
 follows from the theory of spectral sequences (see for instance \cite{bredon} IV Theorem  2.2) that, if the mapping of complex of differential sheaves induced by
  $\varphi_U ^j$ is a local isomorphism,  then $\varphi_M^j$ induces an isomorphism between the  cohomology groups of the respective global sections
 of  $\la_{(1)}^{m-j}$  and $\la_\infty ^{j}$, as required.

 Thus, we  simply have to make sure that
 the mappings $\varphi_U ^j$'s induce local isomorphisms at any $x_0 \in cl(M)$.  Notice that by
 Theorem \ref{thm_poincare} and Proposition \ref{pro_poinc_lemma_dirichlet}, this is clear for $j>0$.

It remains to deal with the case where $j=0$. 

 As we can
work separately on the connected components of $M^\ep$ we will assume that $M^\ep$ is connected. By Theorem \ref{thm_Poincare duality_dirichlet} we have: $$H^m_{(1)}(M^\ep ;\delta M^\ep) \simeq 0.$$ 
 By induction on the dimension, we know that Lefschetz duality   holds for $N^\ep$.  Since  $N^\ep$ is connected, by Theorem \ref{thm_poincare_ih} we get: $$H^{m-1}_{(1)}(N^\ep;\delta N^\ep) \simeq H_\infty ^{m-1}(N^\ep)\simeq I^t H^{m-1}(N^\ep) \simeq \R,$$
 (see \cite{ih1,ih2} for the local computations of the intersection homology groups). 

 Thanks to the long exact sequence (\ref{eq_long_dirichlet_local}), we deduce that:$$H^m_{(1),X^\ep}(M^\ep ;\delta M^\ep)\simeq H^{m-1}_{(1)}(N^\ep;\delta N^\ep) \simeq \R. $$


 Hence, it is enough to show that $\varphi^m_{M^{\ep}}$ is onto.  As the $L^\infty$ closed  $0$-forms are reduced to the constant
 forms, it suffices  to prove that for $x_0 \in cl(M)$ and $\ep
 >0$ small enough,  
 we can find $\omega \in \Omega_{(1),X^\ep} ^m (M^\ep;\delta M^\ep)$ such that  $\int _{M^\ep} \omega\neq 0$.

 As $M^\ep$ is orientable we can find a volume form on $M^\ep$. We may multiply this form by a bump function to get a form with compact support in $X^\ep$. The integral on $M^\ep$ of this form is then necessarily nonzero.  This shows that $\varphi^m_{M^{\ep}}$ is onto.
\end{proof}

Of course, when $M$ is bounded, $H_{\infty, loc}^j (M)$ (resp.
$H_{(1),X}^{j}(M;\delta M)$) and $H_{\infty} ^j(M)$ (resp.
$H_{(1)}^{j}(M;\delta M)$) coincide so that the latter paring
induces in the case of bounded manifold an isomorphism between
$H_\infty ^j (M)$ and the dual vector space of $H_{(1)} ^j
(M;\delta M)$, establishing Theorem
\ref{thm_intro_poincare_dirichlet}.

\begin{rem}As explained in the introduction, Theorem \ref{thm_intro_poincare_dirichlet} and Generalized Poincar\'e duality  imply the de Rham theorem for Dirichlet $L^1$ cohomology (Corollary \ref{cor_dirichlet_de_rahm_intro}). In this section we assumed that $M$ is orientable. This is necessary to prove Lefschetz duality for $L^1$ cohomology (Theorem \ref{thm_Poincare duality_dirichlet}). Nevertheless, the de Rham theorem for $L^1$ cohomology could be proved directly (independently of Lefschetz duality) and then orientability is unnecessary. 
\end{rem}


\section{On the $L^1$ Stokes' property}\label{sect_l1sp} Let $M\subset \R^n$ be a bounded  orientable submanifold.  The latter theorem raises a natural question:
when do we have the $L^1$ Stokes' property on a subanalytic
manifold ? This amounts to wonder when the Dirichlet $L^1$
forms and the $L^1$ forms coincide not only as cohomology groups,
but also as cochains complexes. The following theorem answers very
explicitly. The $L^1$ Stokes' property holds for  $j$-forms iff
 $\delta M$ is of dimension less than $(m-j-1)$.

In particular, if a subanalytic compact set $X\subset \R^n$ has only isolated singularities, then
the $L^1$ Stokes' property holds for any $L^1$ $j$-form on $X_{reg}$, $j<m-1$. Below we
adopt the convention that $\dim \emptyset =-1$.

\begin{thm} Let $j<m$.
The $L^1$ Stokes' property holds for $j$-forms iff $\dim \delta M
< m-j-1$. In this case,  $L^1$ cohomology is naturally dual to
$L^\infty$ cohomology in dimension $j$, i. e. the pairing:
$$ H_{(1)}^{j}(M) \otimes H_\infty ^{m-j}(M) \to \R$$
$$(\alpha;\beta) \mapsto \int_{M}\alpha \wedge \beta $$
is (well defined and) nondegenerate.
\end{thm}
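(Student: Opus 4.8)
Everything reduces to the equivalence, which I establish for every index $j<m$:
$$(\star)\qquad (\text{the }L^1\text{ Stokes' property holds for }j\text{-forms on }M)\iff \dim\delta M<m-j-1.$$
Granting $(\star)$ for all indices, the duality assertion follows: if $\dim\delta M<m-j-1$ then also $\dim\delta M<m-(j-1)-1$, so the $L^1$ Stokes' property holds for $(j-1)$- and for $j$-forms; moreover if $\omega$ satisfies it then so does $\db\omega$ (on a level set $\{\rho=\ep\}$ of an exhaustion function the top form $\db\omega\wedge\alpha$ restricts to $(-1)^{j+1}\omega\wedge\db\alpha$ because $d$ of the $(m-1)$-form $\omega|\wedge\alpha|$ vanishes there, and one lets $\ep\to0$). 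Hence $\Omega_{(1)}^{i}(M)=\Omega_{(1)}^{i}(M;\delta M)$ for $i=j-1,j$, so $H_{(1)}^j(M)\simeq H_{(1)}^j(M;\delta M)$; since $M$ is bounded, Theorem \ref{thm_Poincare duality_dirichlet} gives a nondegenerate integration pairing $H_{(1)}^j(M;\delta M)\otimes H_\infty^{m-j}(M)\to\R$, and the same Stokes' property shows it descends to $H_{(1)}^j(M)\otimes H_\infty^{m-j}(M)$.

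\textbf{Proof of ``$\Leftarrow$'' in $(\star)$.} Since a $C^\infty$ $L^1$ form is weakly smooth, it suffices to show $|\omega|_{1,\delta}=0$ for every $\omega\in\ws_{(1)}^j(M)$. Covering the compact set $\delta M$ by finitely many balls $B^n(x_0;\ep)$ and using a subordinate partition of unity together with the linearity of $\omega\mapsto l_\omega$ (the piece supported away from $\delta M$ contributes nothing), I reduce to $\omega$ supported in one such ball, hence to a local statement at $x_0\in\delta M$. Apply the Lipschitz conic structure of $X=cl(M)$ at $x_0$ (Theorem \ref{thm Lipschitz conic structure}): $h\colon(0;\ep)\times L(x_0;X)\to X^\ep\setminus\{x_0\}$ identifies $M^\ep$ with $(0;\ep)\times L(x_0;M)$ carrying $dt^2+\sum\varphi_i^2\lambda_i^2$ (a quasi-isometry away from $\{t=0\}$, Remark \ref{rem_conical}), and identifies $\delta M$ near $x_0$ with $(0;\ep)\times L(x_0;\delta M)$, where $\dim L(x_0;\delta M)\le\dim\delta M-1<m-j-2$. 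Pulling $\omega$ and a test form $\alpha\in\ws_\infty^{m-j-1}(M^\ep)$ back by $h$ (licit by Proposition \ref{prop_pullback_weakly smooth forms}) and applying Stokes' formula on $(0;\ep)\times L(x_0;M)$, I express $l_\omega(\alpha)$ as a ``vertex'' term $\lim_{t\to0}\int_{\{t\}\times L(x_0;M)}h^*\omega\wedge h^*\alpha$ plus a ``transverse'' term carried by $(0;\ep)\times L(x_0;\delta M)$. The vertex term vanishes because the $\varphi_i$'s decrease to $0$ and $j<m$ — this is exactly the computation made for the operator $\K$ in Proposition \ref{pro_proprietes_de_K}, equations (\ref{eq_int_komega_link})--(\ref{eq_calcul_L1norm_KIII}). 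The transverse term is the same quantity $|\cdot|_{1,\delta}$ for the $(m-1)$-dimensional manifold $L(x_0;M)$ with ``$\delta$'' equal to $L(x_0;\delta M)$ of dimension $<(m-1)-j-1$, so it vanishes by the inductive hypothesis on $\dim M$. Thus $l_\omega=0$; applying the same to $\db\omega$ gives $\omega\in\ws_{(1)}^j(M;\delta M)$.

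\textbf{Proof of ``$\Rightarrow$'' in $(\star)$.} By contraposition, suppose $k:=\dim\delta M\ge m-j-1$; I produce $\omega\in\Omega_{(1)}^j(M)$ with $l_\omega\ne0$. Pick $x_0$ in a top-dimensional stratum of a Whitney stratification of $X$ with $\dim_{x_0}\delta M=k$; near $x_0$ the pair $(M,\delta M)$ is subanalytically (indeed bi-Lipschitz) a product $S\times\big(C(F)\setminus\{0\}\big)$ with $S$ a $k$-ball in $\delta M$ and $F$ the transverse link, $\dim F=m-k-1\le j$, the metric degenerating along the cone only in the $F$-directions. Write $j=k'+(m-k-1)$ with $0\le k'\le k$, choose closed coordinate forms $\mu$ (degree $k'$) and $\nu$ (degree $k-k'$) on $S$ with $\mu\wedge\nu=\mathrm{vol}_S$, a bump function $\chi$, and set
$$\omega:=\chi\cdot\pi_F^{*}\mathrm{vol}_F\wedge\pi_S^{*}\mu,\qquad \alpha:=\pi_S^{*}\nu.$$
The degeneracy of the metric in the $F$-directions is exactly compensated in the pointwise norms, so $\omega$ is $L^1$ with $\db\omega$ $L^1$ (the only contribution to $\db\omega$ coming from $\db\chi$; after a routine mollification $\omega$ may be taken $C^\infty$), $\alpha$ is $L^\infty$ and closed, and $\int_{\rho=\ep}\omega\wedge\alpha$ equals, up to sign and to leading order as $\ep\to0$, the nonzero constant $\mathrm{vol}(F)\int_S\chi\,\mu\wedge\nu$. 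Hence $l_\omega(\alpha)\ne0$ and the $L^1$ Stokes' property fails for $j$-forms.

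\textbf{Where the difficulty lies.} The delicate direction is ``$\Leftarrow$'', precisely the geometric bookkeeping separating $l_\omega$ into its vertex and transverse-to-$\delta M$ parts: Theorem \ref{thm Lipschitz conic structure} is built around the distance to the \emph{point} $x_0$, whereas $|\omega|_{1,\delta}$ is governed by the distance to $\delta M$, and the degenerating weights $\varphi_i$ — whose rates are uncontrolled — relate the two functions in a nontrivial way. Making this rigorous requires the estimates behind Proposition \ref{pro_proprietes_de_K} together with an induction on $\dim M$ along the links, in the spirit of the proof of Theorem \ref{thm_l1_psi_M_isom}.
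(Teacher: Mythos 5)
Your overall architecture matches the paper's: localize by a partition of unity, use the Lipschitz conic structure to split the boundary contribution into a ``vertex'' part and a ``lateral'' part handled by induction, and, for the converse, take a product decomposition near a regular point of $\delta M$ and wedge a transversally failing form with coordinate forms along $\delta M$, concluding by Fubini. But there is a genuine gap at the heart of the ``$\Leftarrow$'' direction, namely your treatment of the vertex term. You assert that $\lim_{t\to 0}\int_{N^t}\omega\wedge\alpha=0$ ``because the $\varphi_i$'s decrease to $0$ and $j<m$ --- this is exactly the computation made for the operator $\K$''. It is not: Proposition \ref{pro_proprietes_de_K} controls $\int_{N^t}|\K\omega|$, where $\K\omega$ is obtained by integrating the $dt$-component of $h^*\omega$ in the radial direction, and the decay comes from the fact that this primitive is dominated by the $L^1$ norm of $\omega$ over a shrinking shell. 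For $\omega$ itself there is no such control: an $L^1$ form with $L^1$ differential need not have finite, let alone decaying, integral over the slices $N^t$. The paper's decisive move --- writing $\omega=\overline{d}\K\omega-\K\overline{d}\omega$ via the homotopy identity, applying the $\K$-estimate to the term $\K\overline{d}\omega$, and integrating the term $\overline{d}\K\omega\wedge\alpha$ by parts \emph{on the slice} $N^\nu$ (which itself invokes the inductive Stokes' property on the link, and is why the paper strengthens the induction to statements about products $M\times\dbb$ with a cube, so as to absorb the boundary terms over $N^\nu\times\partial\dbb$) --- is absent from your proposal. Without it the vertex term is not dispatched. Note also that the paper's induction runs on $k=\dim\delta M$, not on $\dim M$; your inductive scheme would still have to be reorganized along these lines to make the lateral term (a product $Z_\mu\times[\nu;\ep]$) fit the inductive statement.

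A secondary problem lies in your converse. You build $\omega=\chi\cdot\pi_F^*\mathrm{vol}_F\wedge\pi_S^*\mu$ and claim the metric degeneracy ``is exactly compensated in the pointwise norms''. This presumes that the transverse slice is bi-Lipschitz a genuine metric cone over $F$, which is precisely what subanalytic singularities fail to be in general (the whole point of the weights $\varphi_i$ in Definition \ref{dfn conical}); the radial pullback $\pi_F^*\mathrm{vol}_F$ is not canonically defined, and its integrability and the non-vanishing of $l_\omega(\alpha)$ are not routine. The paper sidesteps this entirely: it obtains a transverse $(m-1-k)$-form failing the Stokes' property \emph{cohomologically}, from the discrepancy between $H^{m-k}_{(1),X'}(M')\simeq 0$ and $H^{m-k}_{(1),X'}(M';\delta M\cap X')\neq 0$ on the slice $M'$ with isolated singularity (via Corollary \ref{cor_dirichlet_de_rahm_intro}), and only then wedges with $dx_1\wedge\dots\wedge dx_{j-m+k+1}$ and applies Fubini as you do. You should replace your explicit construction by this cohomological existence argument, or else justify the $L^1$ and boundary-integral estimates for your candidate form using the full strength of Theorem \ref{thm Lipschitz conic structure}.
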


\begin{proof}
We first focus on the if part. Write $X:=cl(M)$.

 As pointed out in section
\ref{sect_Dirichlet_L^1-cohomology_groups.} (see
(\ref{eq_l1_rho})), it is enough to show that for any $\omega \in  \overline{\Omega}_{(1)}^j (M)$ we have
$|\omega|_{1,\delta}=0$. We shall prove by induction the following statements.

\bigskip

{\bf$(\textrm{A}_k)$}  Let $a<b$ be real numbers and let $k$ and $l$ be  integers.  Let $M$ be a bounded manifold with $\dim \delta M=k$. Set $\mathbb{D}:=[a;b]^l$. Write $\overline{\Omega}_{(1),X\times \dbb}^j (M \times \dbb)$ for the weakly smooth forms $\omega$ on $M \times \dbb$, with compact support in $X\times \dbb$, such that  $\omega$ and $\db \omega$ are continuous near almost every point of $M \times \pa \dbb$ and $L^1$ on $M \times  \dbb $ and on  $M \times \pa \dbb $.

 Let $\theta:X\to \R$ be a $C^2$ nonegative function with $\theta^{-1}(0)=\delta M$.
For $\omega \in \overline{\Omega}_{(1),X\times \dbb}^j (M \times \dbb)$ and 
$\alpha \in \overline{\Omega}_{\infty}^{m-j+l-1} (M \times \dbb)$ we have:
$$\lim_{\nu \to 0} \int_{\{\theta=\nu\} \times \dbb} \omega \wedge \alpha =  0.$$

\medskip
The 'if part' of the theorem follows from the case where $l$ is zero.  The product by $\dbb$ will be useful to perform the induction step. 
Note that the case where $\dim \delta M=-1$ is obvious since in this case $\{\theta=\nu\}$ is empty for $\nu$ small enough.

Fix $\omega$ and $\alpha$ like in {\bf$(\textrm{A}_k)$}, $k\geq 0$.  It suffices to prove {\bf$(\textrm{A}_k)$} 
 for the forms $\varphi_i\omega$, if $\varphi_i$ is a
partition of unity. This means that we can work locally and assume
that the support of  $\omega$ in $X$  is included in a little ball $B^n(x_0;\ep)\times \dbb$ with
$\ep>0$ and $x_0\in X$.

We adopt the same notations as in the proof of Proposition
\ref{pro_Komega_satisfait_l1} that we recall (see fig. $1$). Consider a $C^2$ nonnegative function $\rho_1 (x):N^\ep\to
\R$ zero on $\delta N^\ep$ and positive on $N^\ep$. Set $\rho_2=\rho_1
\circ h^{-1}$ (recall that $h$ is the local mapping provided by Theorem \ref{thm Lipschitz conic structure}) and denote by $\rho$ the Euclidian distance to $x_0$. For
$\mu$ and $\nu$ positive real numbers, let
$$M_{\mu,\nu}:= \{ x \in M^\ep: \rho_2(x)\geq \mu,\; \rho(x) \geq
\nu \}.$$ Then $M_{\mu,\nu}$ is a manifold with corners (for $\mu$ and $\nu$ generic) whose
boundary is the union of the set $\{x\in N^\nu: \rho_2(x)\geq \mu\} $ with the set
$$W_{\mu,\nu}=\{ x \in M^\ep : \rho_2(x)=\mu, \; \rho(x)\geq \nu \}.$$
Denote by $Z_{\mu}$ the set $\{ x\in N^\ep:    \rho_2(x)=\mu\}$. 

We shall  show that \begin{equation}\label{eq_lim_X} \lim_{\nu\to
0}\,\lim_{\mu \to 0} \int_{\partial M_{\mu,\nu}\times \dbb} \omega \wedge
\alpha =0.
\end{equation}

 Extend trivially  the mapping $h$ to a mapping  $h':N^\ep \times [0;\ep] \times \dbb \to M^\ep \times \dbb$ and let  $\omega':=h'^*(\omega)$ and $ \alpha':=h'^*(\alpha)$. Note  that as $h^{-1}( W_{\mu,\nu})=Z_\mu \times [\nu;\ep]$:
$$\lim_{\mu \to 0} \int_{W_{\mu,\nu}\times \dbb } \omega\wedge \alpha =  \lim_{\mu \to 0} \int_{Z_{\mu}\times [\nu;\ep]\times \dbb  } \omega'\wedge \alpha',$$
which tends to zero  thanks to the induction hypothesis (since $\dim \delta N^\ep<k$). It thus remains to show that:
\begin{equation}\label{eq_lim_Y} \lim_{\nu \to 0} \int_{N^\nu\times \dbb}  \omega \wedge
\alpha = 0. \end{equation}

 We shall again make use of the
homotopy operator $\K$. We extend $\K$ to a operator on $\overline{\Omega}_{(1),X\times \dbb}^{j+l} (M^\ep \times \dbb)$, considering the extra variables in $\dbb$  as parameters (if a form $\omega(x;t)$ on $M^\ep \times \dbb$ is $L^1$ then the form $\omega_t(x):=\omega (x;t)$ is $L^1$ on $M^\ep$ for almost every $t\in\dbb$).  For almost every $t$, $\K \omega_t$ is a $L^1$ form of $M^\ep$. Moreover, by remark \ref{rem_K}, the forms $\beta(x;t):=\K \omega_t(x)$ and $\beta'(x;t):= \K d\omega_t (x)$ are $L^1$ forms on $M^\ep \times \dbb$. Then (\ref{eq_K_homot_operator}) continue to hold for $L^1$ forms with compact support in $X^\ep \times \dbb$.

This   identity entails that (\ref{eq_lim_Y}) splits  into:
\begin{equation}\label{eq_lim_YK}\lim_{\nu \to 0}  \int_{N^\nu\times \dbb} \K \db\omega_t \wedge \alpha=0.\end{equation}
and
\begin{equation}\label{eq_lim_YD}\lim_{\nu \to 0}\int_{N^\nu\times \dbb}\overline{d}\K \omega_t \wedge \alpha=0.\end{equation}
In virtue of {\bf$(\textrm{A}_{k-1})$}  the $L^1$ Stokes' property holds on $N^\nu\times \dbb$ and, integrating by parts, the latter equation may be rewritten as:
\begin{equation}\label{eq_lim_YDbis}\lim_{\nu \to 0}\;[ \int_{N^\nu\times \dbb} \K \omega \wedge \overline{d}\alpha+ \int_{N^\nu\times \pa \dbb} \K \omega \wedge \alpha]\;=0.\end{equation}

Observe that  (\ref{eq_int_komega_link}) holds for $\omega_t$ and  $\db\omega_t$ for almost every $t$, i. e. that we have for almost every $t$ in $\dbb$: $$\lim_{\nu \to 0} \int_{N^\nu}|\K \omega_t|=\lim_{\nu \to 0} \int_{N^\nu}|\K\db\omega_t|=0.$$
Therefore,  as $\alpha$ and $\db \alpha$ are $L^\infty$, (\ref{eq_lim_YK}) and (\ref{eq_lim_YD}) (via (\ref{eq_lim_YDbis})) both come down from the Lebesgue dominated convergence theorem.

For the statement on Poincar\'e duality, observe now that the condition $\dim \delta M <m-j-1$ ensures that  $(j-1)$ and $j$ forms satisfy the $L^1$ Stokes' property.
Hence,  $$H_{(1)} ^j (M) \simeq H_{(1)} ^j(M;\delta M)$$ and the statement follows from Theorem \ref{thm_Poincare duality_dirichlet}.

\bigskip

It remains to prove that if the $L^1$ Stokes' property holds for all $j$-forms then $\dim \delta M < m-j-1$. Fix $j<m$. We shall indeed establish the contraposition. 

 Let $k:=\dim \delta M$. Assume $k \geq m-j-1$ and take a regular point $x_0$ of $\delta M$.

Up to a local diffeomorphism we may identify a neighborhood $W$ of $x_0$ in $\delta M$ with an open subset of $\R^{k}$ (that we will still denote $W$). Also, thanks to subanalytic  bi-Lipschitz triviality \cite{vlt}, there is a subanalytic by-Lipschitz map $H$ sending  a contractible neighborhood $U$ of $x_0$ in $X$ onto a product   $W \times X'$, with $X'$ having only an isolated singularity. We can also assume that $H(M\cap U)$ is a product $W \times M'$.   
   
By Proposition \ref{prop_pullback_weakly smooth forms}, subanalytic bi-Lipschitz maps induce a one-to-one correpondence between  weakly smooth forms and consequently,  $M$ satifies the  $L^1$ Stokes' property  if and only if so does $W \times M'$.   Therefore it is enough to show the result on  $ W\times M' $.

Observe that $$H_{(1),X'}^{m-k} (M') \simeq  0,$$
while  $H_{(1),X'}^{m-k} (M';\delta M \cap X')$ is nonzero (by Corollary  \ref{cor_dirichlet_de_rahm_intro}). Consequently there must be a form $\omega \in \Omega_{(1),X'} ^{(m-1-k)} (M')$ which does not satisify the  $L^1$ Stokes' property.  Define an $L^1$ $j$-form on $M$ by:    $$\alpha:=\omega \wedge d x_1 \wedge \dots \wedge d x _{j-m+k+1},$$ 
 where $d x_1, \dots ,d x_k$ is the canonical basis of $1$-forms on $W$ ($(j-m+k+1)$ is nonnegative by assumption). We claim that $\alpha$ does not satisfy the  $L^1$ Stokes' property in $W \times X'$.   We will exhibit a form $\beta \in \Omega_{\infty, W \times X'} ^{m-j-1} (W \times M')$ such that $l_\alpha (\beta)\neq 0$.

For this purpose, recall that since the  $L^1$ Stokes'  property fails for $\omega$ on $M'$, there exists a form $\theta \in \ws_{\infty , X'} ^{0}(M')$ for which $l_\omega (\theta) \neq 0.$  Define a form on $W\times M'$ by:
$$\theta ':=\theta \,d x _{j-m+k+2} \wedge \dots \wedge d x _{m}.$$  
As $\theta '$ does not have compact support in $W \times X'$,  we shall multiply it by a bump function. Let $\psi: W \to [0;1] $ be a smooth nonegative compactly supported function which takes value $1$ at $x_0$ and set $\beta:=\psi \theta '$. By Fubini   $$l_\omega (\beta) =l_\omega (\theta)\int_{W} \psi(y)dy \neq 0,$$ as required.  
\end{proof}

\begin{rem}
The argument used in the above proof was essentially local. Therefore, if we replace $L^\infty$ by $L^\infty_{loc}$ and $L^1$ by $L^1$ with compact support in $X$ the theorem goes over unbounded manifolds as well.
\end{rem}

\section{An example.}
We end this paper by an example on which we discuss all the
results of this paper. Let $X$ be the suspension of the torus.





 This is the set
constituted by two cones over a torus that are attached along this
torus. It is the most basic example on which Poincar\'e duality
fails for singular homology but holds for intersection homology \cite{ih1}.  Let $x_0$ and $x_1$ be the two
isolated singular points.

This set is a pseudomanifold. It has very simple singularities
(metrically conical). However, the results of this paper show that
if they were not conical (say cuspidal), this would not affect the
cohomology groups which only depend on the topology of the
underlying singular space.  This simple example is already enough to illustrate  how
the singularities affect Poincar\'e duality for $L^1$ cohomology.

The different cohomology groups
considered in this paper are gathered in the table below.

 \begin{center}
\begin{tabular}{|l|c|c|c|c|}
  \hline 

  {Cohomology groups}{$\qquad\qquad\quad j=$}
 & $0$& $1$& $2$ & $3$  \\
 \hline $ I^t H^j (X)$ and $ H_\infty ^j (X_{reg})$ & $\R$ & $0$  & $\R^2$  & $\R$     \\
  \hline $ I^0 H^j (X)$ and $ H_{(1)} ^j (X_{reg};X_{sing})$   & $\R$  &$\R^2$  & $0$   &$\R$   \\
  \hline  $H^j(X_{reg})$ and $H_{(1)}^j (X_{reg})$& $\R$   & $\R^2$  & $\R$  & $0$    \\
  \hline
\end{tabular}
\end{center}
\medskip

All these results may be obtained from the isomorphisms given in
the introduction and a triangulation. Below, we interpret them geometrically.

Let $T \subset X$ be the original torus  and let $\sigma$ and
$\tau$ be the suspension of the (support of the) two generators of
$H_1 (T)$. Write $\sigma^\ep :=\{x \in |\sigma|: d(x;\{x_0,x_1\})
\leq \ep \} $.

If $\omega$ is an $L^\infty$ $2$-form zero near the singular points
and satisfying
\begin{equation}\label{eq_ex_sigma}\int_\sigma \omega=1,\end{equation} and if $\omega=d
\alpha$ then $\int_{\sigma^\ep} \alpha\equiv 1$ (by Stokes' formula). As
the volume of $\sigma^\ep$ tends to zero, $\alpha$ cannot be
bounded. Consequently if $\omega$ is a $L^\infty$ closed $2$-form
zero near the singularities satisfying (\ref{eq_ex_sigma}), it
must represent a nontrivial class. In fact, every nontrivial class
may be represented by a shadow form \cite{bgm}.

However, the form $\alpha$ may be $L^1$. The only nontrivial $L^1$ class of
$2$ forms is actually provided by those forms whose integral  on $T$ is
nonzero, but these forms obviously do not satisfy the $L^1$
Stokes property (see (\ref{eq_l1_rho})). We see that the
singularities induce a gap between $L^1$ and Dirichlet $L^1$
cohomology, making the $L^1$ Stokes' property fail.

We also see that $L^\infty$ cohomology is dual to $L^1$ cohomology
in dimension $2$ and $3$ (as it is established by Theorem
\ref{cor_poincare_duality_intro}).
 However, $H_{\infty} ^1
(X_{reg})$ is not isomorphic to $H_{(1)}^{2} (X_{reg})$.


\begin{thebibliography}{mmm}
\bibitem [BCR]{bcr} J. Bochnak, M.Coste and M.-F.Roy, Real Algebraic Geometry. Ergebnisse der Math. 36, Springer-Verlag(1998).
\bibitem[B]{bredon} G.  E. Bredon, Sheaf theory. Second edition. Graduate Texts in Mathematics, 170. Springer-Verlag, New York, 1997. xii+502 pp.
\bibitem[BT]{bl}R. Bott,  Tu W. Loring , Differential forms in algebraic topology. Graduate Texts in Mathematics, 82. Springer-Verlag, New York-Berlin, 1982.
\bibitem[C1]{c1}J. Cheeger,  Hodge theory of complex cones.  Analysis and topology on singular spaces, II, III (Luminy, 1981),  118--134, Ast\'erisque, 101-102, Soc. Math. France, Paris, 1983.
\bibitem[C2]{c2} J. Cheeger, On the spectral geometry of spaces with cone-like singularities.  Proc. Nat. Acad. Sci. U.S.A.  76  (1979), no. 5, 2103--2106.
\bibitem[C3]{c3}J.  Cheeger, On the Hodge theory of Riemannian
   pseudomanifolds. Geometry of the Laplace operator (Proc. Sympos. Pure Math.,
   Univ. Hawaii, Honolulu, Hawaii, 1979), pp. 91--146, Proc. Sympos. Pure
   Math., XXXVI, Amer. Math. Soc., Providence, R.I., 1980.
 \bibitem[C4]{c4}J. Cheeger,  Hodge theory of complex cones.  Analysis and topology on singular spaces, II, III (Luminy, 1981),  118--134, Ast\'erisque, 101-102, Soc. Math. France, Paris, 1983.
\bibitem[C5]{c5} J.  Cheeger, Spectral geometry of singular Riemannian spaces.
/J. Differential Geom. (1983), 575--657 (1984).
\bibitem[BGM]{bgm} J.-P. Brasselet, M. Goresky, R. MacPherson,
Simplicial differential forms with poles.
Amer. J. Math. 113 (1991), no. 6, 1019--1052.
\bibitem[CGM]{cgm}J. Cheeger, M. Goresky, R. MacPherson,  $L\sp{2}$-cohomology and intersection homology of singular algebraic varieties.
\bibitem[vDS]{ds} L. van den Dries, P. Speissegger.
 P. O-minimal preparation theorems.  Model theory and applications,  87--116, Quad. Mat., 11, Aracne, Rome, 2002.
  \bibitem[DS]{tesc}  	Z. Denkowska, J. Stasica, Ensembles sous-analytiques \`a la Polonaise, Editions Hermann, Paris 2007.
 \bibitem[D]{d} J.  Dodziuk,
Sobolev spaces of differential forms and de Rham Hodge isomorphism.
J. Differential Geom. 16 (1981), no. 1, 63--73.
\bibitem[GM1]{ih1}M. Goresky, R.   MacPherson, Intersection homology theory.  Topology  19  (1980), no. 2, 135--162.
\bibitem[GM2]{ih2}M. Goresky, R.   MacPherson, Intersection homology. II.  Invent. Math.  72  (1983), no. 1, 77--129.
\bibitem[HP]{hp} W. C. Hsiang, V. Pati,
$L\sp 2$-cohomology of normal algebraic surfaces. I.
Invent. Math. 81 (1985), no. 3, 395--412.
\bibitem[IM]{iw} T. Iwaniec, G. Martin,
Geometric function theory and non-linear analysis.
Oxford Mathematical Monographs. The Clarendon Press, Oxford University Press, New York, 2001.
\bibitem[MT]{mt} C. Murolo, D. Trotman, Horizontally-$C^1$ controlled stratified maps and Thom's first isotopy theorem.  C. R. Acad. Sci. Paris Sér. I Math.  330  (2000),  no. 8, 707--712.
\bibitem[L2]{l}S. \L ojasiewicz,
Th\'eor\`eme de Paw\l ucki. La formule de Stokes sous-analytique.
Geometry Seminars, 1988--1991 (Italian) (Bologna, 1988--1991),
79--82, Univ. Stud. Bologna, Bologna, 1991.
\bibitem[P]{p}A. Parusi\'nski, Lipschitz stratification of subanalytic sets.
 Ann. Sci. Ecole Norm. Sup. (4) 27 (1994), no. 6, 661--696.
\bibitem[S1]{s1}L. Saper,  $L_ 2$-cohomology and intersection homology of certain algebraic varieties with isolated singularities.  Invent. Math.  82  (1985),  no. 2, 207--255.
\bibitem[S2]{s2}L. Saper,
$L\sb 2$-cohomology of K\"ahler varieties with isolated singularities.
J. Differential Geom. 36 (1992), no. 1, 89--161.
\bibitem[SV]{sv} L. Shartser, G. Valette, De Rham theorem for $L^\infty$ cohomology on singular spaces, preprint.
\bibitem[V1]{vlt} G. Valette, Lipschitz triangulations. Illinois J. Math. 49 (2005), issue 3,
953--979.
\bibitem[V2]{vpams}G. Valette, , The link of the germ of a semi-algebraic metric space  Proc. Amer. Math. Soc. 135 (2007), 3083--3090.
\bibitem[V3]{vlinfty} G. Valette, $L^\infty$ cohomology is
intersection cohomology, prerpint.
 \bibitem[Wa]{w}F. Warner, Foundations of differentiable manifolds and Lie groups. Corrected reprint of the 1971 edition. Graduate Texts in Mathematics, 94. Springer-Verlag, New York-Berlin, 1983.
\bibitem[We]{weber} A. Weber, An isomorphism from intersection homology to $L_p$-cohomology.  Forum Math.  7  (1995),  no. 4, 489--512.
\bibitem[Wh]{wh} H. Whitney,  Geometric integration theory. Princeton University Press, Princeton, N. J., 1957. \bibitem[Y]{y} B. Youssin,  $L\sp p$ cohomology of cones and horns.  J. Differential Geom.  39  (1994),  no. 3, 559--603.
\end{thebibliography}
\end{document}